\renewcommand{\Im}{{\rm{Im}}}
\renewcommand{\Re}{{\rm{Re}}}
\DeclareMathOperator{\dd}{\rm{d}}
\DeclareMathOperator{\ii}{\rm{i}}
\numberwithin{equation}{section}
\theoremstyle{plain}
\newtheorem{thm}{Theorem}[section]
\newtheorem{cor}[thm]{Corollary}
\theoremstyle{remark}
\newtheorem{rem}[thm]{Remark}
\newtheorem{assm}[thm]{Assumption}
\newtheorem{defn}[thm]{Definition}
\newtheorem{lem}[thm]{Lemma}
\begin{document}

\begin{frontmatter}
\title{Convergence of eigenvector empirical spectral distribution of sample covariance matrices}

\runtitle{}

\begin{aug}

\author{\fnms{Haokai} \snm{Xi} \thanksref{m1}\ead[label=e1]{haokai@math.wisc.edu}},
\author{\fnms{Fan} \snm{Yang} \thanksref{m1,m2}\ead[label=e2]{fyang75@math.ucla.edu}}
\and
\author{\fnms{Jun}  \snm{Yin} \thanksref{m1,m2,T1} \ead[label=e3]{jyin@math.ucla.edu}}

\thankstext{T1}{Supported by NSF Career Grant DMS-1552192 and Sloan fellowship.}

\runauthor{H. Xi, F. Yang and J. Yin}

\affiliation{University of Wisconsin-Madison\thanksmark{m1} and University of California, Los Angeles\thanksmark{m2}}


\address{Department of Mathematics\\ 
University of California, Los Angeles, \\
Los Angeles, CA 90095, \\
USA\\
\printead{e2}\\
\phantom{E-mail:\ }\printead*{e3}
}

\address{Department of Mathematics \\
University of Wisconsin-Madison\\
Madison, WI 53706 \\
USA\\
\printead{e1}\\
}
\end{aug}


\begin{abstract}
The {\it{eigenvector empirical spectral distribution}} (VESD) is a useful tool in studying the limiting behavior of eigenvalues and eigenvectors of covariance matrices. In this paper, we study the convergence rate of the VESD of sample covariance matrices to the deformed Mar{\v c}enko-Pastur (MP) distribution. Consider sample covariance matrices of the form $\Sigma^{1/2} X X^* \Sigma^{1/2}$, where $X=(x_{ij})$ is an $M\times N$ random matrix whose entries are independent  random variables with mean zero and variance $N^{-1}$, and  $\Sigma$ is a deterministic positive-definite matrix. We prove that the Kolmogorov distance between the {\it{expected VESD}} and the deformed MP distribution is bounded by $N^{-1+\epsilon}$ for any fixed $\epsilon>0$, provided that the entries $\sqrt{N}x_{ij}$ have uniformly bounded 6th moments and $|N/M-1|\ge \tau$ for some constant $\tau>0$. This result improves the previous one obtained in \cite{XYZ2013}, which gave the convergence rate $O(N^{-1/2})$ assuming $i.i.d.$ $X$ entries, bounded 10th moment, $\Sigma=I$ and $M<N$. Moreover, we also prove that under the finite $8$th moment assumption, the convergence rate of the VESD is $O(N^{-1/2+\epsilon})$ almost surely for any fixed $\epsilon>0$, 
 which improves the previous bound $N^{-1/4+\epsilon}$ in \cite{XYZ2013}.  
\end{abstract}

\begin{keyword}[class=MSC]
\kwd[Primary ]{15B52}
\kwd{62E20}
\kwd[; secondary ]{62H99}
\end{keyword}

\begin{keyword}
\kwd{Sample covariance matrix}
\kwd{Empirical spectral distribution}
\kwd{Eigenvector empirical spectral distribution}
\kwd{Mar{\v c}enko-Pastur distribution}
\end{keyword}

\end{frontmatter}

\section{Introduction and main results}\label{Sec_intro}

Sample covariance matrices are fundamental objects in multivariate statistics. The population covariance matrix of a centered random vector $\mathbf y\in \mathbb R^M$ is $\Sigma=\mathbb E \mathbf y \mathbf y^*$. Given $N$ independent samples $(\mathbf y_1, \cdots, \mathbf y_N)$ of $\mathbf y$, the sample covariance matrix $Q := N^{-1}\sum_i \mathbf y_i \mathbf y_i^*$ is the simplest estimator for $\Sigma$. In fact, if $M$ is fixed, then $Q$ converges almost surely to $\Sigma$ as $N\to \infty$. However, in many modern applications, the advance of technology has led to high dimensional data where $M$ is comparable to or even larger than $N$. In this setting, $\Sigma$ cannot be estimated through $Q$ directly, but some properties of $\Sigma$ can be inferred from the eigenvalue and eigenvector statistics of $Q$. The large dimensional covariance matrices have more and more applications in various fields, such as statistics \cite{DT2011,IJ,IJ2,IJ2008}, economics \cite{Economics} and population genetics \cite{Genetics}. 

In this paper, we consider sample covariance matrices of the form $Q_1:=\Sigma^{1/2} X X^* \Sigma^{1/2}$, where $X=(x_{ij})$ is an $M\times N$ real or complex data matrix whose entries are independent (but not necessarily identically distributed) random variables satisfying
\begin{align}
& \mathbb{E} x_{ij} =0, \ \ \mathbb{E} | x_{ij} |^2  = N^{-1}, \ \ 1\le i \le M, \ 1\le j \le N, \label{entry_assm}
\end{align}
and the population covariance matrix $ \Sigma:=\text{diag}(\sigma_1,\sigma_2,\ldots, \sigma_M)$, {$\sigma_1\ge \cdots \ge \sigma_M \ge 0$,} is a deterministic positive-definite matrix. If the entries of $X$ are complex, then we assume in addition that 
\begin{equation}
\mathbb Ex_{ij}^2 = 0, \ \ 1\le i \le M, \ 1\le j \le N. \label{entry_assm2}
\end{equation}
Define the aspect ratio $d_N:={N}/{M}.$ We are interested in the high dimensional case with $\lim_{N \rightarrow \infty} d_N = d \in (0,\infty)$. 
We will also consider the $N \times N$ matrix $Q_2:=X^* \Sigma X$, which share the same nonzero eigenvalues with $Q_1$.

A simple but important example is the sample covariance matrix with $\Sigma=\sigma^2 I$ (i.e. the null case). In applications of spectral analysis of large dimensional random matrices, one important problem is the convergence rate of the empirical spectral distributions (ESD). It is well-known that the ESD $F^{(M)}_{XX^*}$ of $XX^*$ converges weakly to the Mar{\v c}enko-Pastur (MP) law $F_{MP}$ \cite{MP}.
One way to measure the convergence rate of the ESD is to use the Kolmogorov distance
$$\| F^{(M)}_{XX^*} - F_{MP}\| :=\sup_{x}|F^{(M)}_{XX^*}(x) - F_{MP}(x)|.$$
The convergence rate for sample covariance matrices was first established in \cite{bai1993_2}, 
and later improved in \cite{GT2004} to $O(N^{-1/2})$ in probability under the finite 8th moment condition. In \cite{PY}, the authors proved an almost optimal bound that $\|F^{(M)}_{XX^*} - F_{MP}\|=O(N^{-1+\epsilon})$ with high probability for any fixed $\epsilon>0$ under the sub-exponential decay assumption.

The research on the asymptotic properties of eigenvectors of large dimensional random matrices is generally harder and much less developed. However, the eigenvectors play an important role in high dimensional statistics. In particular, the principal component analysis (PCA) is now favorably recognized as a powerful technique for dimensionality reduction, and the eigenvectors corresponding to the largest eigenvalues are the directions of the principal components. 
The earlier work on the properties of eigenvectors goes back to Anderson \cite{Anderson}, where the author proved that the eigenvectors of the Wishart matrix are asymptotically normal and isotropic when $M$ is fixed and $N\to\infty$. For the high dimensional case, Johnstone \cite{IJ2} proposed the spiked model to test the existence of principal components. Then Paul \cite{Paul2007} studied the directions of eigenvectors corresponding to spiked eigenvalues. In \cite{Ma2013}, Ma proposed an iterative thresholding approach to estimate sparse principal subspaces in the setting of a high-dimensional spiked covariance model. Using a reduction scheme which reduces the sparse PCA problem to a high-dimensional multivariate regression problem, \cite{Cai2013} established the optimal rates of convergence for estimating the principal subspace for a large class of spiked covariance matrices. One can see the references in \cite{Cai2013,Ma2013} for more literatures on sparse PCA and spiked covariance matrices. 

For the test of the existence of spiked eigenvalues, we first need to study the properties of the eigenmatrices in the null case. If $\Sigma=\sigma^2 I$, then the eigenmatrix is expected to be asymptotically Haar distributed (i.e. uniformly distributed over the unitary group). However, formulating the terminology ``asymptotically Haar distributed" is far from trivial since the dimension $M$ is increasing. Following the approach in \cite{JS1989,JS1990,Bai2007,XZ2016,XYZ2013}, we will use the {\it{eigenvector empirical spectral distribution}} (VESD) to characterize the asymptotical Haar property. 
Suppose
\begin{equation}\label{SVD_X}
\Sigma^{1/2}X = \sum\limits_{1 \le k \le {N\wedge M} }{\sqrt {\lambda_k} \xi_k } \zeta_k^*
\end{equation}
is a singular value decomposition of $\Sigma^{1/2}X$, where
$$\lambda_1\ge \lambda_2 \ge \ldots \ge \lambda_{N\wedge M} \ge 0 = \lambda_{N\wedge M+1} = \ldots = \lambda_{N\vee M},$$
$\{\xi_{k}\}_{k=1}^{M}$ are the left-singular vectors, and $\{\zeta_k\}_{k=1}^{N}$ are the right-singular vectors.
Then for deterministic unit vectors $\mathbf u\in\mathbb C^M$ and $\mathbf v\in\mathbb C^N$, we define the VESD of $Q_{1,2}$ as
\begin{equation}\label{defn_VESD}
F^{(M)}_{Q_1,\mathbf u}(x) = \sum_{k=1}^M |\langle \xi_k,\mathbf u\rangle|^2 \mathbf{1}_{\{\lambda_k \leq x\}}, \ \ F^{(N)}_{Q_2,\mathbf v}(x) = \sum_{k=1}^N |\langle \zeta_k,\mathbf v\rangle|^2 \mathbf{1}_{\{\lambda_k \leq x\}}.
\end{equation}

Now we apply the above formulations to the null case. Adopting the ideas of \cite{JS1989,JS1990}, we define the stochastic process as
$$X_{M,\mathbf u}(t):=\sqrt{\frac{M}{2}}\sum_{k=1}^{\lfloor Mt \rfloor}\left( |\langle \xi_k,\mathbf u\rangle|^2 - M^{-1}\right).$$
If the eigenmatrix of $XX^*$ is Haar distributed, then the vector $\mathbf y:=(\langle \xi_k,\mathbf u\rangle)_{k=1}^M$ is uniformly distributed over the unit sphere, and $X_{M,\mathbf u}(t)$ would converge to a Brownian bridge by Donsker's theorem. Thus the convergence of $X_{M,\mathbf u}$ to a Brownian bridge characterizes the asymptotical Haar property of the eigenmatrix. For convenience, we can consider the time transformation 
$$X_{M,\mathbf u}(F^{(M)}_{XX^*}(x))= \sqrt{\frac{M}{2}}\left(F^{(M)}_{XX^*,\mathbf u}(x) - F^{(M)}_{XX^*}(x)\right).$$
Thus the problem is reduced to the study of the difference between the VESD and the ESD. It was already proved in \cite{Bai2007,isotropic} that $F^{(M)}_{XX^*,\mathbf u}$ also converges weakly to the 
MP law for any sequence of unit vectors $\mathbf u \in \mathbb R^M$. On the other hand, compared with ESD, much less has been known about the convergence rate of the VESD. The best result so far was obtained in \cite{XYZ2013}, where the authors proved that if $d_N>1$ and the entries of $X$ are $i.i.d.$ centered random variables, then $\|\mathbb E F^{(M)}_{XX^*,\mathbf u}-F_{MP}\|=O(N^{-1/2})$ under the finite 10th moment assumption, and $\|F^{(M)}_{XX^*,\mathbf u}-F_{MP}\|=O(N^{-1/4+\epsilon})$ almost surely under the finite 8th moment assumption. 
However, we find that both of these bounds are far away from being optimal, and can be improved with a different method. This is one of the purposes of this paper. 

We will also extend the above formulation to include sample covariance matrices with general population $\Sigma$. For a non-scalar $\Sigma$, the eigenmatrix of $Q_1$ is not asymptotically Haar distributed anymore. For its distribution, we conjecture that the eigenvectors of $Q_1$ are asymptotically independent, and each $\xi_k$ is asymptotically normal with covariance matrix given by some $\mathbf D_k$. In fact, our results in this paper suggest that $\mathbf D_k$ takes the form $\mathbf F_{1c}(\gamma_k)-\mathbf F_{1c}(\gamma_{k+1})$, where $\gamma_k$ is defined in (\ref{gammaj}) to denote the classical location for $\lambda_k$, and $\mathbf F_{1c}$ is a matrix-valued function defined in (\ref{F1cv}) with the property that $\langle \mathbf u,\mathbf F_{1c}\mathbf u\rangle$ is the asymptotic distribution of the VESD $F_{Q_1,\mathbf u}$ for any $\mathbf u \in \mathbb C^M$. Again, since the dimension $M$ increases to infinity, the above property is hard to formulate. One way is to consider the finite-dimensional restriction in the following sense:
given $m\in \mathbb N$, for any fixed unit vector $\mathbf u \in \mathbb C^M$ and $\{i_1,\cdots,i_m\}\subseteq \{1,\cdots, N\wedge M\}$, we should have asymptotically
\begin{equation}\label{sympQ1}
\left( \langle \xi_{i_1},  \mathbf u\rangle, \cdots, \langle \xi_{i_m}, \mathbf u\rangle \right) \sim \mathcal N_m\left(0, \langle \mathbf u,\mathbf D_{i_1}\mathbf u\rangle , \ldots, \langle \mathbf u,\mathbf D_{i_m}\mathbf u\rangle \right).
\end{equation}
(In fact, for a nice choice of $\Sigma$ in the sense of Definition \ref{def_regular}, $\langle \mathbf u,\mathbf D_{k}\mathbf u\rangle$ is typically of order $N^{-1}$.) We can also adopt the approach as above, that is to investigate the stochastic process
\begin{equation}\label{Xsigma}
X^\Sigma_{M,\mathbf u}(t):=\sqrt{\frac{M}{2}}\sum_{k=1}^{\lfloor Mt \rfloor}\left( |\langle \xi_k,\mathbf u\rangle|^2 - \langle \mathbf u,\mathbf D_{k}\mathbf u\rangle\right).
\end{equation}
If $M<N$, we conjecture that $X^\Sigma_{M,\mathbf u}(t)$ converges to the following Gaussian process for $0\le t \le 1$:
\begin{equation}\label{Bsigma}
{\mathbf B^\Sigma_{\mathbf u}(t)} := \int_0^t \langle \mathbf u,\mathbf F_{1c}\mathbf u \rangle \circ F_{1c}^{-1} \dd B_t \quad \text{conditioning on} \quad \mathbf B^\Sigma_{\mathbf u}(1)=0,
\end{equation}
where $B_t$ is a standard Brownian motion, $F_{1c}$ is the asymptotic ESD of $Q_1$ defined in (\ref{F1c}), and $F_{1c}^{-1}$ denotes the quantile function. As before, we can study the process (\ref{Xsigma}) through the time transformaton $X^\Sigma_{M,\mathbf u}(F_{Q_1}(x))$, where $F_{Q_1}$ is the ESD of $Q_1$. Due to the rigidity of eigenvalues (see Theorem \ref{thm_largerigidity}), we have for all $x$,
$$\sqrt{\frac{2}{M}}X^\Sigma_{M,\mathbf u}(F_{Q_1}(x))=F_{Q_1,\mathbf u}(x) - \langle \mathbf u,\mathbf F_{1c}(x)\mathbf u\rangle+O(N^{-1+\epsilon})$$ 
with very high proability for any fixed $\epsilon>0$.  
Thus we need to study the convergence rate of $F_{Q_1,\mathbf u}$ to $\langle \mathbf u,\mathbf F_{1c}\mathbf u\rangle$, and this is our main goal. 
In fact, we will prove that the convergence rate of $\mathbb EF_{Q_1,\mathbf u}$ is $O(N^{-1+\epsilon})$ for any fixed $\epsilon>0$, which shows that the limiting process is centered, and the convergence rate of $F_{Q_1,\mathbf u}$ is $O(N^{-1/2+\epsilon})$, which partially verify the $\sqrt{M}$ scaling.


\subsection{Main results}
We consider sample covariance matrices with a general diagonal $\Sigma$, 
whose empirical spectral distribution is denoted by
\begin{equation}\label{sigma_ESD}
\pi\equiv \pi_M := M^{-1} \sum_{1\le i\le M} \delta_{\sigma_i}.
\end{equation}
We assume that there exists a small constant $\tau>0$ such that 
\begin{equation}\label{assm3}
\sigma_1 \le \tau^{-1} \ \ \text{ and } \ \ \pi_M([0,\tau]) \le 1 - \tau \ \ \text{for all } M.
\end{equation}
The first condition means that the operator norm of $\Sigma$ is bounded, and the second condition means that the spectrum of $\Sigma$ cannot concentrate at zero. If $\pi_M$ converges weakly to some distribution $\hat \pi$ as $M\to \infty$, then it was shown in \cite{MP} that the ESD of $Q_{2}$ converges in probability to some deterministic distribution, which is called the {\it{deformed}} Mar{\v c}enko-Pastur law. For any $N$, we describe the deformed MP law $F_{2c}^{(N)}$ through its Stieltjes transform
$$m_{2c}(z)\equiv m^{(N)}_{2c}(z):=\int_{\mathbb R} \frac{\dd F^{(N)}_{2c}(x)}{x-z}, \ \ z = E+ i\eta \in \mathbb C_+.$$
We define $m_{2c}$ as the unique solution to the self-consistent equation
\begin{equation}\label{deformed_MP21}
\frac{1}{m_{2c}(z)} = - z + {d_N^{-1}}\int\frac{t}{1+m_{2c}(z) t} \pi(\dd t), 
\end{equation}
subject to the conditions that $\Im \, m_{2c}(z) \ge 0$ and $\Im \, zm_{2c}(z) \ge 0$ for $z\in \mathbb C_+$. It is well known that the functional equation (\ref{deformed_MP21}) has a unique solution that is uniformly bounded on $\mathbb C_+$ under the assumption (\ref{assm3}) \cite{MP}. Letting $\eta \downarrow 0$, we can recover the asymptotic eigenvalue density $\rho_{2c}$ (which further gives $F_{2c}^{(N)}$) with the inverse formula
\begin{equation}\label{ST_inverse}
\rho_{2c}(E) = {\pi}^{-1}\lim_{\eta\downarrow 0} \Im\, m_{2c}(E+\ii\eta).
\end{equation}
Since $Q_1$ share the same nonzero eigenvalues with $Q_2$ and has $M-N$ more (or $N-M$ less) zero eigenvalues, we can obtain the asymptotic ESD for $Q_1$:
\begin{equation}
F^{(M)}_{1c}={d_N} {F}^{(N)}_{2c}+(1-{d_N})\mathbf{1}_{[0,\infty)}. \label{F1c}
\end{equation}
In the rest of this paper, we will often omit the super-indices $N$ and $M$ from our notations. The properties of $m_{2c}$ and $\rho_{2c}$ have been studied extensively; see e.g. \cite{Bai1998,Bai2006,BPZ,HHN,Anisotropic,Silverstein1995,SC}. The following Lemma \ref{Structure_lem} describes the basic structure of $\rho_{2c}$. For its proof, one can refer to \cite[Appendix A]{Anisotropic}. 
\begin{lem}[Support of the deformed MP law]\label{Structure_lem}
The density $\rho_{2c}$ is a disjoint union of connected components:
\begin{equation}\label{support_rho1c}
{\rm{supp}} \, \rho_{2c} \cap (0,\infty) = \bigcup_{ k=1}^L [a_{2k}, a_{2k-1}] \cap (0,\infty),
\end{equation}
where $L\in \mathbb N$ depends only on $\pi_M$. Moreover, $N\int_{a_{2k}}^{a_{2k-1}} \rho_{2c}(x)dx$ is an integer for any $k=1,\ldots, L$, which give the classical number of eigenvalues in the bulk component $[a_{2k},a_{2k-1}]$.
\end{lem}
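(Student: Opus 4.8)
The plan is to reduce everything to an analysis of the functional inverse of $m_{2c}$ on the real axis. Rearranging (\ref{deformed_MP21}) gives $z = f(m_{2c}(z))$ on $\mathbb C_+$, where
$$f(m) \;:=\; -\frac1m + \frac1{d_N}\int \frac{t}{1+mt}\,\pi(\dd t)$$
is real analytic on $\mathbb R\setminus\big(\{0\}\cup\{-\sigma_i^{-1}: \sigma_i>0\}\big)$. The first step is the standard Silverstein--Choi characterization of the support (see \cite{SC, Anisotropic}): $m_{2c}$ extends analytically from $\mathbb C_+$ to the connected set $\mathbb C\setminus{\rm supp}\,\rho_{2c}$, it is injective there with inverse $f$, and a point $E\in(0,\infty)$ lies outside ${\rm supp}\,\rho_{2c}$ if and only if there is a real $m$, not a pole of $f$, with $f(m)=E$ and $f'(m)>0$. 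Thus ${\rm supp}\,\rho_{2c}\cap(0,\infty)$ is the complement in $(0,\infty)$ of $f\big(\{m\neq0:\,f'(m)>0\}\big)$.

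Next I would show that $\{m:f'(m)>0\}$ is a finite union of open intervals. Writing $\pi_M$ as a sum of atoms at the $r\,(\le M)$ distinct values of the $\sigma_i$, clearing the denominator $m^2\prod_i(1+m\sigma_{(i)})^2$ turns $f'(m)=m^{-2}-d_N^{-1}\int t^2(1+mt)^{-2}\pi(\dd t)=0$ into a polynomial equation of degree $O(r)$, so $f'$ has finitely many zeros; together with the elementary sign analysis of $f$ near each of its poles (where $f\to\pm\infty$ with signs dictated by the adjacent atom), this bounds the number of intervals of $\{f'>0\}$ in terms of $r$ alone. The bounded ones are carried homeomorphically by $f$ onto the bounded spectral gaps, whose endpoints are critical values of $f$; this yields (\ref{support_rho1c}) with $L$ depending only on the atom structure of $\pi_M$.

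It then remains to prove the integrality of $N\int_{a_{2k}}^{a_{2k-1}}\rho_{2c}$. Fix a positively oriented contour $\Gamma\subset\mathbb C\setminus{\rm supp}\,\rho_{2c}$ enclosing $[a_{2k},a_{2k-1}]$ and no other point of ${\rm supp}\,\rho_{2c}$. By Stieltjes inversion, $\frac1{2\pi\ii}\oint_\Gamma m_{2c}(z)\,\dd z$ equals the mass that $\rho_{2c}(x)\,\dd x$ assigns to $[a_{2k},a_{2k-1}]$, plus the atom $\max(N-M,0)/N$ of $F_{2c}$ at $0$ if $0$ is enclosed. Since $z=f(m)$ on $\mathbb C\setminus{\rm supp}\,\rho_{2c}$, substituting $m=m_{2c}(z)$ converts this into $\frac1{2\pi\ii}\oint_{\gamma} N m\,f'(m)\,\dd m$ over the closed curve $\gamma:=m_{2c}(\Gamma)$, and a direct computation gives
$$N m\,f'(m) \;=\; \frac{N}{m} - \sum_{i=1}^M\frac{m\sigma_i^2}{(1+m\sigma_i)^2},$$
a meromorphic function on $\mathbb C$ whose only poles are at $m=0$ (residue $N$) and at $m=-\sigma_i^{-1}$ (residue $-1$ for each $i$, counted with multiplicity). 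By the residue theorem the integral equals $N$ times the winding number of $\gamma$ about $0$ minus the winding numbers of $\gamma$ about the points $-\sigma_i^{-1}$, hence an integer; subtracting the integer $\max(N-M,0)$ when $0\in[a_{2k},a_{2k-1}]$ gives $N\int_{a_{2k}}^{a_{2k-1}}\rho_{2c}\in\mathbb Z$.

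I expect the first step to be the main obstacle: carefully establishing that outside the support $m_{2c}$ is real with $f'(m_{2c})>0$ (and the converse), together with the behaviour of $f$ at its poles needed to count the components. Once that structural picture is in place, finiteness of $L$ and the residue computation for the mass are essentially mechanical; the only care needed is for the spectral edges possibly touching $0$ and for repeated values among the $\sigma_i$, which raise the order of the poles of $f$ but not the integrality of the residues.
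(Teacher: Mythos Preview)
Your proposal is correct and follows the standard route; the paper itself does not prove this lemma but simply refers to \cite[Appendix A]{Anisotropic}, where essentially the same argument (inverse function $f$, Silverstein--Choi characterization of the support via $f'>0$, and a contour/residue computation for the mass of each component) is carried out. The only cosmetic point is a sign in the Stieltjes inversion step, which of course does not affect integrality.
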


We shall call $a_k$ the edges of $\rho_{2c}$. For any $1\le k\le 2L$, we define
\begin{equation}\label{Nk}
N_k := \sum_{l:2l\le k} N\int_{a_{2l}}^{a_{2l-1}} \rho_{2c}(x)\dd x.
\end{equation}
Then we define the classical locations $\gamma_j$ for the eigenvalues of $\mathcal Q_2$ through
\begin{equation}\label{gammaj}
 1 - F_{2c}(\gamma_j) = \frac{j-1/2}{N},  \ \ 1\le j \le K,
\end{equation}
where we abbreviate $K:=M\wedge N$. Note that (\ref{gammaj}) is well-defined since the $N_k$'s are integers. For convenience, we also denote $\gamma_0:=+\infty$ and $\gamma_{K+1}:=0$.

To establish our main result, we need to make some extra assumptions on $\Sigma$ and $\pi_M$, which takes the form of the following regularity conditions.

\begin{defn}[Regularity]\label{def_regular}
(i) Fix a (small) constant $\tau>0$. We say that the edge $a_k$, $k=1, \ldots, 2L$, is $\tau$-regular if
\begin{equation}
a_k\ge \tau, \quad \min_{l\ne k} |a_k - a_l| \ge \tau, \quad \min_{i}|1+m_{2c}(a_k)\sigma_i| \ge \tau , \label{regular1}
\end{equation}
where $m_{2c}(a_k):= m_{2c}(a_k + \ii 0_+)$.

(ii) We say that the bulk components $[a_{2k}, a_{2k-1}]$ is regular if for any fixed $\tau'>0$ there exists a constant $c\equiv c_{\tau'}>0$ such that the density of $\rho_{2c}$ in $[a_{2k}+\tau', a_{2k-1}-\tau']$ is bounded from below by $c$.
\end{defn}

\begin{rem} The edge regularity conditions (i) has previously appeared (in slightly different forms) in several works on sample covariance matrices \cite{BPZ1, Karoui,HHN, Anisotropic,LS,Regularity4}. The condition (\ref{regular1}) ensures a regular square-root behavior of $\rho_{2c}$ near $a_k$. 
The bulk regularity condition (ii) was introduced in \cite{Anisotropic}, and it imposes a lower bound on the density of eigenvalues away from the edges. These conditions are satisfied by quite general classes of $\Sigma$; see e.g. \cite[Examples 2.8 and 2.9]{Anisotropic}.
\end{rem}

For any $\mathbf u \in \mathbb C^M$ and $z\in \mathbb C_+$, we define
\begin{equation}\label{m1cv}
m_{1c,\mathbf u}(z):=- \langle \mathbf u, z^{-1}(1+m_{2c}(z)\Sigma)^{-1}\mathbf u\rangle . 
\end{equation}
Then $m_{1c,\mathbf u}$ is the Stieltjes transform of a distribution, which we shall denote by $F_{1c,\mathbf u}$. From (\ref{m1cv}), it is easy to see that there exists a matrix-valued function $\mathbf F_{1c}$ depending on $\Sigma$ such that 
$F_{1c,\mathbf u}=\langle \mathbf u,\mathbf F_{1c}\mathbf u\rangle$, i.e., we have
\begin{equation}\label{F1cv}
m_{1c,\mathbf u}(z) = \int_{\mathbb R} \frac{\dd F_{1c,\mathbf u}(x)}{x-z}=  \langle \mathbf u, \int_{\mathbb R} \frac{\dd\mathbf F_{1c}(x)}{x-z}\mathbf u\rangle.
\end{equation}
It was already proved in \cite{Anisotropic} that for any sequence of unit vectors $\mathbf u\in \mathbb C^M$ and $\mathbf v\in \mathbb C^N$, $F^{(M)}_{Q_1,\mathbf u}$ converges weakly to $F_{1c,\mathbf u}$ and $F^{(N)}_{Q_2,\mathbf v}(x)$ converges weakly to $F_{2c}$. Now we are ready to state our main results, i.e. Theorem \ref{main_thm}. 
We first give the main assumptions.


\begin{assm}\label{main_assm}
Fix a (small) constant $\tau>0$.

(i) $X=(x_{ij})$ is an $M\times N$ real or complex matrix whose entries are independent random variables that satisfy the following moment conditions: there exist constants $C_0,c_0>0$ such that for all $1\le i \le M$, $1\le j \le N$,
\begin{align}
\left|\mathbb{E} x_{ij}\right|  & \le C_0 N^{-2-c_0},\label{entry_assm0} \\
\left|\mathbb{E} | x_{ij} |^2  - N^{-1}\right|  & \le C_0N^{-2-c_0}, \label{entry_assm1}\\
\left| \mathbb Ex_{ij}^2\right| &\le C_0 N^{-2-c_0}, \ \ \text{if $x_{ij}$ is complex}, \label{entry_assmex}\\
\mathbb{E} | x_{ij} |^4 & \le C_0 N^{-2}. \label{conditionA3} 
\end{align}
Note that (\ref{entry_assm0})-(\ref{entry_assmex}) are slightly more general than (\ref{entry_assm}) and (\ref{entry_assm2}).

(ii) $\tau \le d_N \le \tau^{-1}$ and $|d_N - 1|\ge \tau$.

(iii) $\Sigma=\text{diag}(\sigma_1,\sigma_2,\ldots, \sigma_M)$ is a deterministic positive-definite matrix. We assume that (\ref{assm3}) holds, all the edges of $\rho_{2c}$ are $\tau$-regular, and all the bulk components of $\rho_{2c}$ are regular in the sense of Definition \ref{def_regular}.
\end{assm}


\begin{thm}\label{main_thm}
Suppose $d_N$, $X$ and $\Sigma$ satisfy the Assumption \ref{main_assm}. Suppose there exist constants $C_1,\phi>0$ such that 
\begin{equation}\label{size_condition}
\max_{1\le i \le M, 1\le j \le N} |x_{ij}| \le C_1 N^{-\phi}.
\end{equation}
Let $\mathbf u \equiv \mathbf u_M \in \mathbb C^M$ and $\mathbf v\equiv \mathbf v_N\in \mathbb C^N$ denote sequences of deterministic unit vectors. Then for any fixed (small) $\epsilon>0$ and (large) $D>0$, we have
\begin{equation}\label{boundE}
\|\mathbb E F^{(M)}_{Q_1,\mathbf u}-F^{(M)}_{1c,\mathbf u}\| + \|\mathbb E F^{(N)}_{Q_2,\mathbf v}-F^{(N)}_{2c}\|\le N^{-1+\epsilon}
\end{equation}
for sufficiently large $N$, and for $\mathfrak a:=\min(2\phi, 1/2)$, 
\begin{equation}\label{boundp}
\mathbb P\left(\|F^{(M)}_{Q_1,\mathbf u}-F^{(M)}_{1c,\mathbf u}\| + \|F^{(N)}_{Q_2,\mathbf v}-F^{(N)}_{2c}\| \ge N^{-\mathfrak a+ \epsilon} \right) \le N^{-D}.
\end{equation}
\end{thm}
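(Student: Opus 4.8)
The plan is to pass to the resolvents $\mathcal G_1(z):=(Q_1-z)^{-1}$ and $\mathcal G_2(z):=(Q_2-z)^{-1}$, $z=E+\ii\eta$: the Stieltjes transform of $F^{(M)}_{Q_1,\mathbf u}$ is exactly $\langle\mathbf u,\mathcal G_1(z)\mathbf u\rangle$ and that of $F^{(N)}_{Q_2,\mathbf v}$ is $\langle\mathbf v,\mathcal G_2(z)\mathbf v\rangle$, while by (\ref{m1cv})--(\ref{F1cv}) their deterministic limits are $m_{1c,\mathbf u}(z)$ and $m_{2c}(z)$. Working with a standard $(M+N)\times(M+N)$ self-adjoint block linearization of $\Sigma^{1/2}X$, whose resolvent carries $\mathcal G_1$ and $\mathcal G_2$ as its diagonal blocks, the first step is an anisotropic local law: for any fixed $\epsilon,D>0$, with probability at least $1-N^{-D}$,
\begin{equation*}
\bigl|\langle\mathbf u,\mathcal G_1(z)\mathbf u\rangle-m_{1c,\mathbf u}(z)\bigr|+\bigl|\langle\mathbf v,\mathcal G_2(z)\mathbf v\rangle-m_{2c}(z)\bigr|\ \le\ \sqrt{\frac{\Im m_{2c}(z)}{N\eta}}+\frac{1}{N\eta}+N^{-2\phi+\epsilon},
\end{equation*}
uniformly for $E$ in a bounded neighbourhood of $\mathrm{supp}\,\rho_{2c}\cap(0,\infty)$ and $N^{-1+\epsilon}\le\eta\le\tau^{-1}$, with a mild extension past the $\tau$-regular edges and to a neighbourhood of $0$ (legitimate because $|d_N-1|\ge\tau$ keeps the nonzero spectrum of $Q_{1,2}$ bounded away from $0$). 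This is obtained by the usual self-consistent resolvent expansion (Schur complement / minor removal) together with large-deviation bounds for the bilinear forms that appear; the regularity of $\Sigma$ in Definition \ref{def_regular} supplies the square-root edge behaviour and a bounded density for $\rho_{2c}$, while the additive $N^{-2\phi+\epsilon}$ records that (\ref{conditionA3}) and (\ref{size_condition}) provide only fourth-moment control at scale $N^{-\phi}$. The same analysis also yields eigenvalue rigidity near the classical locations $\gamma_k$ and singular-vector delocalization, which are part of the package used to push the local law down to $\eta=N^{-1+\epsilon}$.

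For the bound (\ref{boundE}) on the expected VESD the key additional input is a sharpening of the local law in expectation: uniformly on the same domain,
\begin{equation*}
\bigl|\mathbb E\langle\mathbf u,\mathcal G_1(z)\mathbf u\rangle-m_{1c,\mathbf u}(z)\bigr|+\bigl|\mathbb E\langle\mathbf v,\mathcal G_2(z)\mathbf v\rangle-m_{2c}(z)\bigr|\ \le\ \frac{N^{\epsilon}}{N\eta},
\end{equation*}
which improves on the pointwise fluctuation of order $(N\eta)^{-1/2}$ because, on taking expectations in the matrix self-consistent (Dyson) equation, the leading quadratic error term averages out (a fluctuation-averaging effect). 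The last step is a quantitative smoothing (Bai-type) inequality: since $F^{(M)}_{1c,\mathbf u}$ and $F^{(N)}_{2c}$ are, away from an atom at $0$ that the gap $|d_N-1|\ge\tau$ renders harmless, absolutely continuous with uniformly bounded densities and a square-root profile at the $\tau$-regular edges, the Kolmogorov distance between a measure and its limit is controlled, roughly, by $V+\int_V^{\tau^{-1}}\sup_E\bigl|\Im(m-m_c)(E+\ii\eta)\bigr|\,\dd\eta$ with $V=N^{-1+\epsilon}$. Inserting the expectation bound $N^\epsilon/(N\eta)$ gives $\lesssim V+N^{\epsilon}\log(1/V)/N\le N^{-1+2\epsilon}$, i.e. (\ref{boundE}); inserting instead the high-probability bound of Step 1 and integrating in $\eta$ (the edge neighbourhoods, where $\Im m_{2c}\sim\sqrt\eta$, only help) yields $\lesssim V+N^{-1/2+\epsilon}+N^{-2\phi+\epsilon}\le N^{-\mathfrak a+\epsilon}$; a union bound over a polynomially fine net of energies $E$ followed by Borel--Cantelli in $N$ then gives the almost-sure statement (\ref{boundp}).

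The step I expect to be hardest is the in-expectation sharpening down to the optimal scale $\eta\ge N^{-1+\epsilon}$ for a \emph{general} diagonal $\Sigma$: the anisotropic Dyson equation couples $\mathbf u$ to the full matrix $(1+m_{2c}(z)\Sigma)^{-1}$, so controlling the expectation of the quadratic remainder --- and showing it is genuinely of order $N^{\epsilon}/(N\eta)$ uniformly, in particular near the regular edges where the Dyson equation is only stable up to an inverse-square-root loss in the distance to the edge --- requires a careful higher-order resolvent expansion, unlike the scalar computation available when $\Sigma=I$. A secondary difficulty, and the source of the exponent $\mathfrak a=\min(2\phi,1/2)$ in (\ref{boundp}), is that only the fourth moment is assumed in (\ref{conditionA3}), which together with the weak size bound (\ref{size_condition}) limits the strength of the large-deviation inputs used throughout.
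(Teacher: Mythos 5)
Your outline matches the paper's architecture closely: pass to the linearized resolvent $G$ of $H$, establish the anisotropic local law in high probability, establish a stronger in-expectation local law of order $(N\eta)^{-1}$ by exploiting a fluctuation-averaging cancellation, and then transfer the Stieltjes-transform estimates to a Kolmogorov bound. These are precisely Theorems \ref{lem_EG0}, \ref{lem_EG}, \ref{thm_large}, and the argument of Section \ref{section_proof}. You also correctly flag the genuinely hard step --- the in-expectation improvement at the optimal scale under a weak (fourth-moment plus bounded support) assumption --- which the paper devotes Section \ref{proof_lem_EG} and the separately proved Theorem \ref{thm_large} to, via a graphical resolvent expansion; and you correctly trace the exponent $\mathfrak a=\min(2\phi,1/2)$ to the $q^2=N^{-2\phi}$ term that the fourth-moment assumption forces into (\ref{ANISO_LAW2}).

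The place where your route diverges from the paper is the transfer step, and it is worth being more careful there. You invoke a ``Bai-type'' inequality of the form $\|F-F_c\|\lesssim V+\int_V^{\tau^{-1}}\sup_E|\Im\Delta m(E+\ii\eta)|\,\dd\eta$; the paper instead builds a smooth indicator $f_{E_1,E_2,\eta_0}$ with $\eta_0=N^{-1+\omega}$, applies Helffer--Sj{\"o}strand, and integrates by parts to produce the decomposition (\ref{term1})--(\ref{term5}). Crucially, the region $\eta<\eta_0$ is not handled by the Stieltjes-transform bound alone: the paper uses the monotonicity of $\eta\mapsto\eta\Im\langle\mathbf v,\mathcal G_2\mathbf v\rangle$ to obtain (\ref{lower}) down to $\eta=0$, plus the explicit local-mass estimate (\ref{estimaten})--(\ref{estimaten0}) at scale $\eta_0$, plus rigidity (\ref{rigidity2}) to fix the boundary values (\ref{nhat_small}). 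Your single-line inequality conflates these ingredients; without the local-mass estimate the argument does not close (the random measure has atoms, so $\Re\Delta m$ is not integrable down to $\eta=0$). The ingredients you would need are exactly the ones you mention elsewhere in passing (delocalization, rigidity), but they enter the transfer step concretely, not just as ``part of the package''.

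Two smaller slips. First, (\ref{boundp}) is a probability bound $\mathbb P(\cdots\ge N^{-\mathfrak a+\epsilon})\le N^{-D}$, not an almost-sure statement; the union bound over a fine net together with Lipschitz continuity of the resolvent already gives it, and Borel--Cantelli is the extra step that converts it into the almost-sure bound (\ref{optimal2}) of Corollary \ref{main_cor}, not Theorem \ref{main_thm}. Second, your claimed high-probability bound $\Psi(z)+N^{-2\phi+\epsilon}$ is slightly sharper than what the paper proves and uses in this regime, namely (\ref{ANISO_LAW2}): $q^2+(N\eta)^{-1/2}$ with $q=N^{-\phi}$; the $\Psi$-improvement near the edges is not claimed for general within-block test vectors under the weak support hypothesis. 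This does not change your conclusion because after integrating in $\eta$ both give $O(N^{-1/2}+N^{-2\phi+\epsilon})$, but it is worth knowing which estimate is actually available.
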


As an immediate corollary of Theorem \ref{main_thm}, we have the following result.

\begin{cor}\label{main_cor}
Suppose $d_N$ and $\Sigma$ satisfy the Assumption \ref{main_assm}. Let $X=(x_{ij})$ be an $M\times N$ random matrix whose entries are independent and satisfy (\ref{entry_assm}) and (\ref{entry_assm2}). Suppose there exist constants $a,A>0$ such that
\begin{equation}\label{tail_condition}
\limsup_{s\to \infty} s^a \max_{ i, j }\mathbb P\left( |\sqrt{N}x_{ij}|\ge s\right)\le A
\end{equation}
for all $N$. Let $\mathbf u \equiv \mathbf u_M \in \mathbb C^M$ and $\mathbf v\equiv \mathbf v_N\in \mathbb C^N$ denote sequences of deterministic unit vectors. Then for any fixed $\epsilon>0$, if $a\ge 6$, we have 
\begin{equation}\label{optimal1}
\|\mathbb E F^{(M)}_{Q_1,\mathbf u}-F^{(M)}_{1c,\mathbf u}\| + \|\mathbb E F^{(N)}_{Q_2,\mathbf v}-F^{(N)}_{2c}\| \le N^{-1+\epsilon} 
\end{equation}
for sufficiently large $N$; if $a\ge 8$, we have
\begin{equation}\label{optimal2}
\mathbb P\left( \limsup_{N\to \infty} N^{1/2-\epsilon}\left(\|F^{(M)}_{Q_1,\mathbf u}-F^{(M)}_{1c,\mathbf u}\| + \|F^{(N)}_{Q_2,\mathbf v}-F^{(N)}_{2c}\|\right)\le 1\right) =1.
\end{equation}

\end{cor}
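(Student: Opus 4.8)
The plan is to derive the corollary from Theorem \ref{main_thm} by a standard truncation-and-centering argument, combined with a Borel--Cantelli step for the almost-sure statement. The key point is that Theorem \ref{main_thm} requires the boundedness condition (\ref{size_condition}) on the entries, whereas the corollary only assumes the polynomial tail bound (\ref{tail_condition}); so the first task is to replace $X$ by a truncated matrix $\widetilde X$ whose entries are bounded by $N^{-\phi}$ for a suitable $\phi$, while controlling the error this introduces in the VESD.

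First I would truncate: set $\widetilde x_{ij} := x_{ij}\mathbf 1_{\{|x_{ij}|\le N^{-\phi}\}}$ with, say, $\phi$ chosen so that $2\phi$ is slightly below $1/2$ (to get the exponent $1/2-\epsilon$ in (\ref{optimal2})) and in any case $\phi<1/2$. Under (\ref{tail_condition}) with $a\ge 8$, a union bound gives $\mathbb P(X\ne \widetilde X) = \mathbb P(\exists i,j: |x_{ij}|>N^{-\phi}) \le MN\cdot A N^{-\phi a}\cdot(1+o(1))$; choosing $\phi$ close enough to $1/2$ this is $O(N^{-D})$ for any $D$ when $a\ge 8$ (and summable in $N$), and already $o(1)$ with room to spare when $a\ge 6$. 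Next I would re-center and re-scale: let $\widehat x_{ij} := (\widetilde x_{ij} - \mathbb E\widetilde x_{ij})/s_{ij}$ where $s_{ij}^2 := N\,\mathrm{Var}(\widetilde x_{ij})$, so that $\widehat X$ has independent entries with mean zero and variance exactly $N^{-1}$. Using the moment bounds implied by (\ref{tail_condition}) (finite $a$-th moment for the normalized entries), standard estimates give $|\mathbb E\widetilde x_{ij}| = |\mathbb E x_{ij}\mathbf 1_{\{|x_{ij}|>N^{-\phi}\}}| \lesssim N^{-\phi(a-1)} N^{-1/2} \le C_0 N^{-2-c_0}$ and $|N\,\mathrm{Var}(\widetilde x_{ij}) - 1|\lesssim N^{-\phi(a-2)}\le C_0 N^{-1-c_0}$ once $a\ge 6$ and $\phi$ is close to $1/2$; likewise $|\mathbb E\widehat x_{ij}^2|$ is negligible in the complex case. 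Hence $\widehat X$ satisfies Assumption \ref{main_assm}(i) and the size condition (\ref{size_condition}) with exponent $\phi$, so Theorem \ref{main_thm} applies to $\widehat Q_{1}=\Sigma^{1/2}\widehat X\widehat X^*\Sigma^{1/2}$ and $\widehat Q_2=\widehat X^*\Sigma\widehat X$ with $\mathfrak a = \min(2\phi,1/2)$, which we have arranged to be $1/2$ (or arbitrarily close to it).

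It remains to transfer the conclusion from $\widehat X$ back to $X$. For the expectation bound (\ref{optimal1}): on the event $\{X=\widetilde X\}$ the Kolmogorov distance between the VESD of $Q_1$ and of the scaled matrix $\widehat Q_1$ is controlled by a rank/perturbation argument --- multiplying rows of $X$ by the deterministic scalars $s_{ij}^{-1}(1+O(N^{-1-c_0}))$ and subtracting a deterministic mean matrix of operator norm $O(N^{1/2}\cdot N^{-2-c_0})=o(1)$ perturbs the singular values and the projections $|\langle\xi_k,\mathbf u\rangle|^2$ only by $O(N^{-1-c_0})$ in Kolmogorov distance (one uses the interlacing/rank inequalities for the ESD together with the fact that the weights $|\langle\xi_k,\mathbf u\rangle|^2$ sum to one). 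The VESD is trivially bounded by $1$, so on the complementary event $\{X\ne\widetilde X\}$, which has probability $O(N^{-2})$ when $a\ge 6$, the contribution to $\mathbb E F^{(M)}_{Q_1,\mathbf u}$ is $O(N^{-2})$; combining with (\ref{boundE}) for $\widehat X$ gives (\ref{optimal1}). For the almost-sure bound (\ref{optimal2}): by (\ref{boundp}) applied to $\widehat X$ with $D=2$, together with $\mathbb P(X\ne\widetilde X)=O(N^{-2})$ (from $a\ge 8$ and $\phi$ close to $1/2$), the event $\{\|F^{(M)}_{Q_1,\mathbf u}-F^{(M)}_{1c,\mathbf u}\| + \|F^{(N)}_{Q_2,\mathbf v}-F^{(N)}_{2c}\| \ge N^{-1/2+\epsilon}\}$ has probability summable in $N$, so Borel--Cantelli yields (\ref{optimal2}). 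The main obstacle is the bookkeeping in the truncation step --- making the moment conditions (\ref{entry_assm0})--(\ref{conditionA3}) of Assumption \ref{main_assm} hold for $\widehat X$ with the same $\phi$ that we need for the target exponent, which forces $\phi$ to be taken very close to $1/2$ and requires $a\ge 6$ (resp. $a\ge 8$) precisely so that the truncation errors $N^{-\phi(a-1)}$ and $N^{-\phi(a-2)}$ beat the thresholds $N^{-2-c_0}$; the perturbation estimates for the VESD under row rescaling are routine but must be stated carefully since the weights depend on the (perturbed) eigenvectors, not just the eigenvalues.
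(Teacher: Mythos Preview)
Your overall strategy---truncate, apply Theorem \ref{main_thm} to the truncated matrix, use Borel--Cantelli for the almost-sure statement---is exactly the paper's. But two points in your implementation need correction.

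First, the re-centering and rescaling step is unnecessary and is precisely what the relaxed moment conditions (\ref{entry_assm0})--(\ref{entry_assmex}) in Assumption \ref{main_assm}(i) are designed to avoid. The paper verifies directly that the truncated matrix $\widetilde X$ (without centering) satisfies $|\mathbb E\widetilde x_{ij}|=O(N^{-2-c_0})$, $|\mathbb E|\widetilde x_{ij}|^2-N^{-1}|=O(N^{-2-c_0})$, etc., and then applies Theorem \ref{main_thm} to $\widetilde X$ itself. On the event $\{X=\widetilde X\}$ the two VESDs coincide \emph{exactly}, so no perturbation argument is needed. Your route through $\widehat X$ forces you to compare $F_{\widetilde Q_1,\mathbf u}$ with $F_{\widehat Q_1,\mathbf u}$, and your sketch for this (``interlacing/rank inequalities'') does not work: the VESD carries eigenvector weights $|\langle\xi_k,\mathbf u\rangle|^2$, and these are not stable under small operator-norm perturbations when eigenvalue gaps are small. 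The entrywise rescaling by $s_{ij}$ is also not ``multiplying rows'' since $s_{ij}$ depends on both indices.

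Second, a single choice of $\phi$ does not serve both conclusions. For (\ref{optimal1}) with $a=6$, taking $\phi$ near $1/4$ gives
\[
\mathbb P(X\ne\widetilde X)\;\lesssim\; N^2\cdot N^{-(1/2-\phi)a}\;\sim\; N^{2}\cdot N^{-3/2}\;=\;N^{1/2},
\]
which is not $o(1)$, let alone $O(N^{-2})$ as you claim. The paper instead takes $\phi=\epsilon/6$ (small) for (\ref{optimal1}); since (\ref{boundE}) gives $N^{-1+\epsilon}$ regardless of $\phi$, this is harmless, and now $\mathbb P(X\ne\widetilde X)=O(N^{-1+\epsilon})$ suffices for the expectation bound. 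For (\ref{optimal2}) the paper takes $\phi=(1-\epsilon)/4$ with $a=8$, giving $\mathbb P(X\ne\widetilde X)=O(N^{-2\epsilon})$; this is not directly summable, and the paper handles Borel--Cantelli via a dyadic block argument over $N\in[2^t,2^{t+1})$ rather than summing $N^{-2}$ as you suggest.
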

\begin{proof}[Proof of Corollary \ref{main_cor}]
We use a standard cutoff argument. We fix $a>4$ and choose a constant $\phi>0$ small enough such that $\left(N^{1/2-\phi}\right)^{a} \ge N^{2+\omega}$ for some constant $\omega>0$. Then we introduce the following truncation  
$$\tilde X :=1_{\Omega} X, \ \ \Omega :=\left\{|x_{ij}|\le N^{-\phi} \text{ for all }  1\le i \le M,  1\le j \le N\right\}.$$
By the tail condition (\ref{tail_condition}), we have
\begin{equation}\label{XneX}
\mathbb P(\tilde X \ne X) =O ( N^{2-a/2+a\phi}).
\end{equation}
Moreover, we have
\begin{equation}\label{XneXio}
\begin{split}
& \mathbb P(\tilde X \ne X \ \text{i.o.}) = \lim_{k\to \infty} \mathbb P\left( \cup_{N=k}^\infty \cup_{i=1}^M \cup_{j=1}^N \left\{|x_{ij}|\ge N^{-\phi}\right\}\right)\\
&= \lim_{k\to \infty} \mathbb P\left( \cup_{t=k}^\infty \cup_{N\in [2^t,2^{t+1})} \cup_{i=1}^M \cup_{j=1}^N \left\{|x_{ij}|\ge N^{-\phi}\right\}\right) \\
&\le C \lim_{k\to \infty} \sum_{t=k}^\infty \left(2^{t+1}\right)^2 \left(2^{t(1/2-\phi)}\right)^{-a} \le C \lim_{k\to \infty} \sum_{t=k}^\infty 2^{-\omega t}=0,
\end{split}
\end{equation}
i.e. $\tilde X = X$ almost surely as $N\to \infty$. Here in the above derivation, we regard $M= N/d_N$ as a function depending on $N$.

Using (\ref{tail_condition}) and integration by parts, it is easy to verify that 
\begin{align*}
\mathbb E  \left|x_{ij}\right|1_{|x_{ij}|> N^{-\phi}} =O(N^{-2-\omega/2}), \ \ \mathbb E \left|x_{ij}\right|^2 1_{|x_{ij}|> N^{-\phi}} =O(N^{-2-\omega/2}),
\end{align*}
which imply that
$$|\mathbb E  \tilde x_{ij}| =O(N^{-2-\omega/2}), \  \  \mathbb E |\tilde x_{ij}|^2 = N^{-1} + O(N^{-2-\omega/2}),$$
$$\left| \mathbb E\tilde x_{ij}^2\right| =O( N^{-2-\omega/2}), \ \ \text{if $x_{ij}$ is complex.} $$
Moreover, we trivially have
$$\mathbb E  |\tilde x_{ij}|^4 \le \mathbb E  |x_{ij}|^4 =O(N^{-2}).$$
Hence $\tilde X$ is a random matrix satisfying Assumption \ref{main_assm}. Then using (\ref{boundE}) and (\ref{XneX}) with $a=6$ and $\phi=\epsilon/6$, we conclude (\ref{optimal1}); using (\ref{boundp}) and (\ref{XneXio}) with $\phi=(1-\epsilon)/4$ and $a=8$, we conclude (\ref{optimal2}). 
\end{proof}

\begin{rem}
The estimates (\ref{optimal1}) and (\ref{optimal2}) improve the bounds obtained in \cite{XYZ2013}, and relax the assumptions on moments and $\Sigma$ as well. The convergence rates in (\ref{optimal1}) and (\ref{optimal2}) are optimal up to an $N^{\epsilon}$ factor. In fact, it was proved in \cite{Bai2007} that for an analytic function $f$,
\begin{equation}\label{CLT_bai}
\sqrt{N}\int f(x) d \left( F_{Q_1,\mathbf u}(x) - F_{1c,\mathbf u}(x) \right) \to \mathcal N(0,\sigma_{f,\mathbf u}),
\end{equation}
where $\mathcal N(0,\sigma_{f,\mathbf u})$ denotes the Gaussian distribution with mean zero and variance $\sigma_{f,\mathbf u}$. This shows that the fluctuation of $F_{Q_1,\mathbf u}(x)$ is of order $N^{-1/2}$ and suggests the bound in (\ref{optimal2}). Taking expectation of (\ref{CLT_bai}), one can see that the order of $|\mathbb EF_{Q_1,\mathbf u}(x) - F_{1c,\mathbf u}(x)|$ should be even smaller. 
Moreover, the fluctuation of eigenvalues on the microscopic scale will lead to an error of order at least $N^{-1}$ by the universality of eigenvalues \cite{BPZ1,LS,PY}.
This shows that 
the bound (\ref{optimal1}) should be close to being optimal. We check the bounds (\ref{optimal1}) and (\ref{optimal2}) below with some numerical simulations; see Fig. \ref{fig_Kol}.
\end{rem}

\begin{rem}
In \cite{XYZ2013}, the authors only handle the $M < N$ (i.e. $d_N > 1$) case for $Q_1$, while our proof works for both the $d_N>1$ and $d_N<1$ cases. However, in the case with $d_N \to 1$, we will encounter some difficulties near the leftmost edge $a_{2L}$, which converges to $0$ as $N\to \infty$ and violates the regularity condition (\ref{regular1}). 
We will try to relax this assumption in the future.
\end{rem}

\begin{rem}\label{off_rem}
In Theorem \ref{main_thm}, we have assumed that $\Sigma$ is diagonal. But our results can be extended immediately to the case with a general non-diagonal population covariance matrix $\mathbf C$ for multivariate normal data. More precisely, let $X$ be a random matrix with $i.i.d.$ Gaussian entries and suppose $\mathbf C$ has eigendecomposition $\mathbf C= U^* \Sigma U$. Then we have
\begin{equation}\label{off_rem_eq}
\mathbf C^{1/2}XX^* \mathbf C^{1/2}=U^* ( \Sigma^{1/2}XX^*\Sigma^{1/2}) U \quad \text{in distribution}.
\end{equation}
Hence for any unit test vector $\mathbf u$, our results can be applied to the VESD of $\Sigma^{1/2}XX^*\Sigma^{1/2}$ with test vector $U\mathbf u$. 

For generally distributed data, under sufficiently strong moment assumptions, it is possible to prove the same results for the case with non-diagonal population covariance matrix $\mathbf C$. In particular, if the entries of $\sqrt{N}X$ have arbitrarily high moments, it can be proved that (\ref{optimal1}) and (\ref{optimal2}) hold for the VESD of $\mathbf C^{1/2}XX^* \mathbf C^{1/2}$. The main inputs for the proof will include: (a) the local law in \cite[Theorem 3.6]{Anisotropic} (which generalizes the one in Theorem \ref{lem_EG0} to the non-diagonal $\mathbf C$ case with generally distributed data), (b) Theorem \ref{main_thm} (proved for the diagonal $\mathbf C$ case),   (c) a comparison argument in \cite[Section 7]{Anisotropic} (which extends Theorem \ref{main_thm} to the non-diagonal case through comparison with the diagonal case), and (d) the Helffer-Sj{\"o}strand arguments in Section \ref{section_proof}. However, under weaker moment assumptions as in Corollary \ref{main_cor}, the proof will be much harder. For step (a), we need to use the local law proved in \cite{YF_separable}, which further generalizes the one in \cite{Anisotropic} to the heavy-tailed case. The main issue will be that the error bounds in steps (a) and (c) are not sharp enough, which does not give the optimal convergence rates as in (\ref{optimal1}) and (\ref{optimal2}). We would like to deal with this problem in the future, and focus on proving a sharp bound for the convergence rate of VESD in the diagonal $\mathbf C$ case in this article. 
\end{rem}

\begin{rem}
As discussed above, the convergence of the stochastic process $X^\Sigma_{M,\mathbf u}$ defined in (\ref{Xsigma}) to the Gaussian process $\mathbf B^\Sigma_{\mathbf u}$ in (\ref{Bsigma}) is also a very important question, which is complementary to the results in Corollary \ref{main_cor}. The convergence of $X^I_{M,\mathbf u}$ to the Brownian bridge was first proved in the null case $\Sigma=I$, for some special vectors of the form $\mathbf u=M^{-1/2}(\pm 1,\cdots, \pm 1)$ in \cite{JS1990}. The result was later extended to the case with a general fixed vector $\mathbf u$ in \cite{Bai2007}. More precisely, it was proved in \cite{Bai2007} that for any fixed vector $\mathbf u$ and analytic functions $g_1, \cdots, g_k$, the random vector
$$(\hat X_{M,\mathbf u}(g_1), \cdots, \hat X_{M,\mathbf u}(g_k)), \ \ \hat X_{M,\mathbf u}(g_i):=\int g_i(x) dX_{M,\mathbf u}^{I}(F_{Q_1}(x)), \ 1\le i \le k,$$
converges to a Gaussian vector with mean zero and certain covariance function. We expect that combining the method in \cite{Bai2007} and the new tools in this paper, one can prove a similar convergence result for $X^\Sigma_{M,\mathbf u}$ in the case with a non-scalar $\Sigma$. This will be studied in a future paper.
\end{rem}

The rest of this paper is organized as follows. In Section \ref{Sec_simulappl}, we check the results in Corollary \ref{main_cor} with some numerical simulations, and then introduce some applications of our results in high-dimensional statistical inference. We prove Theorem \ref{main_thm} in Section \ref{main_result} using Stieltjes transforms. In the proof, we mainly use Theorems \ref{lem_EG0}-\ref{thm_large}, which give the desired anisotropic local laws for the resolvents of $Q_1$ and $Q_2$. 
Theorem \ref{lem_EG} constitutes the main novelty of this paper, and its proof will be given in Section \ref{proof_lem_EG}. The proofs of Theorem \ref{lem_EG0} and Theorem \ref{thm_large} will be given in the supplementary material.

\section{Simulations and applications}\label{Sec_simulappl}


In this section, we first check the convergence rate of the (expected) VESD to the deformed MP law with some numerical simulations. Then we will discuss briefly the applications of our results in high-dimensional statistical inference procedures.

\subsection{Simulations}\label{sec_appl}

The simulations are performed under the following setting: $M= 2N$, i.e. $d_N=0.5$; the entries $\sqrt{N}x_{ij}$ are drawn from a distribution $\xi$ with mean zero, variance 1 and tail $\mathbb P(|\xi| \ge s) \sim s^{-6}$ for large $s$; the unit vector $\mathbf v$ is randomly chosen for each $N$. In Fig. \ref{fig_Kol}, we plot the Kolmogorov distances $\|F_{Q_2,\mathbf v}-F_{2c}\|$ and $\|\mathbb E F_{Q_2,\mathbf v}-F_{2c}\|$ for the following two choices of $\Sigma$: $\Sigma=I$ with ESD $\pi=\delta_1$, and 
\begin{equation}\label{sigma12}
\Sigma = {\text{diag}}(\underbrace{1,\cdots, 1}_{M/2},\underbrace{4,\cdots, 4}_{M/2}), \ \ \text{ with ESD } \pi= 0.5\delta_1 + 0.5\delta_4.
\end{equation} 
For each $N$, we take an average over 10 repetitions to represent $F^{(N)}_{Q_2,\mathbf v}$ and an average over $4N^2$ repetitions to approximate $\mathbb E F^{(N)}_{Q_2,\mathbf v}$. Under each setting, we choose an appropriate function $f(x)$ to fit the simulation data. It is easy to observe that the convergence rate of the VESD is bounded by $O(N^{-1/2})$, while the convergence rate of the expected VESD has order $N^{-1}$. This verifies the results in Corollary \ref{main_cor}.

\begin{figure}[htb]
\centering
\subfigure[$\pi= \delta_1$]{\includegraphics[width=6.2cm]{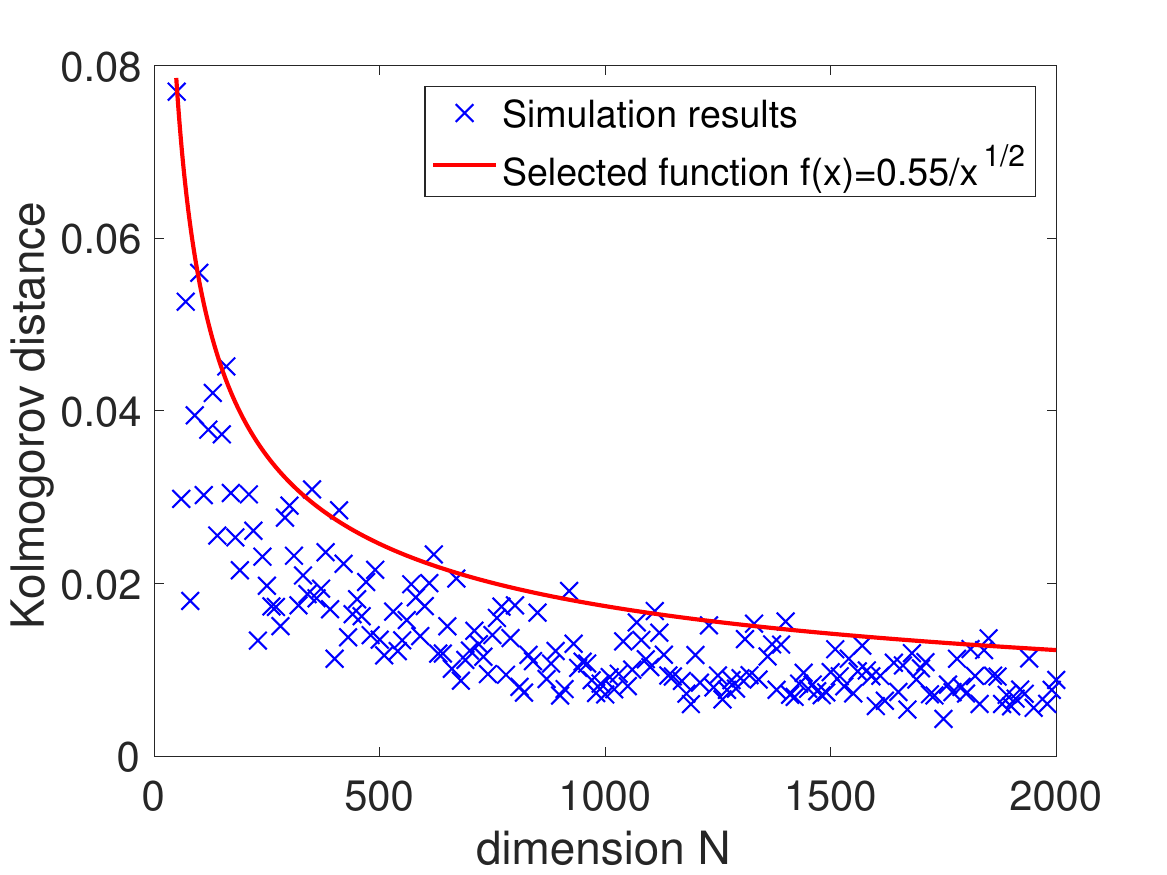}
\includegraphics[width=6.2cm]{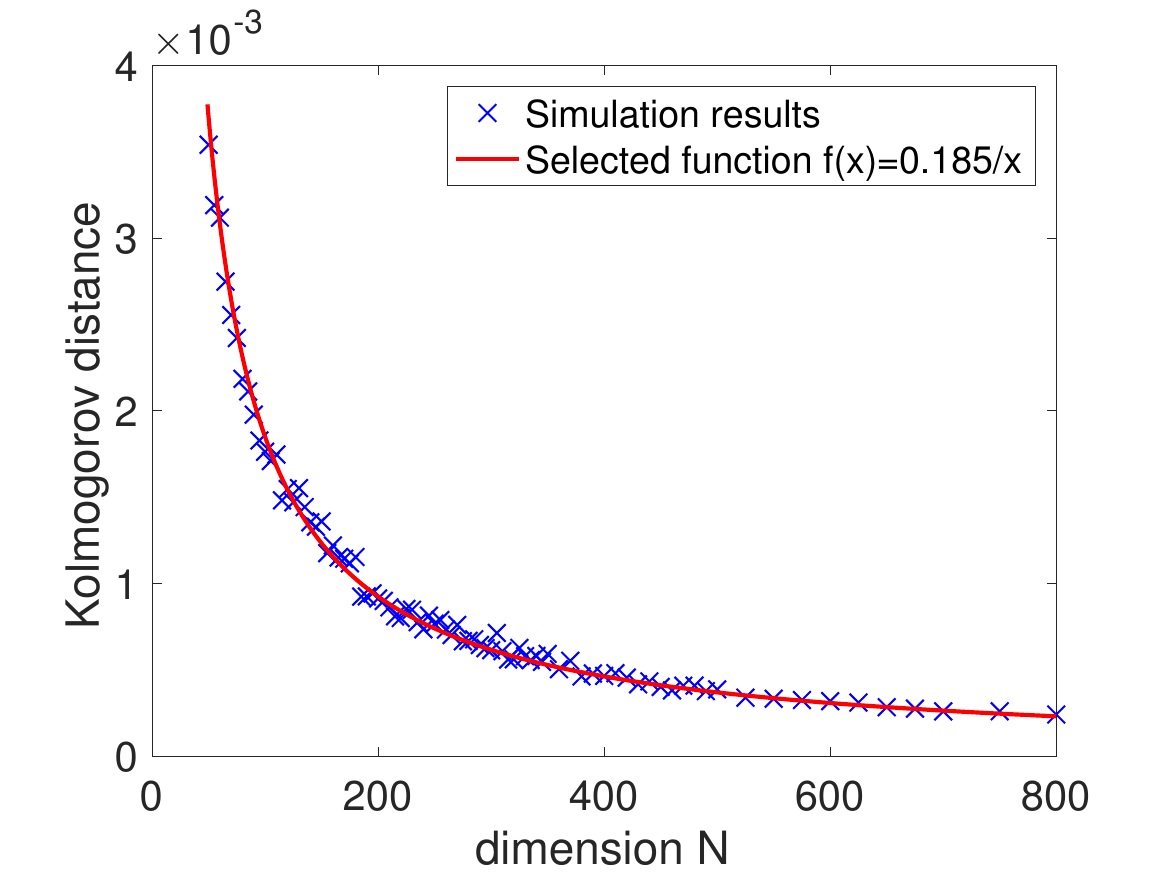}}
\subfigure[$\pi= 0.5\delta_1 + 0.5\delta_4$]{\includegraphics[width=6.2cm]{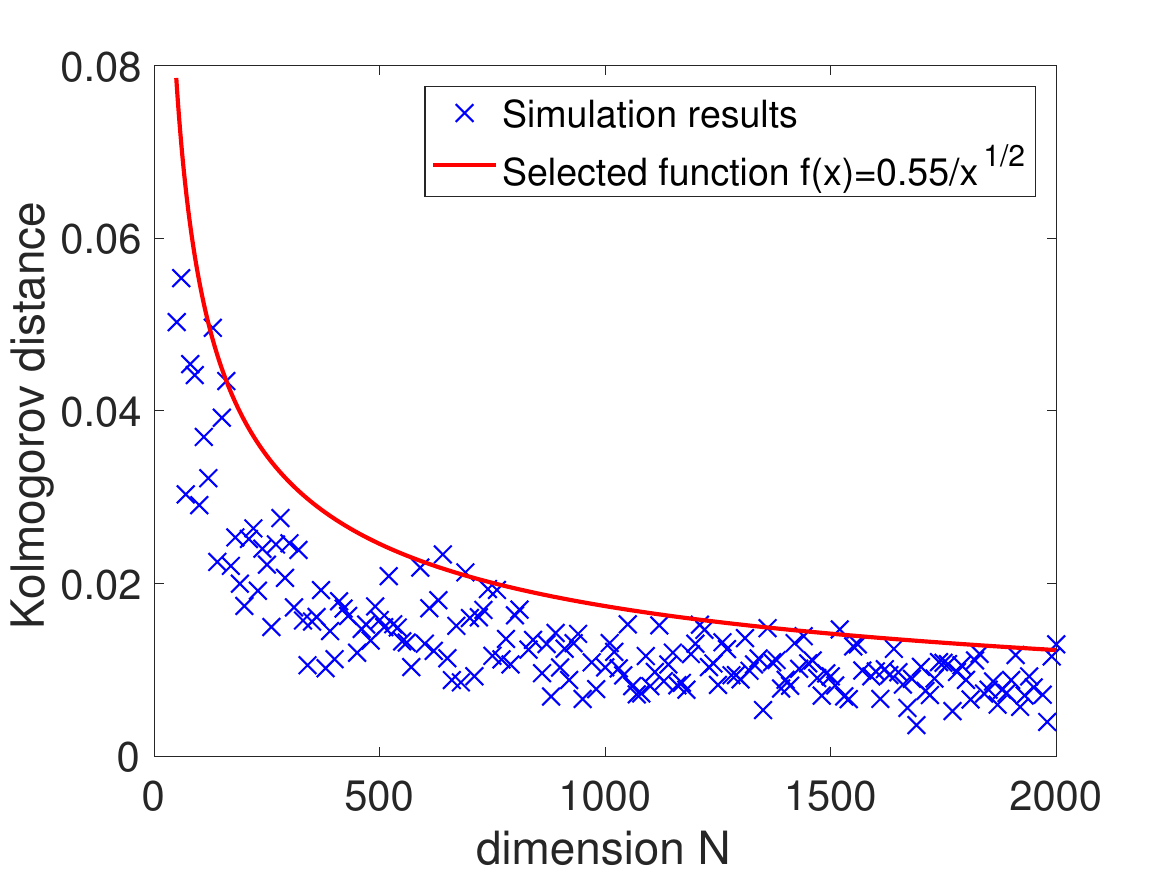}
\includegraphics[width=6.2cm]{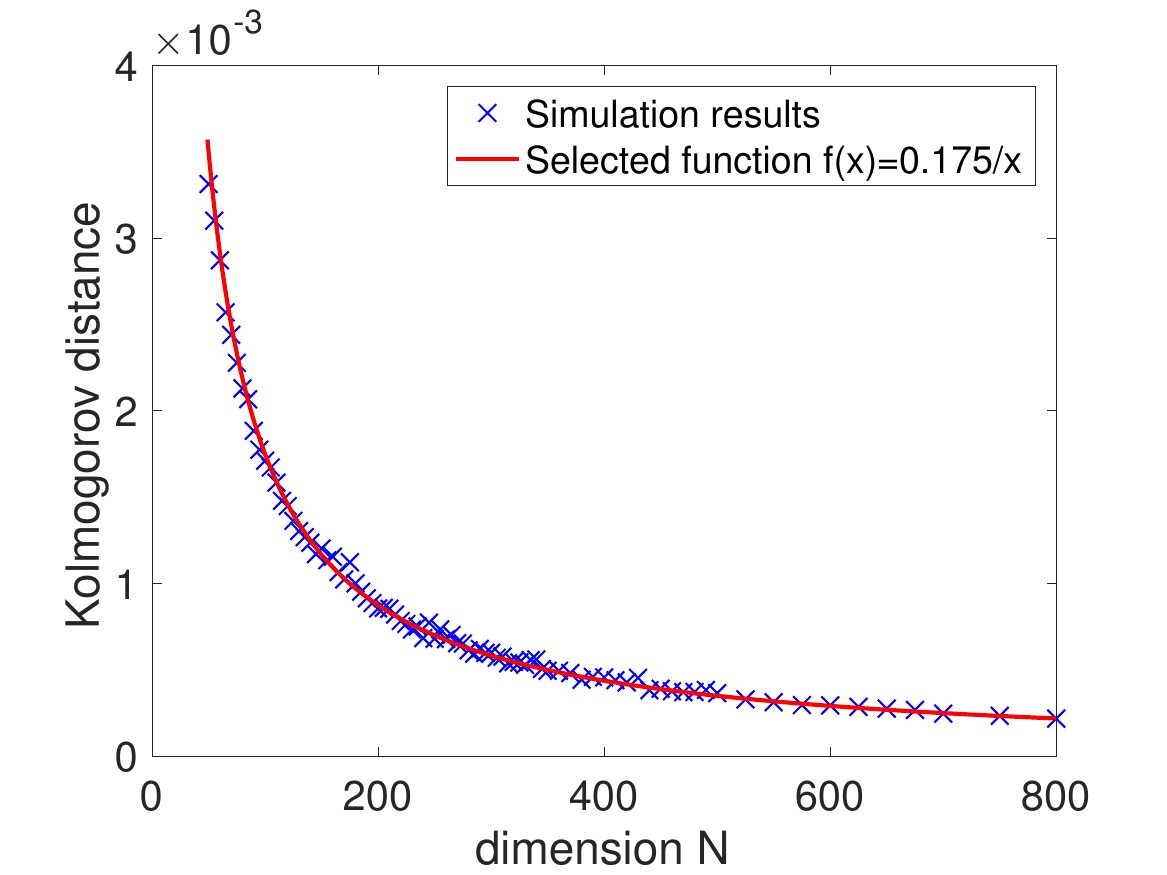}}
\caption{
The left figures of (a) and (b) plot $\|F^{(N)}_{Q_2,\mathbf v} - F_{2c}^{(N)}\| $ as $N$ increases from 50 to 2000, and we choose $f$ to fit the upper envelope of the data. The right figures plot $\|\mathbb E F^{(N)}_{Q_2,\mathbf v} - F_{2c}^{(N)}\|$ as $N$ increases from 50 to 800.
}
\label{fig_Kol}
\end{figure}

As discussed before, the convergence of $F_{Q_2,\mathbf v}$ to $F_{2c}$ for any sequence of deterministic unit vectors $\mathbf v$ can be used to characterize the asymptotical Haar property of the eigenmatrix of $Q_2=X^*\Sigma X$ (which also implies the asymptotical Haar property of the eigenmatrix of $Q_1$ when $\Sigma=\sigma^2 I$). 
On the other hand, for a general $\Sigma$, the eigenmatrix of $Q_1$ is not asymptotically Haar distributed anymore and the VESD of $Q_1$ will depend on $\mathbf v$. Moreover, (\ref{m1cv}) gives an explicit dependence of $\mathbf F_{1c}$ on $\Sigma$, which should be of interest to statistical applications. {(For more details on the application of this principle, the reader can refer to the discussions in Section \ref{section_appl3}.)} 
In Fig.\,\ref{fig_VESD}(a), we plot $F_{Q_1,\mathbf v}$ for $\Sigma$ in (\ref{sigma12}) and different choices of $\mathbf v_i$, $i=1,2,3$. One can observe a transition of $F_{Q_1,\mathbf v}$ when $\mathbf v$ changes from the direction corresponding to the smaller eigenvalues of $\Sigma$ to the direction corresponding to the larger eigenvalues of $\Sigma$. {In Fig.\,\ref{fig_VESD}(b), we take $\Sigma=UDU^*$, where $D$ is as in (\ref{sigma12}), $U$ is a randomly chosen unitary matrix, and $\mathbf w_i = U\mathbf v_i$. 
One can see that even if $\Sigma$ is non-diagonal, the convergence of the VESD of $Q_1$ still holds (see Remark \ref{off_rem}). 

\begin{figure}[htb]
\centering
\includegraphics[width=\columnwidth]{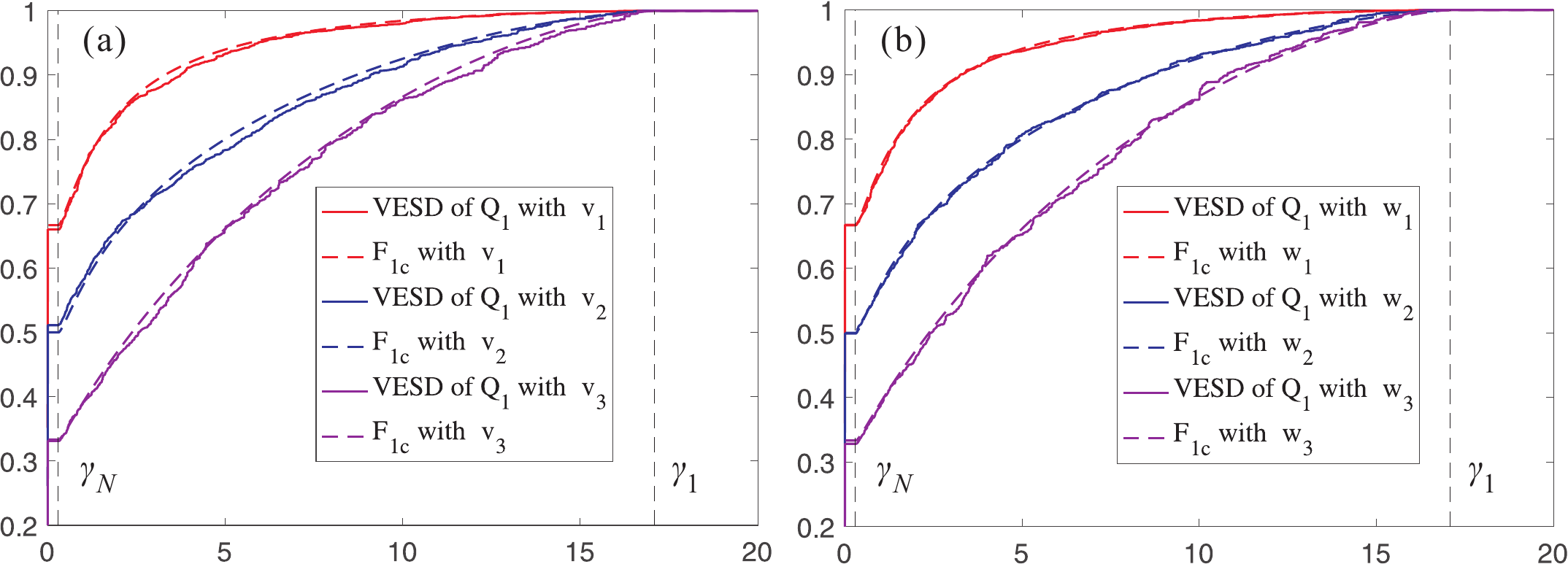}
\caption{ The plots for $F_{Q_1,\mathbf v}(x)$ and $F_{1c,\mathbf v}(x)$ with $N=2000$, $M=2N$ and under the settings in Fig. \ref{fig_Kol}. We take ${\mathbf v}_1 = \sqrt{{2}/{M}}({\protect\underbrace{1,\cdots,1}_{M/2}}, 0 , \cdots, 0), $ $\mathbf v_3=\sqrt{{2}/{M}}(0 , \cdots, 0,{\protect\underbrace{1,\cdots,1}_{M/2}}),$ $\mathbf v_2=({\mathbf v_1 + \mathbf v_3})/\sqrt{2}$, and $\mathbf w_i = U\mathbf v_i$, $i=1,2,3$. The dashed lines mark the places of the left edge $\gamma_N$ and the right edge $\gamma_1$ of the spectrum (recall (\ref{gammaj})).}
\label{fig_VESD}
\end{figure}


\subsection{Statistical applications}\label{sec_appl2}


\subsubsection{Detection of signals in noise}

Consider the following model:
\begin{equation} \label{model_application}
\mathbf{x} = A\mathbf{s} + \mathbf z,
\end{equation}
where $A$ is an $M\times k$ deterministic matrix, $\mathbf{s}$ is a $k$-dimensional mean zero signal vector, and $\mathbf{z}$ is an $M$-dimensional noise vector with $i.i.d.$ centered entries. Moreover, the signal vector and the noise vector are assumed to be independent. In practice, suppose we observe $N$ such $i.i.d.$ samples and set the matrix $X=(\mathbf x_1, \cdots, \mathbf x_N)$. This signal-plus-noise model is a standard model in classic signal processing \cite{SMK}. A fundamental task is to detect the signals via observed samples, and the very first step is to know whether there exists any such signal, i.e., 
\begin{equation}\label{model_null0}
\mathbf{H}_0: \ k=0 \quad \text{vs.} \quad \mathbf{H}_1: \ k \geq 1.
\end{equation}
The model (\ref{model_application}) is also widely used in various other fields, such as multivariate statistics, wireless communications, bioinformatics, and finance. 
For example, in multivariate statistics one wants to determine whether there exists any relation between two sets of variables. To test the independence, we can adopt the multivariate multiple regression model (\ref{model_application}),  where $\mathbf{x}$ and $\mathbf{s}$ are the two sets of variables for testing \cite{JW}. Then we can test the null hypothesis that these regression coefficients are all zero:
\begin{equation} \label{model_null}
\mathbf{H}_0: \ A=0 \  \ \text{vs}. \ \ \mathbf{H}_1: \ A \neq 0.
\end{equation}
Another example is from financial studies \cite{FF, FFL, FLL}. In the empirical research of finance, (\ref{model_application}) is the factor model, where $\mathbf{s}$ is the common factor, $\Gamma$ is the factor loading matrix and $\mathbf{z}$ is the idiosyncratic component. In order to analyze the stock return $\mathbf{x},$ we first need to know if the factor $\mathbf{s}$ is significant for the prediction. Then a statistical test can be also constructed as (\ref{model_null}).

\begin{figure}[htb]
\centering
\includegraphics[width=6.8cm]{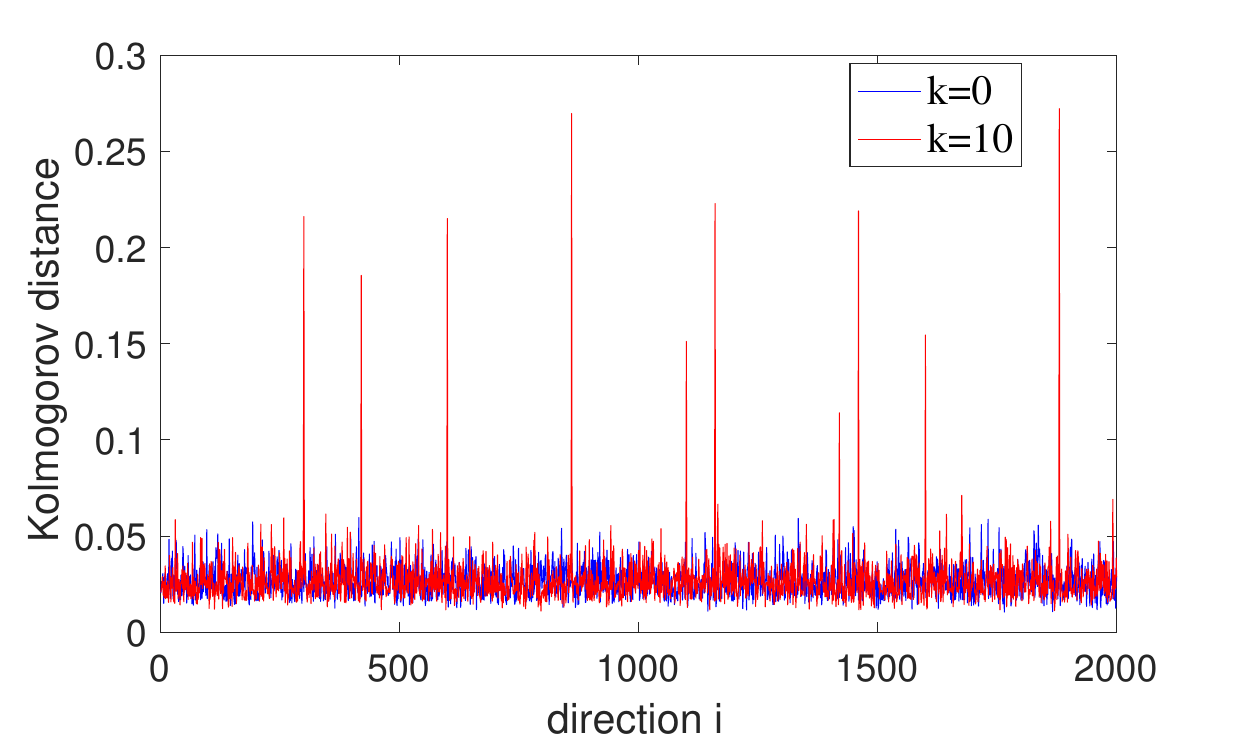}
\caption{$\|F^{(M)}_{Q_1,\mathbf e_i} - F_{MP}\| $ for $Q_1= XX^*$, where $X$ is the sample matrix of $\mathbf x$ in (\ref{model_application}).}
\label{fig_signal}
\end{figure}

For the above hypothesis testing problems (\ref{model_null0}) and (\ref{model_null}), under the null hypothesis $\mathbf{H}_0$, we have that $F_{Q_1,\mathbf u}^{(M)}=F_{MP}+O(M^{-1/2+\epsilon})$ for any unit vector $\mathbf u$ independent of $X$  by our results. As an example, we perform a simulation under the following setting: $M=2000$, $N= 2M$; the entries $\sqrt{N}z_{ij}$ are $i.i.d.$ Gaussian with mean 0 and variance 1; the entries $\sqrt{N}s_{ij}$ are $i.i.d.$ Bernoulli $\pm 1$ random variables. We choose $A=DV$, where $V$ is a randomly chosen unitary matrix and $D$ is an $M\times k$ matrix satisfying the following: all the entries of $D$ are zero except $D_{n(i)i}$, and each $D_{n(i)i}$ is sampled uniformly from $[0.4,0.8]$. Here $n(i)$, $1\le i \le k$, are $k$ values sampled uniformly at random from the integers 1 to $M$. In Fig. \ref{fig_signal}, we plot the Kolmogorov distances $\|F_{Q_1,\mathbf e_i}- F_{MP}\|$ with respecto to $i$, where $\mathbf e_i$ denotes the standard unit vector along $i$-axis. Comparing the $k=10$ case with the null case, we observe 10 obvious peaks. Moreover, the positions of the peaks correspond to the values of $n(i)$, and the heights of the peaks give the strengths of the signals. Note that if one use the bound $M^{-1/4}$ in \cite{XYZ2013}, then the {estimated} noise would be of order $0.15$, which does not allow one to detect the smallest few signals.

For Gaussian noise, some classical statistical procedures to test the number of signals usually use the largest eigenvalue of the sample covariance matrix \cite{BDMN,NE,NS}. The key property is that the largest eigenvalue converges to the Gaussian distribution under the $N^{1/2}$ scaling if it is an outlier, and the Tracy-Widom distribution under the $N^{2/3}$ scaling otherwise. Onatski proposed to use the test statistic $R=(\lambda_1 - \lambda_2)/(\lambda_2-\lambda_3)$, which is asymptotically pivotal \cite{Economics}. Our method is more general in the sense that it can be also applied in the case without outliers. For example, one can check numerically that the sample covariance matrices in Fig.  \ref{fig_signal} has no outliers. 

\subsubsection{Separable covariance matrices}

Consider data matrices of the form
\begin{equation}\label{separaY}
Y= \Sigma_1^{1/2} X \Sigma_2^{1/2},
\end{equation}
where $X$ is an $M\times N$ random matrix as in Corollary \ref{main_cor}, and $\Sigma_{1}$ and $\Sigma_2$ are $M\times M$ and $N\times N$ deterministic positive-definite matrices, respectively. Then $Q_1:= YY^*$ is called a {\it separable covariance matrix}, and it is widely used to model the spatio-temporal sampling data \cite{Karoui2009,  Separable, WANG2014}. Without loss of generality, we shall call $\Sigma_1$ the spatial covariance matrix and $\Sigma_2$ the temporal covariance matrix. Suppose we want to determine whether the spatial identity holds, i.e.,
$$ \mathbf{H}_0: \ \Sigma_1 \propto I \  \ \text{vs}. \ \ \mathbf{H}_1: \ \Sigma_1 \not\propto I.$$
For this hypothesis testing problem, under $\mathbf{H}_0,$ we have that $F_{Q_1,\mathbf u_1}^{(M)}=F_{Q_1,\mathbf u_2}^{(M)} +O(M^{-1/2+\epsilon})$ for any unit vectors $\mathbf u_{1,2}$ independent of $X$. More generally, we can test whether $\Sigma_1 = \Sigma_0$ for some given positive definite matrix $\Sigma_0$ by using $\Sigma_0^{-1/2}Y$. Similarly, the temporal identity can also be tested using the VESD of $Q_2:= Y^*Y$. Note that our error bound allows us to test very weak signals up to order $M^{-1/2}$ with one sample. The precision can be further improved if one can take average over many samples. 
\begin{figure}[htb]
\centering
\includegraphics[width=6.2cm]{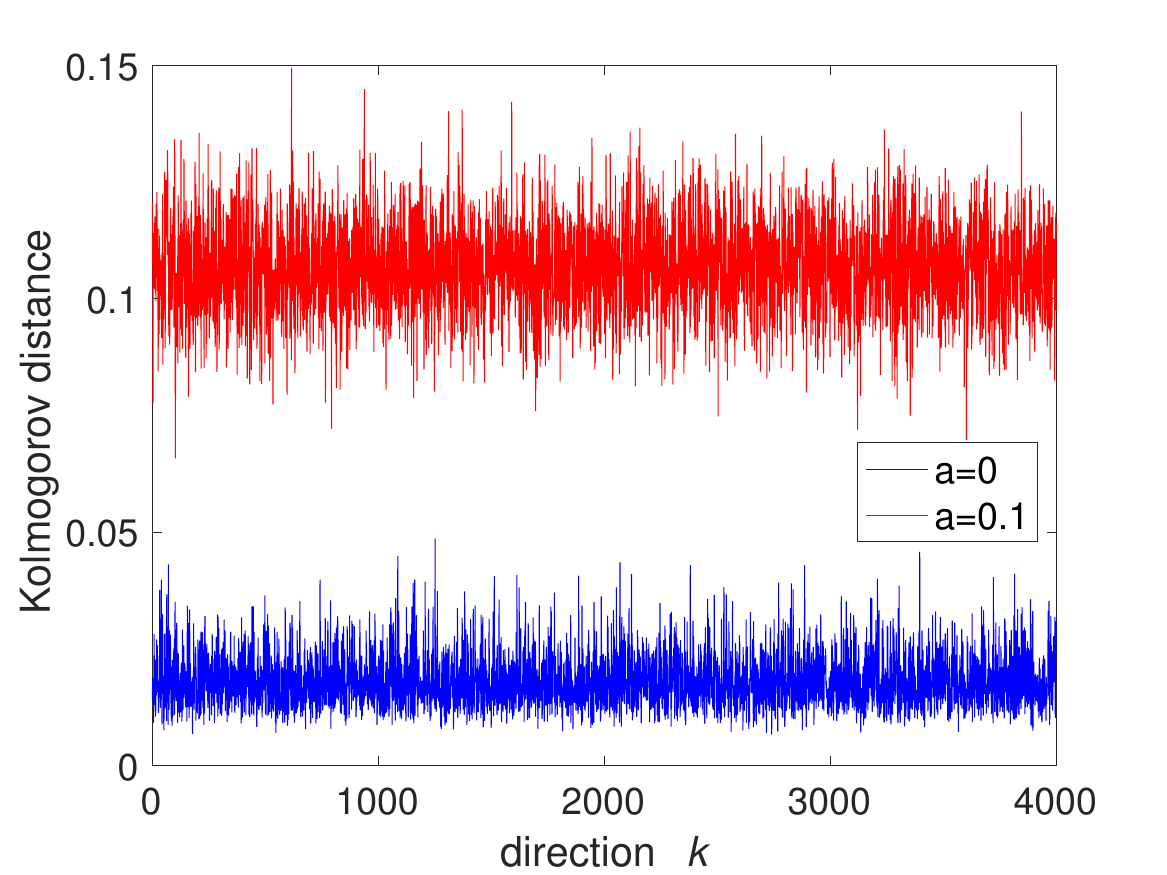}
\includegraphics[width=6.2cm]{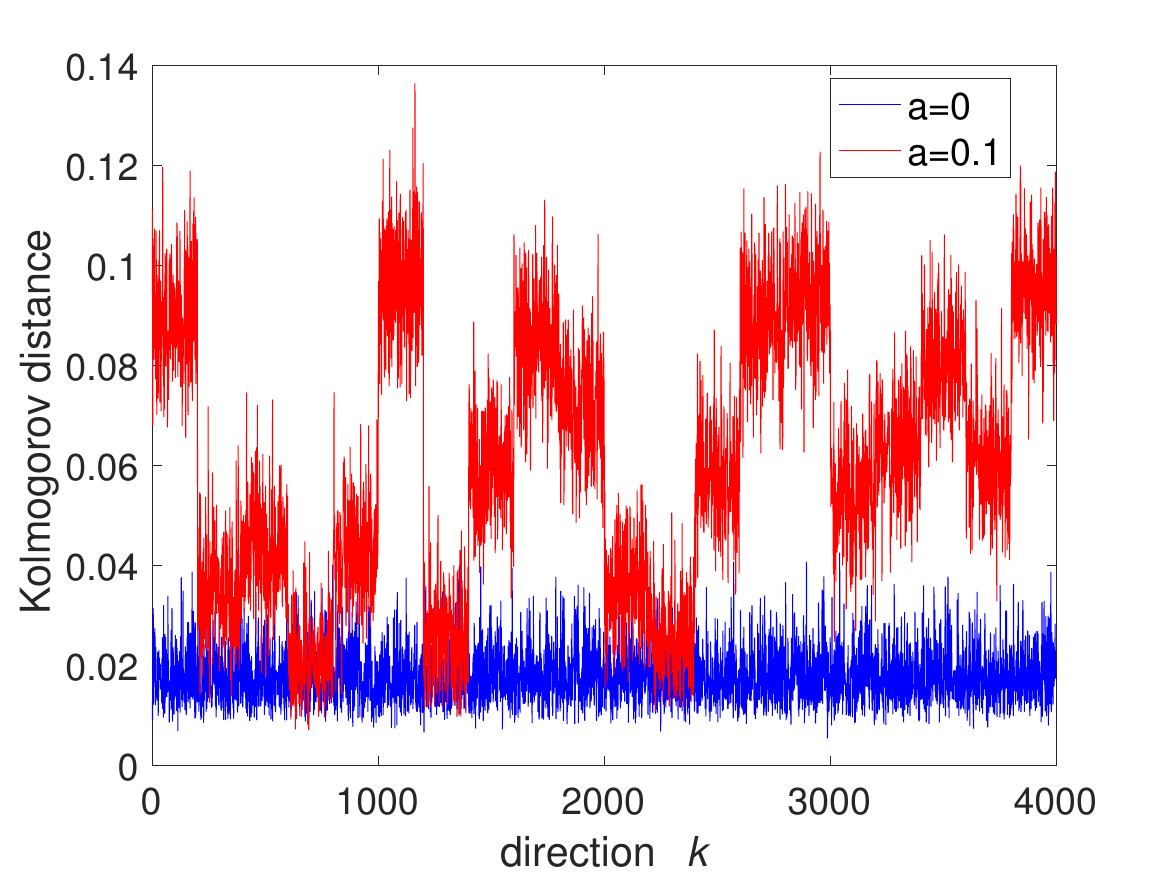}
\caption{We plot $\|F^{(M)}_{Q_1,\mathbf e_k} - M^{-1}\sum_k F_{Q_1,\mathbf e_k}\| $ for $Q_1= YY^*$ with $\Sigma_{1,2}$ in (\ref{separaY}). We take $M=4000$, $N=2M$, $A$ as in (\ref{case1}) in the left figure, and $A$ as in (\ref{case2}) in the right figure.}
\label{fig_separable}
\end{figure}
We now illustrate this application with some numerical simulations. The simulations are performed under the following setting: $N= 2M$; the entries $\sqrt{N}x_{ij}$ are $i.i.d.$ Gaussian with mean 0 and variance 1. We consider separable covariance matrices of the form (\ref{separaY}) with 
\begin{equation}\label{separa12}
\Sigma_2^{1/2} = {\text{diag}}(\underbrace{1,\cdots, 1}_{N/2},\underbrace{2,\cdots, 2}_{N/2}),  \quad \Sigma_1^{1/2} = I + a A,
\end{equation} 
where
\begin{equation}\label{case1}
A={\text{diag}}(\underbrace{1,\cdots, 1}_{200},\underbrace{-1,\cdots, -1}_{200}, \underbrace{1,\cdots, 1}_{200}, \cdots) , 
\end{equation}
or for $i.i.d.$ sequence of random variables $b_1, b_2 , \cdots \sim \text{unif}(-1,1)$,
\begin{equation}\label{case2}
A={\text{diag}}(\underbrace{b_1,\cdots, b_1}_{200},\underbrace{b_2,\cdots, b_2}_{200}, \underbrace{b_3,\cdots, b_3}_{200}, \cdots).
\end{equation}
In Fig. \ref{fig_separable}, for $M=4000$, $a=0.1$ and the above two choices of $A$, we plot the Kolmogorov distances $\|F_{Q_1,\mathbf e_k}- M^{-1}\sum_k F_{Q_1,\mathbf e_k}\|$ with respecto to $k$. 
We compare them with the results in the null case with $a=0$, and observe very obvious signals. Note that if one use the bound $M^{-1/4}$ in \cite{XYZ2013}, then the {estimated} noise would be of order $0.126$, which does not allow one to detect such ``weak" signals. 

For this problem, \cite{BPZ1} proposed to use the largest eigenvalue $\lambda_1(YY^*)$ as a test statistic. But it has the disadvantage that the limiting distribution of $\lambda_1$ depends on the unknown matrices $\Sigma_1$ and $\Sigma_2$, and hence is not asymptotically pivotal. Moreover, it was proved in \cite{YF_separable} that the behavior of $\lambda_1$ in the non-identity $\Sigma_1$ case is similar to the one in the identity case, which is not good for test purpose. On the other hand, our procedure tests the isotropic property of $\Sigma_1$ directly. 

\subsubsection{Eigenvectors of population covariance matrices}\label{section_appl3}

Now we go back to consider the sample covariance matrices $Q_1 = \Sigma^{1/2}XX^* \Sigma^{1/2}$. By Corollary \ref{main_cor}, we know that the VESD $F_{Q_1,\mathbf u}^{(M)}$ converges to $F_{1c,\mathbf u}^{(M)}$, which is defined through the Stieltjes transform (\ref{m1cv}). It is easy to observe that the matrix $\mathbf F_{1c}^{(M)}$ is diagonal in the eigenbasis of $\Sigma$, and the diagonal entries depend on the eigenvalues of $\Sigma$ in an explicit way. This allows one to use the VESD of $Q_1$ to detect the leading eigenvectors (or eigenspaces) of $\Sigma$. More precisely, if $\mathbf u_i$ is the eigenvector of $\Sigma$ with eigenvalue $\sigma_i$, then with (\ref{m1cv}) and the inverse formula we can get that
$$\rho^{(M)}_{1c,\mathbf u_i}(E)= \frac1\pi\lim_{\eta\downarrow 0} \Im\, m^{(M)}_{1c,\mathbf u_i}(E+\ii\eta)= \frac{\rho_{2c}(E)}{E\left(\sigma_i^{-1} + 2 \Re\, m_{2c}(E) + |m_{2c}(E)|^2 \sigma_i\right)},$$
where $E\in \mathbb R$, $\rho^{(M)}_{1c,\mathbf u_i}$ is the density of $F_{1c,\mathbf u_i}^{(M)}$, and we abbreviate $m_{2c}(E)\equiv m_{2c}(E+\ii 0_+)$. Near the right edge $\gamma_1$, we know that $- \sigma_1^{-1} <m_{2c}(E) < 0$  (see \cite[Appendix A]{Anisotropic}). Hence it is easy to see that there exists a constant $c>0$ such that for $\gamma_1 - c < E < \gamma_1$, $\rho^{(M)}_{1c,\mathbf u_i}(E)$ is monotone with respect to $\sigma_i$. In particular, $\rho^{(M)}_{1c,\mathbf u}(E)$ is maximized if $\mathbf u = \mathbf u_1$. 
Thus our results shows that measuring the density (i.e. the slope) of $F_{Q_1,\mathbf u}^{(M)}$ allows one to make some inference on the overlaps between the test vectors and the population eigenvectors corresponding to the leading eigenvalues of $\Sigma$.

In Fig.\,\ref{fig_VESD3}, we give two examples of VESD of spiked covariance matrices. In the simulations, we take $M=1000$ and the entries $\sqrt{N}x_{ij}$ to be $i.i.d.$ Gaussian with mean 0 and variance 1. {One can take the population covariance matrix to be a general positive definite matrix, but for simplicity we assume that it is diagonal by properly rotating the test vectors; see Remark \ref{off_rem} and (\ref{off_rem_eq}).} In Fig.\,\ref{fig_VESD3}(a), we take $N=2M$, and 
\begin{equation*}
\Sigma^{1/2} = {\text{diag}}(\underbrace{1,\cdots, 1}_{0.9M},\underbrace{2,\cdots, 2}_{0.1M}).
\end{equation*}
In Fig.\,\ref{fig_VESD3}(b), we take $N=10M$, and 
\begin{equation*}
\Sigma^{1/2} = {\text{diag}}(\underbrace{1,\cdots, 1}_{0.6M},\underbrace{3,\cdots, 3}_{0.3M},\underbrace{5,\cdots, 5}_{0.1M}).
\end{equation*}
Moreover, we take the following test vectors (up to normalization):
\begin{equation*}
\begin{split}
& \mathbf v_1\propto (\underbrace{1,\cdots, 1}_{M/2},\underbrace{0,\cdots, 0}_{M/2}), \quad \mathbf v_2 \propto  (1,\cdots, 1), \quad \mathbf v_3\propto (\underbrace{0,\cdots, 0}_{M/2},\underbrace{1,\cdots, 1}_{M/2}),  \\
& \mathbf v_4\propto (\underbrace{0,\cdots, 0}_{0.7M},\underbrace{1,\cdots, 1}_{0.3M}),\quad \mathbf v_5\propto (\underbrace{0,\cdots, 0}_{0.9M},\underbrace{1,\cdots, 1}_{0.1M}). 
\end{split}
\end{equation*}
For each choice of $\mathbf v_i$, we take an average over 10 repetitions to get $F^{(M)}_{Q_1,\mathbf v_i}$.

\begin{figure}[htb]
\centering
\subfigure[Two components case]{\includegraphics[width=6.2cm]{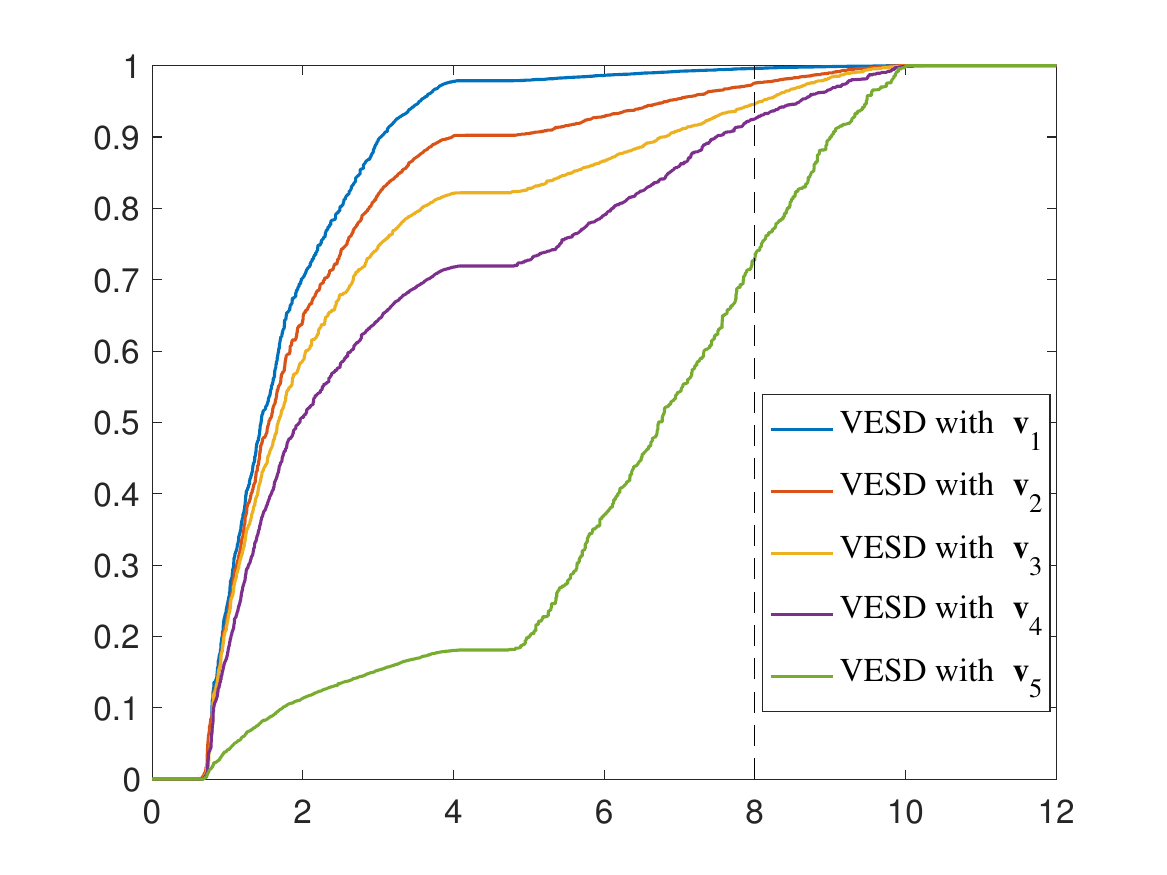}}
\subfigure[Three components case]{\includegraphics[width=6.2cm]{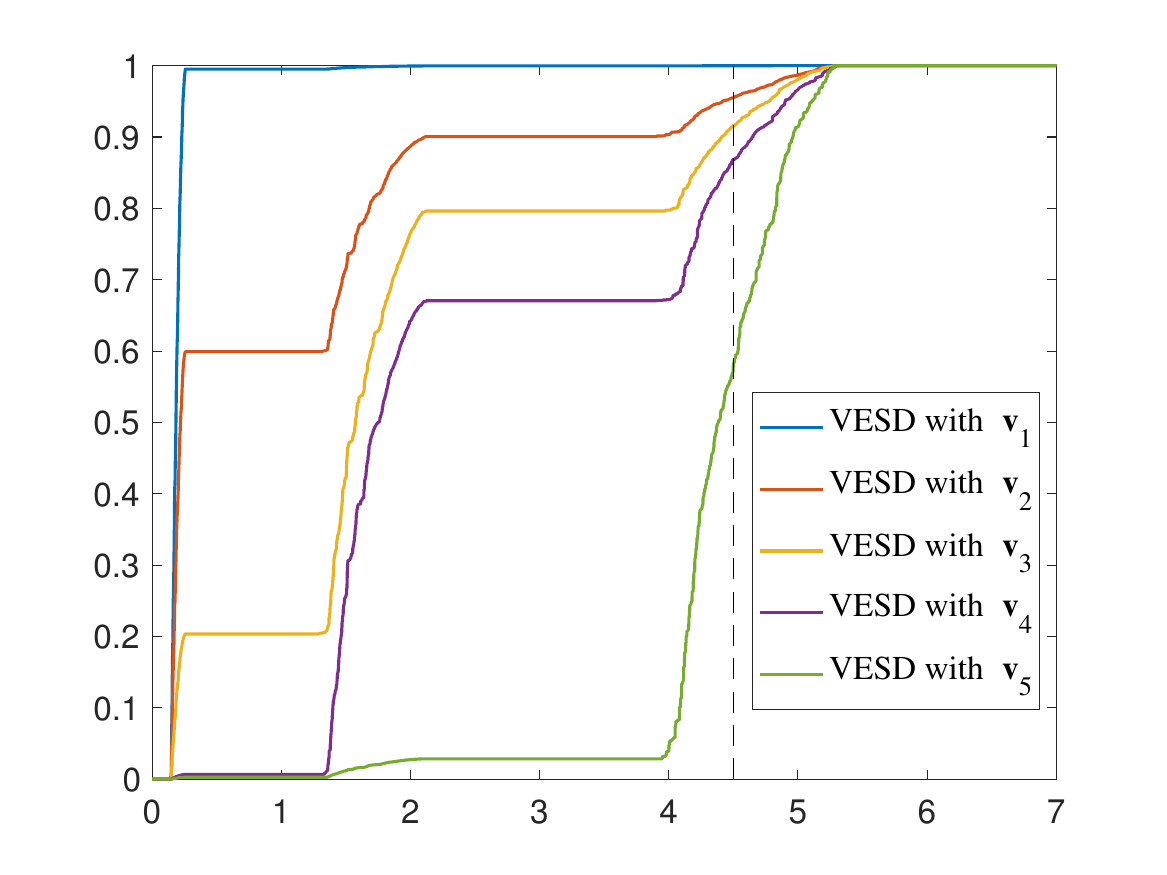}}
\caption{The plots for $F_{Q_1,\mathbf v_i}(x)$ with $M=1000$ and $i=1,\cdots, 5$.}
\label{fig_VESD3}
\end{figure}

Note that the flat parts of the curves in Fig.\,\ref{fig_VESD3} correspond to the gaps between different components of the eigenvalue spectrum of $Q_1$. Hence the spectral densities in Fig.\,\ref{fig_VESD3}(a) and \ref{fig_VESD3}(b) have two and three components, respectively. The rightmost components can be formally regarded as the outlier component  
caused by the large eigenvalues of $\Sigma$. It is easy to see that for $x$ near the right edge (e.g. the $x$ marked by the dashed line), the slope of the VESD $F_{Q_1,\mathbf v_i}(x)$ increases as $i$ changes from 1 to 5. This verifies our previous conclusion, i.e. the density $\rho_{1c,\mathbf v}$ increases if $\mathbf v$ has more overlap with the leading eigenvectors of $\Sigma$. Note that since all the VESD curves reach 1 at the right edge $\gamma_1$, the lower curves have larger densities.  


Here we have only considered examples with diagonal $\Sigma$. However, our results is possible to be applied to more general and complicated sample covariance matrices with nonzero correlations between rows, i.e. non-diagonal population covariance matrix $\mathbf C$ (see Remark \ref{off_rem}). This gives much more insight into future applications of our results in high-dimensional statistical inference. {We also remark that in \cite{Peche}, the overlaps between sample eigenvectors and population eigenvectors are studied through certain functionals that are closely related to VESD (with test vectors being specified to be the population eigenvectors). Based on the results in \cite{Peche}, certain estimator was proposed to estimate the population covariance $\mathbf C$ \cite{RIE}. However, this estimator does not provide much information about the population eigenvectors since it uses the same eigenvectors as the sample covariance matrix $Q_1$. }

\section{Proof of Theorem \ref{main_thm}}\label{main_result}

For definiteness, we will focus on {\it{real}} sample covariance matrices during the proof. However, our proof also applies, after minor changes, to the {\it{complex}} case if we include the extra assumption (\ref{entry_assm2}) or (\ref{entry_assmex}). 

\subsection{Anisotropic local Mar{\v c}enko-Pastur law}

A basic tool for the proof is the Stieltjes transform. For any $z= E+i\eta \in \mathbb C_+$, we define the resolvents (the Green functions) of $Q_1$ and $Q_2$ as
\begin{equation}\label{def_green}
\mathcal G_1(X,z):=(Q_1 - z)^{-1} , \ \ \ \mathcal G_2 (X,z):=(Q_2 - z)^{-1} .
\end{equation}
Then the Stieltjes transforms of the ESD of $Q_{1,2}$ are equal to
\begin{equation*}
m_1(X,z):= {M}^{-1} \mathrm{Tr} \, \mathcal G_1(X,z), \ \ m_2(X,z):= N^{-1}\mathrm{Tr} \, \mathcal G_2(X,z), 
\end{equation*}
and the Stieltjes transforms of $F^{(M)}_{Q_1, \mathbf u}$ and $F^{(N)}_{Q_2, \mathbf v}$ are equal to $\langle \mathbf u, \mathcal G_1(X,z)\mathbf u\rangle$ and $\langle \mathbf v, \mathcal G_2(X,z)\mathbf v\rangle$, respectively. The main goal of this subsection is to establish the following asymptotic estimate for $z\in \mathbb C_+$:
\begin{equation}\label{iso_law}
\langle \mathbf u, \mathcal G_1(X,z)\mathbf u\rangle \approx m_{1c,\mathbf u}(z),  \ \ \ \langle \mathbf v, \mathcal G_2(X,z)\mathbf v\rangle \approx m_{2c}(z).
\end{equation}
By taking the imaginary part, it is easy to see that a control of the Stieltjes transforms $\langle \mathbf u, \mathcal G_1(X,z)\mathbf u\rangle$ and $\langle \mathbf v, \mathcal G_2(X,z)\mathbf v\rangle$ yields a control of the VESD on the scale of order $\Im\, z$ around $E$. An {\it{anisotropic local law}} is an estimate of the form (\ref{iso_law}) for all $\Im\, z \gg N^{-1}$. Such local law was first established in \cite{isotropic1,isotropic,Anisotropic} for sample covariance matrices, assuming that the matrix entries have arbitrarily high moments. In Section \ref{section_proof}, we will finish the proof of Theorem \ref{main_thm} with the (almost) optimal anisotropic local laws for $\mathcal G_1$ and $\mathcal G_2$.

Our anisotropic local law can be stated in a simple and unified fashion using the following $(N+M)\times (N+M)$ self-adjoint matrix $H$:
 \begin{equation}\label{linearize_block}
   H : = \left( {\begin{array}{*{20}c}
   { 0 } & \Sigma^{1/2}X  \\
   {(\Sigma^{1/2}X)^*} & {0}  \\
   \end{array}} \right).
 \end{equation}
We define the resolvent of $H$ as
 \begin{equation}\label{eqn_defG}
 G (X,z):= \left( {\begin{array}{*{20}c}
   { - I_{M\times M}} & \Sigma^{1/2}X  \\
   {(\Sigma^{1/2}X)^*} & { - zI_{N\times N}}  \\
\end{array}} \right)^{-1}, \quad z\in \mathbb C_+ . 
\end{equation}
Using Schur complement formula, it is easy to check that
\begin{equation} \label{green2}
G = \left( {\begin{array}{*{20}c}
   { z\mathcal G_1} & \mathcal G_1 (\Sigma^{1/2}X)  \\
   {(\Sigma^{1/2}X)^*\mathcal G_1} & { \mathcal G_2 }  \\
\end{array}} \right)= \left( {\begin{array}{*{20}c}
   { z\mathcal G_1} & (\Sigma^{1/2}X)\mathcal G_2   \\
   {\mathcal G_2}(\Sigma^{1/2}X)^* & { \mathcal G_2 }  \\
\end{array}} \right).
\end{equation}
Thus a control of $G$ yields directly a control of the resolvents $\mathcal G_1$ and $\mathcal G_2$. For simplicity of notations, we define the index sets
$$\mathcal I_1:=\{1,...,M\}, \ \ \mathcal I_2:=\{M+1,...,M+N\}, \ \ \mathcal I:=\mathcal I_1\cup\mathcal I_2.$$
We shall consistently use the latin letters $i,j\in\mathcal I_1$, greek letters $\mu,\nu\in\mathcal I_2$, and $a,b\in\mathcal I$. Then we label the indices of $X$ as $X= (X_{i\mu}:i\in \mathcal I_1, \mu \in \mathcal I_2).$

We will use the following notion of stochastic domination, which was first introduced in \cite{Average_fluc} and subsequently used in many works on random matrix theory, such as \cite{isotropic,Anisotropic}. 
It simplifies the presentation of the results and their proofs by systematizing statements of the form ``$\xi$ is bounded with high probability by $\zeta$ up to a small power of $N$".

\begin{defn}[Stochastic domination]\label{stoch_domination}
(i) Let
$$\xi=\left(\xi^{(N)}(u):N\in\mathbb N, u\in U^{(N)}\right),\hskip 10pt \zeta=\left(\zeta^{(N)}(u):N\in\mathbb N, u\in U^{(N)}\right)$$
be two families of nonnegative random variables, where $U^{(N)}$ is a possibly $N$-dependent parameter set. We say $\xi$ is stochastically dominated by $\zeta$, uniformly in $u$, if for any (small) $\epsilon>0$ and (large) $D>0$, 
$$\sup_{u\in U^{(N)}}\mathbb P\left[\xi^{(N)}(u)>N^\epsilon\zeta^{(N)}(u)\right]\le N^{-D}$$
for large enough $N\ge N_0(\epsilon, D)$. 

(ii) If $\xi$ is stochastically dominated by $\zeta$, uniformly in $u$, we use the notation $\xi\prec\zeta$. Moreover, if for some complex family $\xi$ we have $|\xi|\prec\zeta$, we also write $\xi \prec \zeta$ or $\xi=O_\prec(\zeta)$.


(iii) We say that an event $\Xi$ holds with high probability if for any constant $D>0$, $\mathbb P(\Xi)\ge 1- N^{-D}$ for large enough $N$.
\end{defn}

The following lemma collects basic properties of stochastic domination, which will be used tacitly throughout the proof .

\begin{lem}[Lemma 3.2 in \cite{isotropic}]\label{lem_stodomin}
(i) Let $\xi$ and $\zeta$ be families of nonnegative random variables. Suppose that $\xi (u,v)\prec \zeta(u,v)$ uniformly in $u\in U$ and $v\in V$. If $|V|\le N^C$ for some constant $C$, then $\sum_{v\in V} \xi(u,v) \prec \sum_{v\in V} \zeta(u,v)$ uniformly in $u$.

(ii) If $0\le \xi_1 (u)\prec \zeta_1(u)$ and $0\le \xi_2 (u)\prec \zeta_2(u)$ uniformly in $u\in U$, then $\xi_1(u)\xi_2(u) \prec \zeta_1(u)\zeta_2(u)$ uniformly in $u\in U$.

(iii) Suppose that $\Psi(u)\ge N^{-C}$ is deterministic and $\xi(u)$ satisfies $\mathbb E\xi(u)^2 \le N^C$ for all $u$. Then if $\xi(u)\prec \Psi(u)$ uniformly in $u$, we have $\mathbb E\xi(u) \prec \Psi(u)$ uniformly in $u$.
\end{lem}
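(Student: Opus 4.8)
The plan is to prove each of the three parts directly from Definition \ref{stoch_domination}, using only elementary union bounds and a single moment estimate, while carefully tracking the quantifiers on $\epsilon$ and $D$ and verifying that uniformity in the parameter is preserved at each step. I treat all the random variables involved as nonnegative; the general (signed or complex) case in part (iii) then follows from $|\mathbb{E}\xi|\le\mathbb{E}|\xi|$ together with Definition \ref{stoch_domination}(ii).

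For part (i), I would fix $\epsilon>0$ and $D>0$, and apply the hypothesis $\xi(u,v)\prec\zeta(u,v)$ with the same $\epsilon$ but with exponent $D+C$ in place of $D$. This produces, uniformly in $u$ and $v$, an exceptional event $\mathcal{E}_{u,v}$ with $\mathbb{P}(\mathcal{E}_{u,v})\le N^{-D-C}$ off which $\xi(u,v)\le N^\epsilon\zeta(u,v)$. Taking the union over $v\in V$, of which there are at most $N^C$, yields an exceptional event of probability at most $N^C\cdot N^{-D-C}=N^{-D}$, and on its complement nonnegativity gives $\sum_{v\in V}\xi(u,v)\le N^\epsilon\sum_{v\in V}\zeta(u,v)$. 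Since the bound $N^{-D-C}$ was uniform in $u$, so is the conclusion. For part (ii), fix $\epsilon,D$ and apply $\xi_i\prec\zeta_i$ with exponent $\epsilon/2$ and with $D+1$ in place of $D$, for $i=1,2$; on the intersection of the two good events, which has probability at least $1-2N^{-D-1}\ge 1-N^{-D}$ for $N$ large, nonnegativity gives $\xi_1\xi_2\le(N^{\epsilon/2}\zeta_1)(N^{\epsilon/2}\zeta_2)=N^\epsilon\zeta_1\zeta_2$, again uniformly in $u$.

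Part (iii) is the only part requiring an actual idea, namely a truncation at the threshold $N^{\epsilon/2}\Psi$ combined with the $L^2$ bound. Given $\epsilon>0$, I would split $\mathbb{E}\xi(u)=\mathbb{E}[\xi(u)\mathbf{1}_{\xi(u)\le N^{\epsilon/2}\Psi(u)}]+\mathbb{E}[\xi(u)\mathbf{1}_{\xi(u)>N^{\epsilon/2}\Psi(u)}]$. The first term is at most $N^{\epsilon/2}\Psi(u)$ by construction. For the second, Cauchy--Schwarz gives $\mathbb{E}[\xi(u)\mathbf{1}_{\xi(u)>N^{\epsilon/2}\Psi(u)}]\le(\mathbb{E}\xi(u)^2)^{1/2}\,\mathbb{P}(\xi(u)>N^{\epsilon/2}\Psi(u))^{1/2}\le N^{C/2}\,N^{-D'/2}$, where I invoke $\xi(u)\prec\Psi(u)$ with exponent $\epsilon/2$ and an arbitrarily large $D'$; choosing $D'\ge 3C$ and using the hypothesis $\Psi(u)\ge N^{-C}$ bounds this term by $N^{-C}\le\Psi(u)$. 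Hence $\mathbb{E}\xi(u)\le 2N^{\epsilon/2}\Psi(u)\le N^\epsilon\Psi(u)$ for $N$ large, uniformly in $u$, and since $\mathbb{E}\xi(u)$ is deterministic this is precisely $\mathbb{E}\xi(u)\prec\Psi(u)$.

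There is no serious obstacle here; the content is entirely in the bookkeeping of constants. The one substantive point to flag is that part (iii) genuinely uses the quantitative hypotheses $\Psi(u)\ge N^{-C}$ and $\mathbb{E}\xi(u)^2\le N^C$ to absorb the tail contribution, and that the polynomial bound $|V|\le N^C$ plays the analogous role in part (i); without these the statements fail. Uniformity in the parameter is inherited in all three parts simply because the exceptional probabilities produced by the hypotheses are themselves uniform in that parameter.
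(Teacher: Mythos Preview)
Your proof is correct and complete. The paper does not actually prove this lemma; it is stated with a citation to \cite{isotropic} and used as a black box throughout. Your arguments---union bounds for (i) and (ii), and truncation plus Cauchy--Schwarz for (iii)---are exactly the standard ones, and your tracking of the uniformity in the parameter and the role of the polynomial hypotheses $|V|\le N^C$, $\Psi(u)\ge N^{-C}$, $\mathbb{E}\xi(u)^2\le N^C$ is accurate.
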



Throughout the rest of this paper, we will consistently use the notation $z=E+i\eta$ for the spectral parameter $z$. In the following proof, we always assume that $z$ lies in the spectral domain
\begin{equation}\label{eq_domainD}
\mathbf D(\omega,N) := \{z\in \mathbb C_+:  \omega \le E \le 2\gamma_1, N^{-1+\omega} \le \eta\le \omega^{-1}\},
\end{equation}
for some small constant $\omega>0$, unless otherwise indicated. 
Recall the condition (\ref{regular1}), we can take $\omega$ to be sufficiently small such that $\omega\le \gamma_K/2$. Define the distance to the spectral edges as
$\kappa:= \min_{1\le k \le 2L}\vert E -a_k\vert.$
Then we have the following estimates for $m_{2c}$:
\begin{equation}\label{Immc}
\vert m_{2c}(z) \vert \sim 1, \ \  \Im \, m_{2c}(z) \sim \begin{cases}
    {\eta}/{\sqrt{\kappa+\eta}}, & \text{ if } E \notin \text{supp}\, \rho_{2c}\\
    \sqrt{\kappa+\eta}, & \text{ if } E \in \text{supp}\, \rho_{2c}\\
  \end{cases},
\end{equation}
and 
\begin{equation}\label{Piii}
\max_{i\in \mathcal I_1} \vert (1 + m_{2c}(z)\sigma_i)^{-1} \vert = O(1).
\end{equation}
for $z\in \mathbf D$. The reader can refer to \cite[Appendix A]{Anisotropic} for the proof.

We define the deterministic limit
\begin{equation}\label{defn_pi}
\Pi (z): = \left( {\begin{array}{*{20}c}
   { -(1+m_{2c}(z)\Sigma)^{-1}} & 0  \\
   0 & {m_{2c}(z)I_{N\times N}}  \\
\end{array}} \right) ,
\end{equation}
and the control parameter
\begin{equation}\label{eq_defpsi}
\Psi (z):= \sqrt {\frac{\Im \, m_{2c}(z)}{{N\eta }} } + \frac{1}{N\eta}.
\end{equation}
Note that by (\ref{Immc}) and (\ref{Piii}), we have for $z\in \mathbf D$,
\begin{equation}\label{psi12}
\|\Pi\|=O(1), \quad \Psi \gtrsim N^{-1/2} , \quad \Psi^2 \lesssim (N\eta)^{-1}.
\end{equation}

\begin{defn}[Bounded support condition] \label{defn_support}
We say a random matrix $X$ satisfies the {\it{bounded support condition}} with $q$, if
\begin{equation}
\max_{i\in \mathcal I_1, \mu \in \mathcal I_2}\vert X_{i\mu}\vert \prec q. \label{eq_support}
\end{equation}
Here $q\equiv q(N)$ is a deterministic parameter and usually satisfies $ N^{-{1}/{2}} \leq q \leq N^{- \phi} $ for some (small) constant $\phi>0$. Whenever (\ref{eq_support}) holds, we say that $X$ has support $q$. Obviously, if the entries of $X$ satisfy (\ref{size_condition}), then $X$ trivially satisfies the bounded support condition with $q=N^{-\phi}$.
\end{defn}

Now we are ready to state the local laws for the resolvent $G(X,z)$. Here and throughout the following, whenever we say ``uniformly in any deterministic vectors", we mean that ``uniformly in any deterministic vectors belonging to some fixed set of cardinality $N^{O(1)}$". 

\begin{thm}[Local MP law]\label{lem_EG0}
Suppose $d_N$, $X$ and $\Sigma$ satisfy the Assumption \ref{main_assm}. Suppose $X$ is real and satisfies (\ref{eq_support}) with $q\le N^{-\phi}$ for some constant $\phi>0$.
Then the following estimates hold for $z\in \mathbf D$:

(1) the averaged local law:
\begin{equation}\label{aver_law}
| m_2(X,z)-m_{2c}(z) | + \Big|M^{-1}\sum_{i\in \mathcal I_1} \sigma_i (G_{ii} - \Pi_{ii}) \Big|\prec ({N\eta})^{-1};
\end{equation}

(2) the anisotropic local law: for deterministic unit vectors $\mathbf u, \mathbf v \in \mathbb C^{\mathcal I}$,
\begin{equation}\label{aniso_law}
\left| \langle \mathbf u, G(X,z) \mathbf v\rangle - \langle \mathbf u, \Pi (z)\mathbf v\rangle \right| \prec q + \Psi(z);
\end{equation}

(3) for deterministic unit vectors $\mathbf u,\mathbf v \in \mathbb C^{\mathcal I_1}$ or $\mathbf u,\mathbf v \in \mathbb C^{\mathcal I_2}$,
\begin{equation}\label{ANISO_LAW2}
\left| \langle \mathbf u, G(X,z) \mathbf v\rangle - \langle \mathbf u, \Pi (z)\mathbf v\rangle \right| \prec q^2 +(N\eta)^{-1/2}.
\end{equation}
All of the above estimates are uniform in the spectral parameter $z$ and the deterministic vectors $\mathbf u, \mathbf v$.
\end{thm}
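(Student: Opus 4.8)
\textbf{Proof strategy for Theorem \ref{lem_EG0}.} The plan is to establish the three estimates via the standard self-consistent perturbation approach for resolvents of linearized sample covariance matrices, following the scheme developed in \cite{isotropic,Anisotropic}, but tracking the dependence on the support size $q$ carefully so that the final bounds involve the right powers of $q$ rather than assuming $q\le N^{-1/2+o(1)}$. First I would derive the basic self-consistent equation for $G=G(X,z)$. Using the Schur complement formula on the block matrix in (\ref{eqn_defG}), one expresses the diagonal blocks $G_{ii}$ ($i\in\mathcal I_1$) and $G_{\mu\mu}$ ($\mu\in\mathcal I_2$) in terms of the minors obtained by removing the corresponding row and column. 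Expanding the relevant quadratic forms in $X$ and using large deviation estimates for quadratic forms of independent entries with support $q$ and variance $N^{-1}$, one obtains that each such quadratic form concentrates around its expectation with fluctuation $O_\prec(q + (\Im m_{2c}/(N\eta))^{1/2})$ — this is precisely where the $q$ appears, since the worst off-diagonal contribution to a quadratic form $\sum_{i}\overline{X_{i\mu}}(\cdot)_{ij}X_{j\nu}$ is controlled by the maximum entry size $q$.

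Second, with the self-consistent equation in hand, I would run a \emph{continuity (bootstrap) argument} in the imaginary part $\eta$: starting from large $\eta\sim\omega^{-1}$ where the estimates are trivial by the crude bound $\|G\|\le\eta^{-1}$ (together with a Lipschitz continuity of $G$ in $z$ on a fine grid), one propagates the local law down to $\eta\ge N^{-1+\omega}$. The stability of the self-consistent equation (\ref{deformed_MP21}) — which follows from the regularity assumptions on the edges and bulk components of $\rho_{2c}$ in Assumption \ref{main_assm}(iii), via the estimates (\ref{Immc}) and (\ref{Piii}) — converts the smallness of the self-consistent \emph{error} into smallness of $m_2 - m_{2c}$, giving the averaged law (\ref{aver_law}) with the sharp rate $(N\eta)^{-1}$ by the usual fluctuation averaging mechanism (the off-diagonal resolvent entries are on average much smaller than their typical size, so averaging $\sum_i\sigma_i(G_{ii}-\Pi_{ii})$ gains a factor $\Psi$). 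For the anisotropic law (\ref{aniso_law}), I would write $\langle\mathbf u,(G-\Pi)\mathbf v\rangle$ as a sum over entries $\sum_{a,b}\bar u_a (G_{ab}-\Pi_{ab})v_b$, control the diagonal part using (\ref{aver_law}) plus the entrywise bound on $G_{aa}-\Pi_{aa}$, and control the off-diagonal part by a further application of large deviation bounds for the bilinear forms $\langle\mathbf u, G\text{(minor)}X\cdots\rangle$, yielding the $q+\Psi$ rate.

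Third, for the refined bound (\ref{ANISO_LAW2}) when $\mathbf u,\mathbf v$ lie entirely in $\mathcal I_1$ or entirely in $\mathcal I_2$, I would exploit the extra cancellation available because the linear-in-$X$ cross terms (which are the source of the $q$-sized error in (\ref{aniso_law})) couple the $\mathcal I_1$ and $\mathcal I_2$ blocks; when the test vectors are supported on a single block, these terms enter only at second order, so their contribution is $O_\prec(q^2)$, and the remaining genuinely diagonal-block fluctuations are of size $(N\eta)^{-1/2}$, giving the claimed $q^2+(N\eta)^{-1/2}$. I expect the \textbf{main obstacle} to be obtaining the \emph{sharp} $q$-dependence rather than just some polynomial-in-$q$ bound: this requires carefully isolating, in each expansion step, which error terms scale like $q$ versus $q^2$ versus $\Psi$, and in particular establishing the fluctuation averaging lemma in a form that is uniform in $q$ down to $q=N^{-\phi}$ with small $\phi$ — the bookkeeping of resolvent-expansion terms (each carrying additional factors of $q$ or $\Psi$) and verifying that the recursive moment estimates close is the technically delicate part. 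Since the detailed proof is lengthy, it is deferred to the supplementary material.
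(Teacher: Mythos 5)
Your proposal takes essentially the same route as the paper: a self-consistent/bootstrap derivation of the averaged and entrywise laws, a moment-method upgrade to the anisotropic law following \cite{isotropic} or \cite{XYY}, and then the block-structure observation that the $q$-sized error is tied to the off-diagonal $\mathcal I_1\times\mathcal I_2$ blocks of $G$ while same-block test vectors see only $q^2$-sized fluctuations. The paper in fact imports the averaged law (\ref{aver_law}) and the entrywise bound $|G_{ab}-\Pi_{ab}|\prec q+\Psi$ wholesale from \cite{NeceSuff_sample} rather than rederiving them, and identifies (\ref{ANISO_LAW2}) as the genuinely new piece; your outline reproduces this division of labour.

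One imprecision worth correcting. In your first step you attribute the $O(q)$ error to the quadratic forms $\sum_{i,j}\overline{X_{i\mu}}(\cdot)_{ij}X_{j\nu}$ appearing in the Schur-complement self-consistent equations, claiming a fluctuation $O_\prec\bigl(q+\sqrt{\Im m_{2c}/(N\eta)}\bigr)$. Those quadratic forms carry \emph{two} $X$ factors, so the support condition gives a Bernstein-type correction of order $q^2$, not $q$; i.e., the quantities $(YG^{(i)}Y^*)_{ii}$, $(Y^*G^{(\mu)}Y)_{\mu\mu}$ already concentrate at scale $q^2+\Psi$. The single-power $q$ in (\ref{entry_law}) and (\ref{aniso_law}) originates instead from the off-block entries $G_{i\mu}=(\mathcal G_1 Y)_{i\mu}$: the rank-one resolvent expansion isolates a leading term $(\mathcal G_1^{(\mu)})_{ii}\,\sigma_i^{1/2}X_{i\mu}$, a genuinely linear-in-$X$ contribution of size $q$. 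Your third step says exactly this (``linear-in-$X$ cross terms couple the blocks''), so the two halves of your sketch are consistent with each other; you just misattribute the $q$ in the first step. Getting this bookkeeping exactly right — tracking which expansion terms cost $q$, which cost $q^2$, and which cost $\Psi$, and verifying that the recursive high-moment estimates close at the rate $q^2+(N\eta)^{-1/2}$ for same-block test vectors — is, as you correctly anticipate, the technically delicate part and is where the bulk of the supplementary-material argument lies.
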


The proof for Theorem \ref{lem_EG0} will be given in the Supplementary material \ref{chap_supp}. Here we make some brief comments on it.
If we assume (\ref{entry_assm}) (instead of (\ref{entry_assm0}) and (\ref{entry_assm1})) and $q=N^{-1/2}$, then (\ref{aver_law}) and (\ref{aniso_law}) have been proved in \cite{Anisotropic}. If we have (\ref{entry_assm}) and $q\le N^{-\phi}$, then it was proved in Lemma 3.11 and Theorem 3.14 of \cite{NeceSuff_sample} that the averaged local law (\ref{aver_law}) and the entrywise local law
\begin{equation}\label{entry_law}
\max_{a,b\in \mathcal I}\left|G_{ab}(X,z) - \Pi_{ab}(z)\right| \prec q+\Psi(z)
\end{equation}
hold uniformly in $z\in \mathbf D$. 
With (\ref{entry_law}) and the moment assumption (\ref{conditionA3}), one can repeat the arguments in \cite[Section 5]{isotropic} or \cite[Section 5]{XYY} to get the anisotropic local law (\ref{aniso_law}). The main novelty of this theorem is the bound (\ref{ANISO_LAW2}), which is the main focus in the proof in supplementary material. 
Finally, if the variance assumption in (\ref{entry_assm}) is relaxed to the one in (\ref{entry_assm1}), we can repeat the previous arguments to get the desired estimates (\ref{aver_law})-(\ref{ANISO_LAW2}). In fact, it is easy to check that the $O(N^{-2-c_0})$ term leads to a negligible error at each step, and the whole proof remains unchanged. The relaxation of the mean zero assumption in (\ref{entry_assm}) to the assumption (\ref{entry_assm0}) can be handled with the centralization Lemma \ref{comp_claim}.

After taking expectation, we have the following crucial improvement from (\ref{ANISO_LAW2}) to (\ref{Eaniso_law0}), which is the main reason why we can improve the bound in \cite{XYZ2013} to the almost optimal one in (\ref{boundE}). In fact, the leading order terms of $(\langle \mathbf u, \mathcal G_1 \mathbf u\rangle - m_{1c,\mathbf u})$ and $(\langle \mathbf v, \mathcal G_2 \mathbf v\rangle - m_{2c})$ vanish after taking expectation, and hence leads to a bound that is one order smaller than the one in (\ref{ANISO_LAW2}). The proof of Theorem \ref{lem_EG} will be given in Sections \ref{proof_lem_EG}, which constitutes the main novelty of this paper. 

\begin{thm}\label{lem_EG}
Suppose the assumptions in Theorem \ref{lem_EG0} hold. Then we have 
\begin{equation}\label{Eaniso_law0}
\left| \mathbb E\langle \mathbf u, G(X,z) \mathbf v\rangle - \langle \mathbf u, \Pi (z)\mathbf v\rangle \right| \prec q^4 + (N\eta)^{-1}
\end{equation}
uniformly in $z\in \mathbf D$ and deterministic unit vectors $\mathbf u,\mathbf v \in \mathbb C^{\mathcal I_1}$ or $\mathbf u,\mathbf v \in \mathbb C^{\mathcal I_2}$.
\end{thm}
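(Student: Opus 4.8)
The plan is to prove Theorem~\ref{lem_EG} by a careful second-order expansion of $\mathbb E\langle \mathbf u, G \mathbf v\rangle$ and to exploit cancellations that become visible only after taking expectation. Starting from the self-consistent equation for $G$ (obtained via the Schur complement / resolvent expansion with respect to the columns of $\Sigma^{1/2}X$), one writes $G = \Pi + \Pi \mathcal E G$, where $\mathcal E$ is a fluctuating ``error matrix'' built from the off-diagonal randomness and the diagonal deviations $Z_\mu := \langle \mathbf x_\mu, \mathcal G_1^{(\mu)} \mathbf x_\mu\rangle - \mathbb E[\cdots]$ type quantities. Iterating this identity once more gives $\langle \mathbf u, G \mathbf v\rangle = \langle \mathbf u, \Pi \mathbf v\rangle + \langle \mathbf u, \Pi \mathcal E \Pi \mathbf v\rangle + \langle \mathbf u, \Pi \mathcal E \Pi \mathcal E G \mathbf v\rangle$. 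The first fluctuation term $\langle \mathbf u, \Pi \mathcal E \Pi \mathbf v\rangle$ has mean zero (by $\mathbb E x_{i\mu}=0$ up to the negligible $N^{-2-c_0}$ corrections), so the whole game is to show that $\mathbb E\langle \mathbf u, \Pi \mathcal E \Pi \mathcal E G \mathbf v\rangle$ is bounded by $q^4 + (N\eta)^{-1}$.

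**Key steps in order.** First I would set up the fluctuation averaging / cumulant-expansion machinery: replace each $G$ appearing inside an expectation by its value with one index removed (minor), control the difference using Theorem~\ref{lem_EG0} (in particular the entrywise law (\ref{entry_law}) and the two-vector bound (\ref{ANISO_LAW2})), and track which terms survive after $\mathbb E$. Second, I would perform the cumulant expansion in the entries $x_{i\mu}$: the second-cumulant (variance) terms reproduce the self-consistent structure and, when combined with the defining equation (\ref{deformed_MP21}) for $m_{2c}$, cancel the leading $\Psi$-sized contribution, leaving a remainder governed by $\Im m_{2c}/(N\eta) \lesssim (N\eta)^{-1}$; the fourth-cumulant terms carry an explicit factor $\mathbb E|x_{i\mu}|^4 = O(N^{-2})$ but involve sums that, crudely bounded using $\max_{i\mu}|x_{i\mu}|\prec q$ and the entrywise law, produce the $q^4$ contribution; the third and $\ge 5$-th cumulant terms are shown to be lower order by the same size counting together with $q\le N^{-\phi}$. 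Third, I would handle the ``self-improving'' nature of the estimate: the bound on $\mathbb E\langle \mathbf u, G\mathbf v\rangle - \langle \mathbf u,\Pi\mathbf v\rangle$ feeds back into the error matrix estimates, so a bootstrap (iteration on $\eta$ from large to small, or a fixed-point argument in the size of the error) is needed to close at the optimal scale $\eta \ge N^{-1+\omega}$. Finally, the mean-zero and unit-variance relaxations (\ref{entry_assm0})--(\ref{entry_assm1}) and the complex case are dealt with exactly as indicated in the text, via the centralization Lemma~\ref{comp_claim} and minor bookkeeping.

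**The main obstacle.** The hard part will be extracting the exact cancellation in the variance (second-cumulant) terms: naively each such term is of size $\Psi \sim (N\eta)^{-1/2}$, which is far from the target $(N\eta)^{-1}$, and one only gets the square-root improvement after recognizing that the ``diagonal'' part of these terms is precisely what the self-consistent equation for $\Pi$ kills, so that only the genuinely off-diagonal / higher-fluctuation remainders survive. Making this rigorous uniformly down to $\eta \sim N^{-1+\omega}$, while simultaneously controlling the many cross terms generated by the double application of $G = \Pi + \Pi\mathcal E G$ and by the minor-removal steps, is the technical core; this is where the bulk and edge regularity of $\rho_{2c}$ (Assumption~\ref{main_assm}(iii), via the bounds (\ref{Immc})--(\ref{Piii})) and the stability of (\ref{deformed_MP21}) enter decisively. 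The appearance of $q^4$ rather than $q^2$ is the secondary subtlety: it reflects that after taking expectation the linear-in-$\mathcal E$ terms vanish, so the leading support-dependent error is genuinely quadratic in the fourth-cumulant scale, i.e. of order $(\text{support})^4$.
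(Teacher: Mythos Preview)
Your approach is genuinely different from the paper's, and while a cumulant-expansion route can in principle reach such bounds, the specific mechanism you describe contains two real gaps.

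First, the claim that ``the second-cumulant (variance) terms reproduce the self-consistent structure and \ldots\ cancel the leading $\Psi$-sized contribution, leaving a remainder governed by $\Im m_{2c}/(N\eta)$'' is the argument for the \emph{averaged} quantity $m_2-m_{2c}$, not for the isotropic one. The scalar $\langle\mathbf v,\mathcal G_2\mathbf v\rangle$ does not satisfy a closed self-consistent equation; the deterministic equation (\ref{deformed_MP21}) only governs the normalized trace. So there is no structural cancellation of the $\Psi$-sized variance term at the level of $\langle\mathbf v,G\mathbf v\rangle$, and your linear term $\langle\mathbf u,\Pi\mathcal E\Pi\mathbf v\rangle$ is not mean zero once $\mathcal E$ is written out honestly (it contains a deterministic diagonal piece coming from $\Pi^{-1}-G^{-1}$). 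The paper's mechanism is different: it splits $\sum_{\mu,\nu}\bar v_\mu v_\nu\mathbb E G_{\mu\nu}$ into diagonal and off-diagonal parts. For the diagonal, $\mathbb E G_{\mu\mu}-m_{2c}=-m_{2c}^2\mathbb E Z_\mu+O_\prec(\Phi^2)=O_\prec(\Phi^2)$ because the conditional-fluctuation variable $Z_\mu$ has mean zero by construction. For the off-diagonal, the paper proves the much stronger entrywise bound $|\mathbb E G_{\mu\nu}|\prec N^{-1}\Phi^2$ for $\mu\ne\nu$, which is what survives the sum $\|\mathbf v\|_1^2\le N$. That extra $N^{-1}$ comes from a parity observation: after a systematic resolvent (minor) expansion into monomials in the variables $S_{\alpha\beta}=(Y^*G^{(\mu\nu)}Y)_{\alpha\beta}$, every monomial has an \emph{odd} number of $X_{\ast\mu}$ factors and an odd number of $X_{\ast\nu}$ factors, forcing at least one triple coincidence on each side and hence the loss of a free index. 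Your proposal has no analogue of this step.

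Second, your attribution of the $q^4$ to fourth cumulants is off. The fourth cumulant is $O(N^{-2})$ by (\ref{conditionA3}) and does not produce $q^4$. In the paper, $q^4$ arises simply as $\Phi^2$ with $\Phi=q^2+(N\eta)^{-1/2}$, the a priori isotropic bound (\ref{ANISO_LAW2}) from Theorem~\ref{lem_EG0}; taking expectation kills the order-$\Phi$ term and leaves $\Phi^2\sim q^4+(N\eta)^{-1}$. No bootstrap in $\eta$ is needed either: the argument is a single pass that uses (\ref{ANISO_LAW2}) and (\ref{aver_law}) as black boxes. If you want to salvage the cumulant route, you would need to (a) replace the self-consistency cancellation by an argument that genuinely exploits $\mathbb E x_{i\mu}=0$ at the level of the isotropic quadratic form, and (b) explain how the off-diagonal contributions acquire the extra $N^{-1}$; the paper does both via the minor expansion and the string/graph combinatorics of Sections~\ref{subsection_exp}--\ref{graph}.
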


If $q=N^{-1/4}$, then (\ref{ANISO_LAW2}) and (\ref{Eaniso_law0}) already give that
\begin{align*}
& \left| \langle \mathbf u, \mathcal G_1\mathbf u\rangle - m_{1c,\mathbf u} \right| + \left| \langle \mathbf v, \mathcal G_2 \mathbf v\rangle - m_{2c} \right|  \prec (N\eta)^{-1/2},\\
& \left|\mathbb E \langle \mathbf u, \mathcal G_1\mathbf u\rangle - m_{1c,\mathbf u} \right| + \left| \mathbb E\langle \mathbf v, \mathcal G_2 \mathbf v\rangle - m_{2c} \right|  \prec (N\eta)^{-1},
\end{align*}
which are sufficient to conclude Theorem \ref{main_thm}. However, we find that the second bound on the expected VESD is still valid under a much weaker support assumption. More specifically, we have the following theorem, whose proof will be given in the supplementary material.

\begin{thm} \label{thm_large} 
Suppose the assumptions in Theorem \ref{lem_EG0} hold. Then we have
\begin{equation}\label{Eaniso_law}
\left| \mathbb E\langle \mathbf u, G(X,z) \mathbf v\rangle - \langle \mathbf u, \Pi (z)\mathbf v\rangle \right|\prec (N\eta)^{-1},
\end{equation}
uniformly in $z\in \mathbf D$ and deterministic unit vectors $\mathbf u,\mathbf v \in \mathbb C^{\mathcal I_1}$ or $\mathbf u,\mathbf v \in \mathbb C^{\mathcal I_2}$.
\end{thm}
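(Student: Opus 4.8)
\textbf{Proof proposal for Theorem \ref{thm_large}.}

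The plan is to prove the bound \eqref{Eaniso_law} on the expected resolvent entries under only the bounded support condition \eqref{eq_support} with $q\le N^{-\phi}$, without requiring the stronger $q^4$ gain that Theorem \ref{lem_EG} exploits. The idea is to combine a self-consistent (fixed-point) argument for $\mathbb E G$ with the a priori control coming from Theorem \ref{lem_EG0}. First I would fix $z\in\mathbf D$ and work with the $(N+M)\times(N+M)$ resolvent $G(X,z)$ of \eqref{eqn_defG}, using the self-adjoint linearization $H$ so that all the entrywise, averaged, and anisotropic local laws \eqref{aver_law}--\eqref{ANISO_LAW2} are available as inputs. From the resolvent expansion (Schur complement / minor expansion with respect to a single row/column of $H$), one writes $G_{ab}-\Pi_{ab}$ as a polynomial in the fluctuating quantities $X_{i\mu}$ and the off-diagonal Green function entries, plus a ``self-consistent'' term proportional to $(\mathbb E G_{ab}-\Pi_{ab})$ times a coefficient of size $O(1)$ bounded away from $1$ (this is where the edge-regularity condition \eqref{regular1}, which guarantees $\min_i|1+m_{2c}\sigma_i|\ge\tau$ and hence the stability of the self-consistent equation, enters). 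Taking expectation kills all the degree-one terms in $X$, and the key observation is that each remaining term, after expectation, is controlled by $(N\eta)^{-1}$ rather than $q+\Psi$.

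The main steps, in order, would be: (1) write down the one-step expansion identity for $G_{ab}$ in terms of $G^{(\mathsf{row})}$ and the $X$-entries of that row, isolating the term linear in the deterministic part and the genuinely fluctuating remainder; (2) apply the fluctuation averaging / cumulant expansion machinery (the ``$\mathbb E\xi\prec\Psi$'' principle of Lemma \ref{lem_stodomin}(iii) combined with fluctuation averaging as in \cite{Average_fluc,isotropic}) to show that $\mathbb E$ of each product of two or more centered factors contributes only $O_\prec((N\eta)^{-1})$, using the moment bound \eqref{conditionA3} to absorb the fourth-moment terms and using \eqref{ANISO_LAW2} as the a priori bound that feeds back into the estimate; (3) close the resulting approximate self-consistent equation $\mathbb E\langle\mathbf u,G\mathbf v\rangle - \langle\mathbf u,\Pi\mathbf v\rangle = (\text{stable coefficient})\cdot(\mathbb E\langle\mathbf u,G\mathbf v\rangle - \langle\mathbf u,\Pi\mathbf v\rangle) + O_\prec((N\eta)^{-1})$ by inverting the stability operator, whose invertibility with $O(1)$ norm on $\mathbf D$ follows from \eqref{Piii} and the regularity assumptions; (4) finally handle the nonzero-mean relaxation \eqref{entry_assm0} by the centralization Lemma \ref{comp_claim}, and note that the $O(N^{-2-c_0})$ perturbations in \eqref{entry_assm1}--\eqref{entry_assmex} are negligible at every step. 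The averaged-trace version (the $m_2$ piece) is the special case $\mathbf u=\mathbf v=$ coordinate vectors summed, and is in fact easier since it already appears in \eqref{aver_law}.

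The conceptual point making this work under the weak support assumption is that, although the \emph{typical fluctuation} of $\langle\mathbf u,G\mathbf v\rangle$ is of size $q+\Psi$ (with the $q$ coming from a single large entry $X_{i\mu}$), this large-deviation contribution is \emph{linear} in the centered entries and therefore \emph{vanishes in expectation}; what survives are only even-degree or higher-cumulant terms, each of which carries an extra factor of $\Psi$ or $(N\eta)^{-1}$, so the expected error lands at $(N\eta)^{-1}$ regardless of how large $q$ is (as long as $q\le N^{-\phi}$ to keep the expansion convergent and the a priori bounds nontrivial).

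I expect the main obstacle to be step (2): organizing the cumulant/fluctuation-averaging expansion so that \emph{every} higher-order term is genuinely bounded by $(N\eta)^{-1}$ and not merely by $q\Psi$ or $\Psi^2$, which for $q$ as large as $N^{-\phi}$ could be much worse than $(N\eta)^{-1}$ near the edge where $\eta$ is small and $\Psi\sim(N\eta)^{-1/2}$. The resolution is that the dangerous terms are precisely those where a bare $X_{i\mu}$ is not paired, and these die under $\mathbb E$; a careful bookkeeping of which indices are summed (gaining factors of $N^{-1}$) versus which are ``stuck'' is needed, together with repeated use of the Ward identity $\sum_b|G_{ab}|^2 = \eta^{-1}\Im G_{aa}$ to convert sums of squared off-diagonal entries into $\Im m/\eta$, i.e. into powers of $\Psi$. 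This is the same structural mechanism as in \cite{isotropic,Anisotropic}, but it must be pushed one order further and tracked uniformly in $q$ up to $N^{-\phi}$; the edge regularity \eqref{regular1} is what prevents the stability constant from degenerating and spoiling the closure in step (3).
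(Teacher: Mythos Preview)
Your outline is essentially the strategy the paper already executes in Section~\ref{proof_lem_EG} to prove the \emph{weaker} Theorem~\ref{lem_EG}, and that argument yields $q^4+(N\eta)^{-1}$, not $(N\eta)^{-1}$. The mechanism ``expand via Schur complement, take expectation so that odd-in-$X$ terms vanish, bound the even-in-$X$ survivors by Ward identity'' is exactly what produces \eqref{diagonal_iso} and \eqref{off_small}; the surviving second-order terms there are controlled only by $\Phi^2$ with $\Phi=q^2+(N\eta)^{-1/2}$, because the a~priori input on off-diagonal Green function entries and on the fluctuation $Z_\mu$ is $\Phi$ from \eqref{ANISO_LAW2}, and two such factors give $q^4$. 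Your sentence ``each product of two or more centered factors contributes only $O_\prec((N\eta)^{-1})$'' is therefore precisely the gap separating Theorem~\ref{thm_large} from Theorem~\ref{lem_EG}, and nothing in your steps (1)--(2) establishes it: the unpaired-$X$ observation kills the first-order term, not the $q^4$ sitting in the second-order remainder.

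Step~(3) also does not work as written. There is no closed fixed-point equation of the form $x = c\,x + \text{error}$ for the anisotropic quantity $\mathbb E\langle\mathbf u,G\mathbf v\rangle$: identity \eqref{resolvent2} relates $G_{\mu\mu}$ to the \emph{averaged} trace $N^{-1}\sum_i\sigma_i G_{ii}^{(\mu)}$, and \eqref{resolvent4} relates $G_{\mu\nu}$ to entries of $G^{(\mu\nu)}$ with different index structure; neither loops back to the same anisotropic quadratic form. The stability input \eqref{Piii} governs the scalar self-consistent equation \eqref{deformed_MP21}, not an anisotropic one.

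The paper defers the proof of Theorem~\ref{thm_large} to the supplement, so its exact route is not visible in the main text. But given that Theorem~\ref{lem_EG} already delivers the target bound once $q\le N^{-1/4}$, and given the paper's reliance on \cite{NeceSuff_sample,LY} for the large-support regime, the natural mechanism is a Green function comparison: build a reference matrix $\tilde X$ with support $N^{-1/4}$ and matching first four moments, apply Theorem~\ref{lem_EG} to $\tilde X$, and bound $|\mathbb E\langle\mathbf u,G(X)\mathbf v\rangle-\mathbb E\langle\mathbf u,G(\tilde X)\mathbf v\rangle|$ by a Lindeberg replacement in which only fifth- and higher-moment differences survive, each of size $O(q^{k-4}N^{-2})$ by \eqref{conditionA3} and the support bound. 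That is where the $q$-dependence is actually eliminated, and it is a different mechanism from the direct expansion you propose.
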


As a corollary of (\ref{aver_law}), we have the following rigidity result for the eigenvalues. The reader can refer to \cite[Theorem 3.12]{Anisotropic} for the proof. Recall the notations in (\ref{Nk}) and (\ref{gammaj}).

\begin{thm}[Rigidity of eigenvalues] \label{thm_largerigidity}
Suppose Theorem \ref{lem_EG0} and the regularity condition (\ref{regular1}) hold. 
Then for $\gamma_j \in [a_{2k},a_{2k-1}]$, we have
\begin{equation}\label{rigidity2} 
| \lambda_j - \gamma_j| \prec [(N_{2k}+1-j)\wedge (j+1-N_{2k-1})]^{-1/3}N^{-{2}/{3}}.
\end{equation}
\end{thm}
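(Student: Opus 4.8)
The plan is to obtain the rigidity bound (\ref{rigidity2}) from the local laws of Theorem \ref{lem_EG0} by the standard two-step route: first translate the Stieltjes-transform estimate (\ref{aver_law}) into a bound on the empirical counting function of $Q_2$, and then invert that bound using the regular square-root behaviour of $\rho_{2c}$ at the edges. Write $\mathfrak n_N(E):=N^{-1}\#\{j:\lambda_j\le E\}$ for the empirical counting function of $Q_2$; its deterministic limit is $n_{2c}:=F_{2c}$, which by Lemma \ref{Structure_lem} satisfies $N\big(n_{2c}(a_{2k-1})-n_{2c}(a_{2k})\big)=N_{2k}-N_{2k-1}\in\mathbb N$.

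For the first step, fix $z=E+\ii\eta$ with $E$ in an $\omega$-neighbourhood of ${\rm supp}\,\rho_{2c}$ and $N^{-1+\omega}\le\eta\le\omega^{-1}$, so that $z\in\mathbf D$. From (\ref{aver_law}) one has $\Im m_2(X,z)=\Im m_{2c}(z)+O_\prec\big((N\eta)^{-1}\big)$. Applying the Helffer--Sj{\"o}strand formula to a smoothed version of $\mathbf 1_{(-\infty,E]}$ and integrating this estimate over horizontal lines (the standard argument, cf.\ \cite[Section 7]{Anisotropic}) yields $|\mathfrak n_N(E)-n_{2c}(E)|\prec N^{-1}$ uniformly in such $E$. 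Two further inputs are needed: (a) with high probability there are no eigenvalues in the gaps between the bulk components $[a_{2k},a_{2k-1}]$, in $(a_1,\infty)$, or in $(0,a_{2L})$ (apart from the $N-M$ exact zeros of $Q_2$ when $d_N>1$), except possibly within distance $N^{-2/3+\epsilon}$ of an edge $a_k$ --- this relies on $\Im m_{2c}$ being tiny off the support by (\ref{Immc}) together with the $\tau$-regularity (\ref{regular1}); and (b) combining (a) with the integrality of $N_{2k}-N_{2k-1}$ forces the number of eigenvalues in each component $[a_{2k},a_{2k-1}]$ to equal exactly $N_{2k}-N_{2k-1}$. Consequently, for $\gamma_j\in[a_{2k},a_{2k-1}]$ we obtain $\#\{i:\lambda_i\le E\}=Nn_{2c}(E)+O_\prec(1)$ uniformly for $E$ in that component.

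For the second step, recall from \cite[Appendix A]{Anisotropic} that the $\tau$-regularity (\ref{regular1}) forces $\rho_{2c}(x)\sim\sqrt{\kappa(x)}$, where $\kappa(x)$ is the distance from $x$ to the nearest edge of the component containing it. Set $\hat j:=(N_{2k}+1-j)\wedge(j+1-N_{2k-1})$. Using (\ref{gammaj}) together with $F_{2c}(a_{2k-1})=1-N_{2k-1}/N$ and $F_{2c}(a_{2k})=1-N_{2k}/N$, one gets $\min\big(n_{2c}(a_{2k-1})-n_{2c}(\gamma_j),\,n_{2c}(\gamma_j)-n_{2c}(a_{2k})\big)\sim\hat j/N$, hence $\kappa(\gamma_j)\sim(\hat j/N)^{2/3}$ and $\rho_{2c}(\gamma_j)\sim(\hat j/N)^{1/3}$. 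Now fix $\epsilon>0$ and put $\delta_j:=N^{\epsilon}\,\hat j^{-1/3}N^{-2/3}$. When $\hat j\ge N^{\epsilon}$ one has $\delta_j\ll\kappa(\gamma_j)$, so $N\big|n_{2c}(\gamma_j\pm\delta_j)-n_{2c}(\gamma_j)\big|\sim N\rho_{2c}(\gamma_j)\delta_j\sim N^{\epsilon}$, which dominates the $O_\prec(1)$ fluctuation from the first step; comparing $\#\{i:\lambda_i\le\gamma_j-\delta_j\}$ with $\#\{i:\lambda_i\le\gamma_j+\delta_j\}$ then traps $\lambda_j\in[\gamma_j-\delta_j,\gamma_j+\delta_j]$ with high probability. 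When $\hat j<N^{\epsilon}$, so that $\gamma_j$ is within $N^{O(\epsilon)}N^{-2/3}$ of an edge, one instead uses input (a) (no eigenvalues beyond scale $N^{-2/3+\epsilon}$ of the edge) for the upper bound on $\lambda_j$ and the counting estimate for the lower bound, again placing $\lambda_j$ within $N^{\epsilon}N^{-2/3}$ of $\gamma_j$. Since $\epsilon>0$ is arbitrary, this is exactly (\ref{rigidity2}).

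The genuinely delicate point is input (a), namely ruling out eigenvalues more than $N^{-2/3+\epsilon}$ outside the spectral edges. The averaged local law (\ref{aver_law}) alone is not enough here: at the scale $\eta\sim N^{-2/3}$ its $O_\prec\big((N\eta)^{-1}\big)$ error is comparable to the contribution of a single stray eigenvalue, so one must combine the anisotropic laws (\ref{aniso_law})--(\ref{ANISO_LAW2}) with the edge regularity (\ref{regular1}), a dyadic decomposition of the region outside the support, and the integrality of the counting function, together with a careful bootstrap that propagates the estimate down the spectrum. Once (a) is established, the remaining steps are either standard (the Helffer--Sj{\"o}strand argument) or purely deterministic (the inversion), so this edge analysis is where the main work lies.
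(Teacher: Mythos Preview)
The paper does not give its own proof of this theorem; it simply states the result as a corollary of the averaged law (\ref{aver_law}) and refers the reader to \cite[Theorem 3.12]{Anisotropic}. Your outline is exactly the standard route taken there: convert the local law into a uniform counting estimate $|\mathfrak n_N(E)-n_{2c}(E)|\prec N^{-1}$ via Helffer--Sj{\"o}strand, combine with the absence of eigenvalues beyond distance $N^{-2/3+\epsilon}$ of the edges, and then invert using the square-root profile of $\rho_{2c}$ near each $a_k$. So your approach matches the cited reference.

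One point in your final paragraph deserves correction. You are right that the averaged law (\ref{aver_law}) in the form stated, with error $(N\eta)^{-1}$, does not by itself exclude stray eigenvalues at distance $\gg N^{-2/3}$ from an edge. However, the remedy in \cite{Anisotropic} is \emph{not} to invoke the anisotropic laws (\ref{aniso_law})--(\ref{ANISO_LAW2}); those control generalized resolvent entries and do not improve on the trace. What is actually used is a sharpened averaged law valid outside the spectrum, where the error $(N\eta)^{-1}$ is replaced by a quantity of order $\frac{1}{N(\kappa+\eta)}+\frac{1}{(N\eta)^2\sqrt{\kappa+\eta}}$ (this comes from a fluctuation-averaging argument that exploits $\Im m_{2c}\sim\eta/\sqrt{\kappa+\eta}$ off the support, cf.\ (\ref{Immc})). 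With that improvement, taking $\eta\sim\kappa$ at a putative stray eigenvalue gives a genuine contradiction, and no bootstrap or dyadic decomposition is needed. Apart from this misattribution of the key input, your sketch is sound.
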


\subsection{Convergence rate of the VESD}\label{section_proof}

In this subsection, we finish the proof of Theorem \ref{main_thm} using Theorems \ref{lem_EG0}-\ref{thm_largerigidity}. The following arguments have been used previously to control the Kolmogorov distance between the ESD of a random matrix and the limiting law. For example, the reader can refer to \cite[Lemma 6.1]{Wigner_Bern} and \cite[Lemma 8.1]{PY}. By the remark below (\ref{eq_domainD}), we can choose the constant $\omega>0$ such that $\gamma_K/2>\omega$. Also for simplicity, we will only prove the bounds for $\|\mathbb E F_{Q_2,\mathbf v}-F_{2c}\|$ and $\|F_{Q_2,\mathbf v}-F_{2c}\|$. The bounds for $\|\mathbb E F_{Q_1,\mathbf u}-F_{1c,\mathbf u}\|$ and $\|F_{Q_1,\mathbf u}-F_{1c,\mathbf u}\|$ can be proved in the same way. 

\begin{proof}[Proof of (\ref{boundE})]
The key inputs are the bounds (\ref{Eaniso_law}) and (\ref{rigidity2}). Suppose $\langle \mathbf v, \mathcal G_2( X,z) \mathbf v\rangle$ is the Stieltjes transform of $\hat \rho_{\mathbf v}$. Then we define
\begin{equation}\label{nv}
\hat n_{\mathbf v}(E) :=\int \mathbf 1_{[0,E]}(x) \hat \rho_{\mathbf v} dx, \ \   n_{c}(E):= \int \mathbf 1_{[0,E]}(x) \rho_{2c}dx,
\end{equation}
and $\rho_{\mathbf v}:= \mathbb E \hat \rho_{\mathbf v}$, $n_{\mathbf v} : = \mathbb E\hat n_{\mathbf v}$. Hence we would like to bound
$$\|\mathbb E F_{ Q_2,\mathbf v} - F_{2c} \| = \sup_E \left| n_{\mathbf v}(E) - n_{c}(E)\right|.$$
For simplicity, we denote $\Delta \rho:=\rho_{\mathbf v} - \rho_{2c}$ and its Stieltjes transform by
$$\Delta m (z):= \mathbb E \langle \mathbf v, \mathcal G_2(X,z) \mathbf v\rangle - m_{2c}(z).$$ 

Let $\chi(y)$ be a smooth cutoff function with support in $[-1,1]$, with $\chi(y)=1$ for $|y| \le 1/2$ and with bounded derivatives. Fix $\eta_0 = N^{-1+\omega}$ and $3\gamma_K/4 \le E_1 < E_2 \le 3\gamma_1/2 $. Let $f\equiv f_{E_1,E_2,\eta_0}$ be a smooth function supported in $[E_1-\eta_0,E_2+\eta_0]$ such that $f(x) = 1$ if $x\in [E_1+\eta_0,E_2-\eta_0]$, and $|f'|\le C\eta_0^{-1}$, $|f''|\le C\eta_0^{-2}$ if $|x-E_i|\le \eta_0$. Using the Helffer-Sj{\"o}strand calculus (see e.g. \cite{functional_calc}), we have
$$f(E)=\frac{1}{2\pi}\int_{\mathbb R^2} \frac{iy f''(x)\chi(y) + i(f(x)+iyf'(x))\chi'(y)}{E-x-iy}dxdy.$$
Then we obtain that 
\begin{align}
& \left| \int f(E) \Delta \rho(E)dE\right| \nonumber\\ 
& \le C\int_{\mathbb R^2} \left( |f(x)|+|y||f'(x)|\right)|\chi'(y)||\Delta m(x+iy)|dxdy \label{term1}\\
& + C\sum_i \left| \int_{|y|\le \eta_0} \int_{|x-E_i|\le \eta_0} yf''(x)\chi(y)\Im\, \Delta m(x+iy)dxdy\right| \label{term2}\\
& + C\sum_i \left| \int_{|y|\ge \eta_0}\int_{|x-E_i|\le \eta_0} yf''(x)\chi(y)\Im\, \Delta m(x+iy)dxdy\right|. \label{term3}
\end{align}
By (\ref{Eaniso_law}) with $\eta = \eta_0$, we have
\begin{equation}\label{upper}
\eta_0 \Im \, \mathbb E \langle \mathbf v, \mathcal G_2(X,E+i\eta_0) \mathbf v\rangle \prec N^{-1+\omega}.
\end{equation}
Since $\eta \Im \, \mathbb E \langle \mathbf v, \mathcal G_2(X,E+i\eta) \mathbf v\rangle$ and $\eta \Im \, m_{2c}(E+i\eta)$ are increasing with $\eta$, we obtain that
\begin{equation}\label{lower}
\eta\left| \Im \, \Delta m(E+i\eta)\right|\prec N^{-1+\omega} \ \ \text{ for all } 0\le \eta \le \eta_0.
\end{equation}
Moreover, since $G(X,z)^* = G(X, \bar z)$, the estimates (\ref{Eaniso_law}) and (\ref{lower}) also hold for $z\in \mathbb C_-$.

Now we bound the terms (\ref{term1}), (\ref{term2}) and (\ref{term3}). Using (\ref{Eaniso_law}) and that the support of $\chi'$ is in $1\ge |y| \ge 1/2$, the term (\ref{term1}) can be bounded by
\begin{align}\label{term1b}
\int_{\mathbb R^2} \left( |f(x)|+|y||f'(x)|\right)|\chi'(y)||\Delta m(x+iy)|dxdy \prec N^{-1}.
\end{align}
Using $|f''| \le C\eta_0^{-2}$ and (\ref{lower}), we can bound the terms in (\ref{term2}) by
\begin{align}\label{term2b}
\left| \int_{|y| \le \eta_0} \int_{|x-E_i|\le \eta_0} yf''(x)\chi(y)\Im\, \Delta m(x+iy)dxdy\right|  \prec N^{-1+\omega}.
\end{align}
Finally, we integrate the term (\ref{term3}) by parts first in $x$, and then in $y$ (and use the Cauchy-Riemann equation $\partial \Im(\Delta m)/\partial x = - \partial \Re (\Delta m)/\partial y$) to get 
\begin{align}
& \int_{y\ge \eta_0}\int_{|x-E_i|\le \eta_0} yf''(x)\chi(y)\Im\, \Delta m(x+iy)dxdy \nonumber\\
&=  -\int_{|x-E_i|\le \eta_0} \eta_0 \chi(\eta_0) f'(x)\Re\, \Delta m(x+i\eta_0) dx \label{term4}\\
& \quad \, - \int_{y\ge \eta_0}\int_{|x-E_i|\le \eta_0} \left(y\chi'(y) +\chi(y) \right)f'(x)  \Re\, \Delta m(x+iy) dxdy\label{term5} .
\end{align}
We bound the term in (\ref{term4}) by $O_\prec(N^{-1})$ using (\ref{Eaniso_law}) and $|f'|\le C\eta_0^{-1}$. The first term in (\ref{term5}) can be estimated by $O_\prec(N^{-1})$ as in (\ref{term1b}). For the second term in (\ref{term5}), we again use (\ref{Eaniso_law}) and $|f'|\le C\eta_0^{-1}$ to get that
\begin{align*}
\left|\int_{y\ge \eta_0}\int_{|x-E_i|\le \eta_0} \chi(y) f'(x)  \Re\, \Delta m(x+iy) dxdy\right| \prec \int_{\eta_0}^1 \frac{1}{Ny}dy \prec N^{-1}.
\end{align*}
Combining the above estimates, we obtain that
\begin{equation*}
\left| \int_{y \ge \eta_0}\int_{|x-E_i|\le \eta_0} yf''(x)\chi(y)\Im\, \Delta m(x+iy)dxdy\right| \prec N^{-1}. 
\end{equation*}
Obviously, the same estimate also holds for the $y\le -\eta_0$ part. Together with (\ref{term1b}) and (\ref{term2b}), we conclude that
\begin{align}\label{estimatef}
\left| \int f(E) \Delta \rho(E)dE\right| \prec N^{-1+\omega}.
\end{align}

For any interval $I:=[E-\eta_0,E+\eta_0]$ with $E \in [\gamma_K/2, 2\gamma_1]$, we have
\begin{equation}\label{estimaten}
\begin{split}
& \hat n_{\mathbf v}(E+\eta_0) - \hat n_{\mathbf v} (E-\eta_0) = \sum\limits_{\lambda_k \in (E-\eta_0,E+\eta_0]} |\langle\zeta_k,\mathbf v \rangle|^2 \\
& \le 2\eta_0 \sum\limits_{k = 1}^{N} \frac{|\langle\zeta_k,\mathbf v \rangle|^2 \eta_0}{(\lambda_k-E)^2+\eta_0^2} = 2\eta_0  \Im\,  \langle \mathbf v, \mathcal G_2(X,E+i\eta_0) \mathbf v\rangle, 
\end{split}
\end{equation}
where in the last step we used the spectral decomposition 
$$\mathcal G_2(X,E+i\eta)=\sum_{k=1}^N \frac{\zeta_k \zeta_k^*}{\lambda_k - E - i\eta},$$
which follows from (\ref{SVD_X}). Then by (\ref{upper}) and Lemma \ref{lem_stodomin}, we get that
\begin{equation}\label{estimaten0}
n_{\mathbf v}(E+\eta_0) - n_{\mathbf v} (E-\eta_0) \prec N^{-1+\omega}.
\end{equation}
On the other hand, since $\rho_{2c}$ is bounded, we trivially have 
\begin{equation}\label{estimaten2}
n_c(E+\eta_0)-n_c(E-\eta_0)\le C\eta_0 = CN^{-1+\omega}.
\end{equation}

Now we set $E_2=3\gamma_1/2$. With (\ref{estimatef}), (\ref{estimaten0}) and (\ref{estimaten2}), we get that for any $E\in [3\gamma_K/4,E_2]$,
\begin{equation}\label{pointE}
\left| \left(n_{\mathbf v}(E_2) - n_{\mathbf v}(E)\right) - \left(n_{c}(E_2)-n_{c}(E)\right) \right| \prec N^{-1+\omega} .
\end{equation}
Note that by (\ref{rigidity2}), the eigenvalues of $Q_2$ are inside $\{0\}\cup [3\gamma_K/4,E_2]$ with high probability. Hence we have that with high probability, 
\begin{equation}\label{nhat_small}
\hat n_{\mathbf v}(E_2) = n_c(E_2)=1, \ \  \hat n_{\mathbf v}(3\gamma_K/4)=\hat n_{\mathbf v}(0).
\end{equation}
Together with (\ref{pointE}), we get that
\begin{equation}\label{supE}
\sup_{E\ge 0} \left| n_{\mathbf v}(E) - n_{c}(E)\right| \prec N^{-1+\omega}.
\end{equation}
This concludes (\ref{boundE}) since $\omega$ can be arbitrarily small. 
\end{proof}

\begin{proof}[Proof of (\ref{boundp})]
The proof for (\ref{boundp}) is similar except that we shall use the estimate (\ref{ANISO_LAW2}) instead of (\ref{Eaniso_law}). By (\ref{ANISO_LAW2}), we have for any $\mathbf v \in \mathbb C^{\mathcal I_2}$,
\begin{equation}\label{EANISO_LAW2tilde}
\left| \langle \mathbf v, \mathcal G_2(X,z) \mathbf v\rangle - m_{2c}(z) \right| \prec N^{-2\phi} + (N\eta)^{-1/2} 
\end{equation}
uniformly in $z\in \mathbf D$. Then we would like to bound (recall (\ref{nv}))
$$\| F^{(M)}_{Q_2,\mathbf v} - F_{2c}\| = \sup_E \left| \hat n_{\mathbf v}(E) - n_{c}(E)\right|,$$
where $\hat n_{\mathbf v}$ is defined in (\ref{nv}). We denote 
$$\Delta \hat \rho:=\hat \rho_{\mathbf v} - \rho_{1c}, \ \ \Delta \hat m:= \langle \mathbf v, \mathcal G_2(X,z) \mathbf v\rangle - m_{2c}(z).$$
Then for $f_{E_1,E_2,\eta_0}$ defined above, we can repeat the Helffer-Sj{\"o}strand argument with the estimate (\ref{EANISO_LAW2tilde}) to get that
\begin{align}
\sup_{E_1 , E_2} \left| \int f_{E_1,E_2,\eta_0}(E) \Delta \hat \rho(E)dE\right| \prec N^{-2\phi}+N^{-1/2},
\end{align}
which, together with (\ref{estimaten}) and (\ref{nhat_small}), implies that
\begin{equation*}
\sup_{E\ge 0} \left| \hat n_{\mathbf v}(E) - n_{c}(E)\right| \prec N^{-2\phi}+N^{-1/2}.
\end{equation*}
This concludes (\ref{boundp}) by the Definition \ref{stoch_domination}.
\end{proof}

\section{Proof of Theorem \ref{lem_EG}}\label{proof_lem_EG}



We first collect some useful identities from linear algebra and some simple resolvent estimates. For simplicity, we denote $Y:=\Sigma^{1/2}X$.

\begin{defn}[Minors]
For $\mathbb T \subseteq \mathcal I$, we define the minor $H^{(\mathbb T)}:=(H_{ab}:a,b \in \mathcal I\setminus \mathbb T)$ obtained by removing all rows and columns of $H$ indexed by $a,b\in \mathbb T$. Note that we keep the names of indices when defining $H^{(\mathbb T)}$, i.e. $(H^{(\mathbb{T})})_{ab}=\mathbf{1}_{ \{a,b \notin \mathbb{{T}}\}} H_{ab}$. Correspondingly, we define the Green function 
$$G^{(\mathbb T)}:=(H^{(\mathbb T)})^{-1}= \left( {\begin{array}{*{20}c}
   { z\mathcal G_1^{(\mathbb T)}} & \mathcal G_1^{(\mathbb T)} Y^{(\mathbb T)}  \\
   {(Y^{(\mathbb T)})^*\mathcal G_1^{(\mathbb T)}} & { \mathcal G_2^{(\mathbb T)} }  \\
\end{array}} \right)= \left( {\begin{array}{*{20}c}
   { z\mathcal G_1^{(\mathbb T)}} & Y^{(\mathbb T)}\mathcal G_2^{(\mathbb T)}   \\
   {\mathcal G_2^{(\mathbb T)}}(Y^{(\mathbb T)})^* & { \mathcal G_2^{(\mathbb T)} }  \\
\end{array}} \right),$$
and the partial traces
$$m_1^{(\mathbb T)}:=\frac{1}{M}{\rm{Tr}}\, \mathcal G_1^{(\mathbb T)} = \frac{1}{Mz}\sum_{i\in \mathcal I_1}G_{ii}^{(\mathbb T)},\ \ m_2^{(\mathbb T)}:=\frac{1}{N}{\rm{Tr}}\, \mathcal G_2^{(\mathbb T)} = \frac{1}{N}\sum_{\mu\in \mathcal I_2}G_{\mu\mu}^{(\mathbb T)},$$
where we adopt the convention that $G^{(\mathbb T)}_{ab}=0$ if $a\in \mathbb T$ or $b\in \mathbb T$. For simplicity, we will abbreviate $(\{a\})\equiv (a)$ and $(\{a, b\})\equiv (ab)$.
\end{defn}

\begin{lem}[Resolvent identities]
\begin{itemize}

 \item[(i)]
 For $a \in \mathcal I$ and $b, c \in \mathcal I \setminus \{a\}$,
\begin{equation}
G_{bc}=G_{bc}^{\left( a \right)} + \frac{G_{ba} G_{ac}}{G_{aa}}, \ \ \frac{1}{{G_{bb} }} = \frac{1}{{G_{bb}^{(a)} }} - \frac{{G_{ba} G_{ab} }}{{G_{bb} G_{bb}^{(a)} G_{aa} }}. \label{resolvent8}
\end{equation}

\item[(ii)]
For $i\in \mathcal I_1$ and $\mu\in \mathcal I_2$, we have
\begin{equation}
\frac{1}{{G_{ii} }} =  - 1 - \left( {YG^{\left( i \right)} Y^*} \right)_{ii} ,\ \ \frac{1}{{G_{\mu \mu } }} =  - z  - \left( {Y^*  G^{\left( \mu  \right)} Y} \right)_{\mu \mu }.\label{resolvent2}
\end{equation}

 \item[(iii)]
 For $i\ne j \in \mathcal I_1$ and $\mu \ne \nu \in \mathcal I_2$, we have
\begin{equation}
G_{ij}   = G_{ii} G_{jj}^{\left( i \right)} \left( {YG^{\left( {ij} \right)} Y^* } \right)_{ij},  \ \  G_{\mu \nu }  = G_{\mu \mu } G_{\nu \nu }^{\left( \mu  \right)} \left( {Y^*  G^{\left( {\mu \nu } \right)} Y} \right)_{\mu \nu }.\label{resolvent4}
\end{equation}

\item[(iv)]
All of the above identities hold for $G^{(\mathbb T)}$ instead of $G$ for $\mathbb T \subset \mathcal I$.
\end{itemize}
\label{lemm_resolvent}
\end{lem}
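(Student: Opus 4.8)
The plan is to deduce all four groups of identities from two standard facts of linear algebra applied to the self-adjoint block matrix $\mathcal{H}\equiv\mathcal{H}(z)$ whose inverse is $G$ (the matrix displayed in (\ref{eqn_defG})): the block-inversion formula and the scalar Schur complement formula. The structural observation that makes part (iv) automatic is that every minor $\mathcal{H}^{(\mathbb{T})}$ is again a matrix of exactly the same block type --- its upper-left block is $-I$ on $\mathcal{I}_1\setminus\mathbb{T}$, its lower-right block is $-zI$ on $\mathcal{I}_2\setminus\mathbb{T}$, and its off-diagonal blocks are the corresponding submatrices of $Y$ and $Y^*$ --- so any identity established for $(\mathcal{H},G,Y,\mathcal{I}_{1,2})$ holds verbatim for $(\mathcal{H}^{(\mathbb{T})},G^{(\mathbb{T})},Y^{(\mathbb{T})},\mathcal{I}_{1,2}\setminus\mathbb{T})$.

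For part (i), I would partition the index set into $\{a\}$ and its complement and apply the block-inversion formula
\[
\begin{pmatrix} A & B \\ C & D \end{pmatrix}^{-1}
= \begin{pmatrix} S^{-1} & -S^{-1}BD^{-1} \\ -D^{-1}CS^{-1} & D^{-1}+D^{-1}CS^{-1}BD^{-1} \end{pmatrix}
\]
with $S:=A-BD^{-1}C$, taking $A=\mathcal{H}_{aa}$ the single diagonal entry, $D=\mathcal{H}^{(a)}$ (so $D^{-1}=G^{(a)}$), and $S=1/G_{aa}$ by the Schur complement formula. Reading off the off-diagonal blocks gives $BD^{-1}=-G_{aa}^{-1}(G_{ac})_{c\ne a}$ and $D^{-1}C=-(G_{ba})_{b\ne a}G_{aa}^{-1}$; substituting these into the $(2,2)$-block yields $G_{bc}=G_{bc}^{(a)}+G_{ba}G_{ac}/G_{aa}$. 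The second identity of (i) is then obtained by setting $c=b$ and dividing through by $G_{bb}G_{bb}^{(a)}$.

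For part (ii), I would single out the index $i\in\mathcal{I}_1$ (resp. $\mu\in\mathcal{I}_2$) and use the scalar Schur complement formula directly, $1/G_{ii}=\mathcal{H}_{ii}-\sum_{a,b\ne i}\mathcal{H}_{ia}G_{ab}^{(i)}\mathcal{H}_{bi}$. Since $\mathcal{H}_{ii}=-1$ and the only nonzero entries of the $i$-th row and column of $\mathcal{H}$ away from the diagonal lie in the $Y$-block ($\mathcal{H}_{i\mu}=Y_{i\mu}$, $\mathcal{H}_{\mu i}=\overline{Y_{i\mu}}$), the double sum collapses to $(YG^{(i)}Y^*)_{ii}$, giving the first formula; the second follows identically with $\mathcal{H}_{\mu\mu}=-z$. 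For part (iii), I would eliminate all indices except a pair $\{i,j\}\subseteq\mathcal{I}_1$ (resp. $\{\mu,\nu\}\subseteq\mathcal{I}_2$), so that $(G_{ab})_{a,b\in\{i,j\}}$ is the inverse of the $2\times2$ Schur complement $M$ of $\mathcal{H}^{(ij)}$ in $\mathcal{H}$ relative to $\{i,j\}$. Because $\mathcal{H}_{ij}=0$, the off-diagonal entry of $M$ equals $-(YG^{(ij)}Y^*)_{ij}$, while the computation of part (ii) identifies its diagonal entries as $1/G_{ii}^{(j)}$ and $1/G_{jj}^{(i)}$; Cramer's rule for $2\times2$ matrices then gives $\det M=(G_{ii}G_{jj}^{(i)})^{-1}$ and hence $G_{ij}=G_{ii}G_{jj}^{(i)}(YG^{(ij)}Y^*)_{ij}$, and symmetrically in the $\mathcal{I}_2$ case. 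Part (iv) is immediate from the structural observation in the first paragraph.

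There is no genuine obstacle here; the only care required is bookkeeping --- keeping track of signs and complex conjugates in the $Y$- and $Y^*$-blocks (recall that $\mathcal{H}$ is self-adjoint but generally not symmetric when the $x_{ij}$ are complex), and using the ``delete $\mathbb{T}$ but keep the index labels'' convention consistently so that the minors appearing in the Schur complements are exactly the matrices $(\mathcal{H}^{(\mathbb{T})})_{ab}=\mathbf{1}_{\{a,b\notin\mathbb{T}\}}\mathcal{H}_{ab}$.
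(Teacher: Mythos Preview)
Your proposal is correct and is essentially the same approach the paper indicates: the paper's proof simply states that ``these identities can be proved using Schur complement formula'' and refers the reader to \cite[Lemmas 3.6 and 3.8]{isotropic} and \cite[Lemma 4.4]{Anisotropic}. Your write-up supplies exactly the details behind that one-line remark, via the block-inversion/Schur complement formulas, and your structural observation for part (iv) is precisely why the cited references state these identities at the level of arbitrary minors.
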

\begin{proof}
These identities can be proved using Schur complement formula. The reader can refer to e.g. \cite[Lemmas 3.6 and 3.8]{isotropic} or \cite[Lemma 4.4]{Anisotropic}.
\end{proof}

\begin{lem}
Suppose $\tilde \Phi(z)$ is a deterministic function on $\mathbf D$ satisfying $N^{-1/2} \le \tilde \Phi(z) \le N^{-c}$ for some constant $c>0$. Suppose $\left|  G_{ab}(z) - \Pi_{ab} (z) \right| \prec \tilde \Phi(z)$ uniformly in $a,b\in \mathcal I$ and $z\in \mathbf D$. 
Then for any $\mathbb T \subseteq \mathcal I$ with $|\mathbb T|=O(1)$, we have uniformly in $z\in \mathbf D$,
\begin{equation}
\max_{a,b \in \mathcal I \setminus \mathbb T}\left| {G_{ab} (z) - G_{ab}^{\left( \mathbb T \right)} } (z)\right| \prec \tilde \Phi^2 (z). \label{G_T}
\end{equation}
\end{lem}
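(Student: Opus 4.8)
The plan is to peel off the indices of $\mathbb T$ one at a time using the resolvent identity (\ref{resolvent8}), while keeping track that every intermediate minor obeys the same entrywise estimate as $G$ itself. Two structural facts drive the argument. First, $\Pi(z)$ in (\ref{defn_pi}) is diagonal because $\Sigma$ is diagonal, so for $a \ne b$ we have $\Pi_{ab} = 0$, and the hypothesis gives $|G_{ab}(z)| \prec \tilde\Phi(z)$. Second, the diagonal entries of $\Pi$ are bounded below: $|\Pi_{\mu\mu}| = |m_{2c}| \sim 1$ for $\mu \in \mathcal I_2$, and $|\Pi_{ii}| = |1 + m_{2c}\sigma_i|^{-1} \gtrsim 1$ for $i \in \mathcal I_1$ by (\ref{Immc}) together with $\sigma_i \le \tau^{-1}$. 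Combined with $|G_{aa} - \Pi_{aa}| \prec \tilde\Phi \le N^{-c}$, this gives $|G_{aa}(z)| \gtrsim 1$ for all $a \in \mathcal I$ with high probability.

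Write $\mathbb T = \{a_1, \dots, a_k\}$ with $k = |\mathbb T| = O(1)$ and $\mathbb T_\ell := \{a_1, \dots, a_\ell\}$ for $0 \le \ell \le k$. I would prove by induction on $\ell$ that $|G^{(\mathbb T_\ell)}_{ab} - \Pi_{ab}| \prec \tilde\Phi$ uniformly in $a, b \in \mathcal I \setminus \mathbb T_\ell$ and $z \in \mathbf D$, the case $\ell = 0$ being the hypothesis. For the step $\ell \to \ell+1$, apply (\ref{resolvent8}) with minor superscript $\mathbb T_\ell$ — legitimate by part (iv) of Lemma \ref{lemm_resolvent} — to remove $a_{\ell+1}$:
\begin{equation*}
G^{(\mathbb T_\ell)}_{ab} = G^{(\mathbb T_{\ell+1})}_{ab} + \frac{G^{(\mathbb T_\ell)}_{a a_{\ell+1}} \, G^{(\mathbb T_\ell)}_{a_{\ell+1} b}}{G^{(\mathbb T_\ell)}_{a_{\ell+1} a_{\ell+1}}}, \qquad a, b \in \mathcal I \setminus \mathbb T_{\ell+1}.
\end{equation*}
Since $a, b \ne a_{\ell+1}$, the inductive hypothesis and the diagonality of $\Pi$ yield $|G^{(\mathbb T_\ell)}_{a a_{\ell+1}}| \prec \tilde\Phi$ and $|G^{(\mathbb T_\ell)}_{a_{\ell+1} b}| \prec \tilde\Phi$, while $|G^{(\mathbb T_\ell)}_{a_{\ell+1} a_{\ell+1}}| \ge |\Pi_{a_{\ell+1} a_{\ell+1}}| - O_\prec(\tilde\Phi) \gtrsim 1$; hence the correction term is $O_\prec(\tilde\Phi^2)$. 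Therefore $|G^{(\mathbb T_\ell)}_{ab} - G^{(\mathbb T_{\ell+1})}_{ab}| \prec \tilde\Phi^2$ and $|G^{(\mathbb T_{\ell+1})}_{ab} - \Pi_{ab}| \prec \tilde\Phi + \tilde\Phi^2 \prec \tilde\Phi$, which closes the induction.

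Summing the one-step bounds over $\ell = 0, \dots, k-1$ and using $k = O(1)$ gives
\begin{equation*}
|G_{ab} - G^{(\mathbb T)}_{ab}| \le \sum_{\ell=0}^{k-1} |G^{(\mathbb T_\ell)}_{ab} - G^{(\mathbb T_{\ell+1})}_{ab}| \prec k \tilde\Phi^2 \prec \tilde\Phi^2,
\end{equation*}
which is (\ref{G_T}). Uniformity is automatic: the maxima range over $O(N^2)$ index pairs and $2^{|\mathbb T|} = O(1)$ subsets $\mathbb T_\ell$, so the union bound built into Definition \ref{stoch_domination} and Lemma \ref{lem_stodomin}(i) apply. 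There is no genuine obstacle here — the lemma is a routine consequence of the resolvent expansion — and the only points needing a moment's care are the two structural facts above (diagonality of $\Pi$ and the lower bound on its diagonal, hence on $|G_{aa}|$), together with the trivial remark that the implicit constants do not degrade over the iteration since $k$ is a fixed constant independent of $N$.
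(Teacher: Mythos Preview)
Your proposal is correct and follows exactly the route the paper indicates: repeated application of the first identity in (\ref{resolvent8}) to peel off the indices of $\mathbb T$ one by one. The paper's proof is a one-line sketch, and you have supplied precisely the details it leaves implicit — the diagonality of $\Pi$, the lower bound $|\Pi_{aa}| \gtrsim 1$ on its diagonal (hence on $|G_{aa}|$), and the inductive bookkeeping showing that the minors inherit the same $\tilde\Phi$-bound at each step.
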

\begin{proof}
The bound (\ref{G_T}) can be proved by repeatedly applying the first resolvent expansion in (\ref{resolvent8}) with respect to the indices in $\mathbb T$. 
\end{proof}

For $X$ satisfying the assumptions in Theorem \ref{lem_EG0}, we write $X=X_1 + B,$ where $X_1:= X-\mathbb EX$ is a real random matrix satisfying (\ref{entry_assm1}), (\ref{conditionA3}) and
\begin{equation}\label{entry_assmX1}
\mathbb E (X_1)_{i\mu} = 0, \quad i\in \mathcal I_1,\ \mu\in \mathcal I_2,
\end{equation}
and $B:=\mathbb EX$ is a deterministic matrix such that 
\begin{equation}\label{boundB}
\max_{i,\mu}|B_{i\mu}|\le C_0 N^{-2-c_0}.
\end{equation}
The next lemma shows that $G(X,z)$ is very close to $G(X_1,z)$ in the sense of anisotropic local law. Its proof will be given in the supplementary material.

\begin{lem}\label{comp_claim} 
If (\ref{aniso_law}) holds for $G(X_1,z)$, then we have 
\begin{equation}\label{aniso_central}
\left| \langle \mathbf u, G(X,z) \mathbf v\rangle - \langle \mathbf u, G(X_1,z) \mathbf v\rangle \right| \prec (N\eta)^{-1}
\end{equation}
uniformly in $z\in \mathbf D$ and deterministic unit vectors $\mathbf u, \mathbf v \in \mathbb C^{\mathcal I}$.
\end{lem}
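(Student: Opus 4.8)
The plan is to interpolate between $X = X_1 + B$ and $X_1$ through the one-parameter family $X_t := X_1 + tB$ for $t \in [0,1]$, and to estimate the derivative $\partial_t \langle \mathbf u, G(X_t,z) \mathbf v\rangle$. Writing $Y_t := \Sigma^{1/2} X_t$, $H_t$ for the corresponding linearized matrix as in (\ref{linearize_block}), and $G_t := G(X_t,z)$, we have $\partial_t H_t = \Delta$, where $\Delta$ is the off-diagonal block matrix with blocks $\Sigma^{1/2}B$ and $(\Sigma^{1/2}B)^*$. Hence $\partial_t G_t = - G_t \Delta G_t$, and therefore
\begin{equation}\label{comp_deriv}
\langle \mathbf u, G(X,z)\mathbf v\rangle - \langle \mathbf u, G(X_1,z)\mathbf v\rangle = - \int_0^1 \langle \mathbf u, G_t \Delta G_t \mathbf v\rangle \, \dd t.
\end{equation}
Since $B$ is entrywise bounded by $C_0 N^{-2-c_0}$ (recall (\ref{boundB})) and $\|\Sigma\| \le \tau^{-1}$, the operator norm of $\Delta$ is at most $\|\Sigma^{1/2}B\| \le \|\Sigma^{1/2}\| \cdot \|B\|_{\mathrm{HS}} \le C N^{-1-c_0}$ (using $\|B\|_{\mathrm{HS}}^2 \le MN \cdot (C_0 N^{-2-c_0})^2$), so a crude bound gives $|\langle \mathbf u, G_t \Delta G_t \mathbf v\rangle| \le \|\Delta\| \cdot \|G_t\|^2 \le C N^{-1-c_0} \eta^{-2}$, which is already $\prec (N\eta)^{-1}$ whenever $\eta \ge N^{-c_0/2}$, say. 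The point, however, is to get the bound down to all of $\mathbf D$, i.e.\ down to $\eta \ge N^{-1+\omega}$, where the crude estimate fails.

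To handle small $\eta$ I would not use $\|G_t\| \le \eta^{-1}$ but instead exploit the anisotropic local law. First, note that $X_t$ satisfies the hypotheses of Theorem \ref{lem_EG0} for every $t \in [0,1]$: its entries have mean zero, the variance condition (\ref{entry_assm1}) holds (with the same constants, since $tB$ only shifts the mean), the fourth-moment bound (\ref{conditionA3}) is unaffected, and $X_t$ has support $q \le N^{-\phi}$. Hence (\ref{aniso_law}) and in fact the entrywise law (\ref{entry_law}) hold for $G_t$ uniformly in $t$ and $z \in \mathbf D$, with $|G_{t,ab} - \Pi_{ab}| \prec q + \Psi(z)$. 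Now expand
\begin{equation}\label{comp_expand}
\langle \mathbf u, G_t \Delta G_t \mathbf v\rangle = \sum_{a,b,c,d \in \mathcal I} \bar u_a (G_t)_{ab} \Delta_{bc} (G_t)_{cd} v_d,
\end{equation}
where $\Delta_{bc}$ is supported on pairs $(b,c)$ with $b \in \mathcal I_1, c \in \mathcal I_2$ (or vice versa) and $|\Delta_{bc}| \le \|\Sigma^{1/2}\| |B_{b c}| \le C N^{-2-c_0}$ (with appropriate index identifications). Using $\|G_t\|_{\max} \prec 1$ together with $\sum_a |\bar u_a (G_t)_{ab}| = |(G_t^* \mathbf u)_b| \prec 1$ for each fixed $b$ — which follows from the isotropic form of the local law applied to the unit vector supported at $b$, or more directly from $\|G_t\| \le \eta^{-1}$ combined with Cauchy–Schwarz giving $\|G_t^*\mathbf u\|_2 \le \eta^{-1}$ and then bounding componentwise — one gets
\begin{equation*}
|\langle \mathbf u, G_t \Delta G_t \mathbf v\rangle| \prec \sum_{b,c} |\Delta_{bc}| \cdot \|G_t^* \mathbf u\|_{\max} \cdot \|G_t \mathbf v\|_{\max} \le MN \cdot C N^{-2-c_0} \cdot O_\prec(1) \prec N^{-c_0},
\end{equation*}
which is $\prec (N\eta)^{-1}$ for all $z \in \mathbf D$ since $(N\eta)^{-1} \le N^{-\omega}$ there and $c_0 > 0$ is fixed. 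Integrating over $t \in [0,1]$ in (\ref{comp_deriv}) then yields (\ref{aniso_central}).

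The main obstacle is making the estimate of (\ref{comp_expand}) genuinely uniform down to $\eta \sim N^{-1+\omega}$ with the correct power, i.e.\ ensuring that the double sum over $\mathcal I \times \mathcal I$ of the tiny entries of $\Delta$ against the $O_\prec(1)$ entries of the resolvents does not lose more than the $MN = O(N^2)$ counting factor. This is where one must be careful: a naive fourfold sum over $a,b,c,d$ would lose $N^4$ and destroy the bound, so it is essential to sum $a$ against $b$ (and $d$ against $c$) first, using that $\mathbf u, \mathbf v$ are unit vectors and $G_t$ has bounded operator norm times the localization from the entrywise law, rather than bounding each $(G_t)_{ab}$ individually and summing. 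Once the contraction of the $a$ and $d$ indices is done correctly, only the $\sum_{b,c}|\Delta_{bc}| \le MN \cdot CN^{-2-c_0} = O(N^{-c_0})$ factor remains, and the argument closes with room to spare; a union bound over the $N^{O(1)}$-sized family of vectors $\mathbf u, \mathbf v$ and a Riemann-sum discretization in $t$ then upgrade the pointwise-in-$t$ bounds to the uniform statement.
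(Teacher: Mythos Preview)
Your argument has a genuine gap in the very last inequality. You obtain
\[
|\langle \mathbf u, G_t\Delta G_t\mathbf v\rangle| \prec N^{-c_0}
\]
and then assert that this is $\prec (N\eta)^{-1}$ ``since $(N\eta)^{-1}\le N^{-\omega}$.'' But that inequality goes the wrong way: on $\mathbf D$ we have $(N\eta)^{-1}$ ranging from about $N^{-\omega}$ (when $\eta\sim N^{-1+\omega}$) down to about $N^{-1}$ (when $\eta\sim 1$). For $\eta$ of order $1$ the target $(N\eta)^{-1}\sim N^{-1}$ is far smaller than your $N^{-c_0}$ unless $c_0\ge 1$, which is nowhere assumed. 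So the bound $N^{-c_0}$ simply does not imply $(N\eta)^{-1}$ on all of $\mathbf D$.

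The fix is to replace your $\ell^\infty$ estimate $\|G^*\mathbf u\|_{\max}\prec 1$ by the $\ell^2$ Ward-type bound $\|G(X_1)^*\mathbf u\|_2^2\prec \eta^{-1}$, which follows from the identity $\Im\langle\mathbf u,G\mathbf u\rangle=\eta\|P_2G^*\mathbf u\|_2^2$ (and an analogous computation for the $\mathcal I_1$ block via $Y^*\mathcal G_1 Y=I+z\mathcal G_2$) together with the hypothesis (\ref{aniso_law}) for $G(X_1)$. Then Cauchy--Schwarz and $\|\Delta\|\le CN^{-1-c_0}$ give
\[
|\langle\mathbf u,G(X_1)\Delta G(X_1)\mathbf v\rangle|\;\le\;\|G(X_1)^*\mathbf u\|_2\,\|\Delta\|\,\|G(X_1)\mathbf v\|_2\;\prec\;N^{-1-c_0}\eta^{-1}\;=\;N^{-c_0}(N\eta)^{-1},
\]
which is the claimed bound. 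There is also no need to interpolate: a single resolvent identity $G(X)-G(X_1)=-G(X_1)\Delta G(X)$ together with $\|G(X)\mathbf v\|_2\le 2\|G(X_1)\mathbf v\|_2$ (from $\|G(X_1)\Delta\|\le C\eta^{-1}N^{-1-c_0}\le CN^{-\omega-c_0}<1/2$) already closes the argument. This also avoids the circularity in your appeal to Theorem~\ref{lem_EG0} for $X_t$: the entries of $X_t$ are \emph{not} centered, and the non-centered case of Theorem~\ref{lem_EG0} is exactly what Lemma~\ref{comp_claim} is meant to supply.
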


\subsection{Sketch of the proof for Theorem \ref{lem_EG}}\label{sketch}

In this subsection, we start proving our main resolvent estimate (\ref{Eaniso_law0}). For simplicity, we denote $\Phi:= q^2 + (N\eta)^{-1/2}$. By Lemma \ref{comp_claim}, we can assume that the entries of $X$ are centered without loss of generality. 
We will only prove (\ref{Eaniso_law0}) for $\mathbf u,\mathbf v \in \mathbb C^{\mathcal I_2}$, while the proof in the case of $\mathbf u,\mathbf v \in \mathbb C^{\mathcal I_1}$ is exactly the same. Also by polarization, it suffices to prove the following estimate
\begin{equation}\label{Eaniso_law1}
\left| \mathbb E\langle \mathbf v, \mathcal G_2(X,z) \mathbf v\rangle - m_{2c}(z) \right| \prec q^4 + (N\eta)^{-1}, \ \ \mathbf v\in \mathbb C^{\mathcal I_2}.
\end{equation}
We can obtain the more general bound (\ref{Eaniso_law0}) by applying (\ref{Eaniso_law1}) to the vectors $\mathbf u + \mathbf v$ and $\mathbf u + i\mathbf v$, respectively. 
Note that (\ref{ANISO_LAW2}) gives the a priori bound
$$\Big|\sum_{\mu, \nu} \bar v_\mu v_\nu \mathbb E \left(\mathcal G_2\right)_{\mu\nu} - m_{2c}\Big| \prec \Phi .$$
We will show that after taking expectation, the leading order term in $\left(\mathcal G_2\right)_{\mu\nu} - m_{2c}\delta_{\mu\nu}$ vanishes and leads to the better estimate (\ref{Eaniso_law1}). We deal with the diagonal and off-diagonal parts separately:
\begin{align*}
\sum_{\mu}|v_\mu|^2 \left[\mathbb E(\mathcal G_2)_{\mu\mu} - m_{2c}\right], \quad \sum_{\mu\ne \nu} \bar v_\mu v_\nu \mathbb E \left(\mathcal G_2\right)_{\mu\nu} .
\end{align*}
 

For any $\mathbb T\subseteq \mathcal I$, we define the $Z$ variables
\begin{equation}\label{Z_variable}
  Z^{(\mathbb T)}_{\mu} :=(1-\mathbb E_{\mu}) (G^{(\mathbb T)})_{\mu\mu}^{-1} = \frac{1}{N}\sum_{i\in \mathcal I_1}\sigma_i G_{ii}^{(\mathbb T \mu)} - ( {Y^*G^{\left( \mathbb T \mu \right)} Y} )_{\mu\mu}, \ \ \mu\notin \mathbb T,
\end{equation}
where $\mathbb E_{\mu}[\cdot]:=\mathbb E[\cdot| H^{(\mu)}],$ i.e. it is the partial expectation in the randomness of the $\mu$-th row and column of $H$, and we used (\ref{resolvent2}) in the second step. If $\mathbb T=\emptyset$, we shall abbreviate $ Z_{i} \equiv  Z^{(\emptyset)}_{i} $. Note that by (\ref{ANISO_LAW2}), (\ref{G_T}) (with $\tilde \Phi= q+\Psi$ by (\ref{aniso_law})), and Lemma \ref{lem_stodomin}, we have
\begin{equation}\label{Z_bound}
Z^{(\mathbb T)}_{\mu} :=(1-\mathbb E_{\mu}) \left[(G^{(\mathbb T)})_{\mu\mu}^{-1} - m_{2c}^{-1}\right] \prec \Phi,
\end{equation}
for any $\mathbb T \subseteq \mathcal I$ with $|\mathbb T|=O(1)$. Then using (\ref{resolvent2}) we get that 
\begin{align*}
\mathbb EG_{\mu\mu} - m_{2c}  & = \mathbb E\frac{1}{-z- N^{-1}\sum_i \sigma_i \Pi_{ii} - N^{-1}\sum_i \sigma_i (G_{ii}^{(\mu)}-\Pi_{ii}) + Z_\mu} - m_{2c}\\
&= - m_{2c}^2\mathbb E Z_\mu + O_\prec\left(\Phi^2 + (N\eta)^{-1}\right) = O_\prec\left(\Phi^2\right),
\end{align*}
where in the second step we used (\ref{aver_law}), (\ref{G_T}), (\ref{Z_bound}), and 
\begin{equation}\label{self_use}
-z- N^{-1}\sum_i \sigma_i \Pi_{ii} = m_{2c}^{-1},
\end{equation}
which follows from (\ref{defn_pi}) and (\ref{deformed_MP21}). 
So we can bound the diagonal part by
\begin{equation}\label{diagonal_iso}
\sum_{\mu}|v_\mu|^2 \left[\mathbb E(\mathcal G_2)_{\mu\mu} - m_{2c}(z)\right]= \sum_{\mu}|v_\mu|^2 [\mathbb EG_{\mu\mu} - m_{2c}(z)] \prec q^4+\frac{1}{N\eta}.
\end{equation}

For the off-diagonal part, we claim that for $\mu\ne \nu \in \mathcal I_2$,
\begin{equation}\label{off_small}
\left|\mathbb E \left(\mathcal G_2\right)_{\mu\nu}\right| \prec N^{-1}\Phi^2.
\end{equation}
Then using (\ref{off_small}) and $\|\mathbf v\|_1 \le \sqrt{N}$, we obtain that
$$\Big|\sum_{\mu\ne \nu} \bar v_\mu v_\nu \mathbb E \left(\mathcal G_2\right)_{\mu\nu}\Big| \prec \|\mathbf v\|_1^2N^{-1}\Phi^2 \le C\left(q^4+\frac{1}{N\eta}\right).$$
This concludes (\ref{Eaniso_law1}) together with (\ref{diagonal_iso}).

To prove (\ref{off_small}), we extend the arguments in \cite[Section 5]{isotropic} and \cite[Section 5]{XYY}. We illustrate the basic idea with some simplified calculations. Using the resolvent identities (\ref{resolvent4}) and (\ref{resolvent8}), we get
\begin{align}
\mathbb E G_{\mu\nu} &= \mathbb E G_{\mu\mu}G_{\nu\nu}^{(\mu)}\left( Y^* G^{(\mu\nu)} Y\right)_{\mu\nu} \nonumber\\
&= \mathbb E G^{(\nu)}_{\mu\mu}G_{\nu\nu}^{(\mu)}\left( Y^*G^{(\mu\nu)}Y\right)_{\mu\nu} + \mathbb E \frac{G_{\mu\nu}G_{\nu\mu}}{G_{\nu\nu}}G_{\nu\nu}^{(\mu)}\left( Y^* G^{(\mu\nu)} Y\right)_{\mu\nu} . \label{Gexpansion1}
\end{align}
We now focus on the first term. Applying (\ref{resolvent2}) gives that
\begin{align}
\mathbb E G^{(\nu)}_{\mu\mu}G_{\nu\nu}^{(\mu)}\left( Y^*G^{(\mu\nu)}Y\right)_{\mu\nu}  & = \mathbb E \frac{\left( Y^*G^{(\mu\nu)}Y\right)_{\mu\nu} }{\left[-z-(Y^* G^{(\mu\nu)}Y)_{\mu\mu}\right]\left[-z-(XG^{(\mu\nu)}X^*)_{\nu\nu}\right]} \nonumber\\
& = \mathbb E \frac{\left( Y^*G^{(\mu\nu)}Y\right)_{\mu\nu}}{\left(m_{2c}^{-1} + \epsilon_\mu\right)\left(m_{2c}^{-1} + \epsilon_\nu \right)}. \label{expansion1}
\end{align}
where we have 
\begin{equation}\label{bound_epsilon}
\epsilon_\mu :=  \frac{1}{N}\sum_{i\in \mathcal I_1}\sigma_i \Pi_{ii} - (Y^*G^{(\mu\nu)}Y)_{\mu\mu} = \frac{1}{N}\sum_{i\in \mathcal I_1}\sigma_i (\Pi_{ii}-G_{ii}^{(\mu\nu)}) + Z_\mu^{(\nu)} \prec \Phi
\end{equation} 
by (\ref{self_use}), (\ref{aver_law}), (\ref{G_T}) (with $\tilde \Phi=q+\Psi$) and (\ref{Z_bound}). We now expand the fractions in (\ref{expansion1}) in order to take the expectation. Note that the $G^{(\mu\nu)}$ entries are independent of the $X$ entries in the $\mu,\nu$-th rows and columns. Thus to attain a nonzero expectation, each $X$ entry must appear at least twice in the expression. Due to this reason, the leading and next-to-leading order terms in the expansion vanish. The ``real" leading order term is 
\begin{align}
\mathbb E {m_{2c}^4} \epsilon_\mu \epsilon_\nu \left( Y^*G^{(\mu\nu)}Y\right)_{\mu\nu} & ={m_{2c}^4}\mathbb E (Y^*G^{(\mu\nu)}Y)_{\mu\mu}(Y^*G^{(\mu\nu)}Y)_{\nu\nu} ( Y^*G^{(\mu\nu)}Y)_{\mu\nu} \nonumber\\
&= {m_{2c}^4}\sum_{\mu, \nu}\frac{C_{i,j}}{N^3} \mathbb E G^{(\mu\nu)}_{ii}G^{(\mu\nu)}_{jj} G^{(\mu\nu)}_{ij}  \nonumber\\
&= {m_{2c}^4} \sum_{i\ne j}\frac{C_{i,j}}{N^3} \Pi_{ii}\Pi_{jj} \mathbb E G^{(\mu\nu)}_{ij} + O_\prec(N^{-1}\Phi^2) ,\label{bound_EG1}
\end{align}
where the constants $C_{i,j}$ depend on $\sigma_i$, $\sigma_j$ and the 3rd moments of $X_{i\mu}$ and $X_{j\mu}$ (recall (\ref{conditionA3})). Here in the last step, we used $|G^{(\mu\nu)}_{ii}-\Pi_{ii}| \prec \Phi$ (by (\ref{ANISO_LAW2}) and (\ref{G_T})) and $|\Pi_{ii}|=O(1)$ (by (\ref{Piii})), and bounded the $i=j$ terms by $O_\prec(N^{-2})=O_\prec(N^{-1}\Phi^2)$. Now applying (\ref{resolvent4}) to $G^{(\mu\nu)}_{ij}$, we get that
\begin{equation}\label{EG_smaller}
\begin{split}
\mathbb E G_{ij}^{(\mu\nu)} &= \mathbb EG_{ii}^{(\mu\nu)}G^{(i\mu\nu)}_{jj} \left( Y G^{(ij\mu\nu)} Y^*\right)_{ij} \\
&=  \Pi_{ii} \Pi_{jj}\mathbb E\left( Y G^{(ij\mu\nu)} Y^*\right)_{ij} + O_\prec(\Phi^2) = O_\prec(\Phi^2),
\end{split}
\end{equation}
where in the second step we used $|G_{ii}^{(\mu\nu)}-\Pi_{ii}| + |G^{(i\mu\nu)}_{jj} - \Pi_{jj}| \prec \Phi$ and
$$\left( Y G^{(ij\mu\nu)} Y^* \right)_{ij} = G_{ij}^{(\mu\nu)} \left(G_{ii}^{(\mu\nu)}G_{jj}^{(i\mu\nu)}\right)^{-1} \prec \Phi, $$
which follow easily from (\ref{ANISO_LAW2}) and (\ref{G_T}), and in the last step the leading order term vanishes since the two $X$ entries are independent for $i\ne j$. Then with (\ref{EG_smaller}), the terms in (\ref{bound_EG1}) can be bounded by $O_\prec(N^{-1}\Phi^2)$.

In general, after the expansion of the two fractions in (\ref{expansion1}), we get a summation of terms of the form
$$A_{m,n}:=\mathbb E \epsilon_\mu^m \epsilon_\nu^n ( Y^* G^{(\mu\nu)}Y)_{\mu\nu}, \ \ \mu\ne \nu,$$
up to some deterministic coefficients of order $O(1)$. Since $|\epsilon_{\mu,\nu}| \prec \Phi\lesssim N^{-\omega/2}$ for $z\in \mathbf D$ (we can take $\omega$ small enough such that $N^{-\omega/2} \ge q^2$), we only need to include the terms with $m+n \le 2+2/\omega$ and the tail terms will be smaller than $N^{-1}\Phi^2$. Note that in $A_{m,n}$, the $X_{*\mu}$ entries, $X_{*\nu}$ entries and $G^{(\mu\nu)}$ entries are mutually independent. Moreover, both the number of $X_{*\mu}$ entries and the number of $X_{*\nu}$ entries are odd. Thus to attain a nonzero expectation, we must pair the $X$ entries such that there are products of the forms $X_{i\mu}^{n_1}$ and $X_{j\nu}^{n_2}$ for some $n_1,n_2\ge 3$. As a result, we lose $(n_1-2)/2 + (n_2-2)/2\ge 1$ free indices, and this contributes an $N^{-1}$ factor. On the other hand, for the product of $G$ entries, we have the following three cases: (1) if there are at least $2$ off-diagonal $G$ entries, then we bound them with $O_\prec(\Phi^2)$; (2) if there is only $1$ off-diagonal $G$ entry, then we can use the trick in (\ref{bound_EG1}) and the bound (\ref{EG_smaller}); (3) if there is no off-diagonal $G$ entry, then we lose one more free index and get an extra $N^{-1}$ factor. This leads to the estimate (\ref{off_small}) for the term in (\ref{expansion1}).

For the second term in (\ref{Gexpansion1}), we again use Lemma \ref{lemm_resolvent} to expand the $G_{\mu\nu}$, $G_{\nu\mu}$ and $G_{\nu\nu}^{-1}$ entries. Our goal is to expand all the $G$ entries into polynomials of the random variables
\begin{equation}\label{Ssymbol}
S_{\alpha\beta}:=(Y^*G^{(\mu\nu)}Y)_{\alpha\beta}, \ \ \alpha,\beta\in \{\mu,\nu\},
\end{equation}
so that the $X$ entries and $G^{(\mu\nu)}$ entries are independent in the resulting expression. In particular, the {\it{maximally expanded}} terms (see (\ref{Amax})) can be expanded into $S_{\alpha\beta}$ variables directly through (\ref{resolvent2}) and (\ref{resolvent4}). However, {\it{non-maximally expanded}} terms are also created along the expansions in (\ref{resolvent4}) and (\ref{resolvent8}). Then we need to further expand these newly appeared terms. In general, this process will not terminate. However, we will show in Lemma \ref{iso_lem_3} that after sufficiently many expansions, the resulting expression either has enough off-diagonal terms, or is maximally expanded. In the former case, it suffices to bound each off-diagonal term by $O_\prec(\Phi)$. In the latter case, the expression will only consist of $S_{\alpha\beta}$ variables. Following the argument in the previous paragraph, the expectation over the $X$ entries produces an $N^{-1}$ factor, while the expectation over the $G$ entries produces a $\Phi^2$ factor. 

Next we give a rigorous proof based on the above arguments.

\subsection{Resolvent expansion}\label{subsection_exp}

To perform the resolvent expansion in a systematic way, we introduce the following notions of {\it{string}} and {\it{string operator}}. 

\begin{defn}[Strings]
Let $\mathfrak A$ be the alphabet containing all symbols that will appear during the expansion: 
$$\mathfrak A=\left\{G_{\alpha\beta}, G_{\alpha\alpha}^{-1},S_{\alpha\beta} \text{ with } \alpha,\beta \in \{\mu,\nu\}\right\} \cup \left\{G_{\mu\mu}^{(\nu)}, G_{\nu\nu}^{(\mu)}, (G_{\mu\mu}^{(\nu)})^{-1},(G_{\nu\nu}^{(\mu)})^{-1}\right\}.$$ 
We define a string $\mathbf s$ to be a concatenation of the symbols from $\mathfrak A$, and we use $\left\llbracket\bf s\right\rrbracket$ to denote the random variable represented by $\mathbf s$. 
We denote an empty string by $\emptyset$ with value $\left\llbracket\emptyset\right\rrbracket = 0$. 
\end{defn}
\begin{rem}
It is important to distinguish a string $\mathbf s$ from its value $\left\llbracket\bf s\right\rrbracket$. For example, $``G_{\mu\nu}"$ and $``G_{\mu\mu}G_{\nu\nu}^{(\mu)}S_{\mu\nu}"$ are different strings, but they represent the same random variable by (\ref{resolvent4}). 
\end{rem}

We shall call the following symbols the {\it{maximally expanded}} symbols:
\begin{equation}\label{Amax}
\mathfrak A_{\max}=\left\{G_{\mu\nu},G_{\nu\mu},G_{\mu\mu}^{(\nu)},G_{\nu\nu}^{(\mu)},(G_{\mu\mu}^{(\nu)})^{-1},(G_{\nu\nu}^{(\mu)})^{-1},S_{\mu\mu},S_{\nu\nu},S_{\mu\nu},S_{\nu\mu}\right\}.
\end{equation}
A string $\mathbf s$ is said to be maximally expanded if all of its symbols are in $\mathfrak A_{\max}$. We shall call $G_{\mu\nu}, G_{\nu\mu}, S_{\mu\nu}, S_{\nu\mu}$ the {\it{off-diagonal}} symbols and all the other symbols {\it{diagonal}}. By (\ref{ANISO_LAW2}) and (\ref{G_T}), we have $\left\llbracket\mathbf a_o\right\rrbracket \prec \Phi$ if $\mathbf a_o$ is off-diagonal (we have $S_{\mu\nu}\prec \Phi$ using (\ref{resolvent4})) and $\left\llbracket\mathbf a_d\right\rrbracket \prec 1$ if $\mathbf a_d$ is diagonal. We use ${\cal F}_{n{\text{-}}max}(\mathbf s)$ and ${\cal F}_{\rm{off}}(\mathbf s)$ to denote the number of non-maximally expanded symbols and the number of off-diagonal symbols, respectively, in $\mathbf s$.

\begin{defn}[String operators]\label{defn_stringoperator}
Let $\alpha\ne \beta \in \{\mu,\nu\}$.

\begin{itemize}
\item[(i)] We define an operator $\tau_0$ acting on a string $\bf s$ in the following sense. Find the first $G_{\alpha\alpha}$ or $G_{\alpha\alpha}^{-1}$ in $\bf s$. If $G_{\alpha\alpha}$ is found, replace it with $G_{\alpha\alpha}^{(\beta)}$; if $G_{\alpha\alpha}^{-1}$ is found, replace it with $(G_{\alpha\alpha}^{(\beta)})^{-1}$; if neither is found, set $\tau_0(\bf s) = \bf s$ and we say that $\tau_0$ is trivial for $\bf s$.
\item[(ii)] We define an operator $\tau_1$ acting on a string $\bf s$ in the following sense. Find the first $G_{\alpha\alpha}$ or $G_{\alpha\alpha}^{-1}$ in $\bf s$. If $G_{\alpha\alpha}$ is found, replace it with $\frac{G_{\alpha\beta} G_{\beta\alpha} }{G_{\beta\beta} }$; if  $G_{\alpha\alpha}^{-1}$ is found, replace it with $-\frac{G_{\alpha\beta} G_{\beta\alpha}}{G_{\alpha\alpha} G_{\alpha\alpha}^{(\beta)} G_{\beta\beta}}$; if neither is found, set $\tau_1(\bf s)=\emptyset$ and we say that $\tau_1$ is null for $\bf s$.
\item[(iii)] The operator $\rho$ 
replaces each $G_{\alpha\beta}$ in the string $\bf s$ with $G_{\alpha\alpha}G_{\beta\beta}^{(\alpha)}S_{\alpha\beta}$.
\end{itemize}
\end{defn}

By Lemma \ref{lemm_resolvent}, it is clear that for any string $\bf s$,
\begin{equation}\label{resolvent_string}
\llbracket\tau_0(\mathbf s)\rrbracket + \llbracket\tau_1(\mathbf s)\rrbracket = \llbracket\bf s\rrbracket, \ \ \llbracket\rho(\mathbf s)\rrbracket = \llbracket\mathbf s\rrbracket.
\end{equation}
Moreover, a string $\mathbf s$ is trivial under $\tau_0$ and null under $\tau_1$ if and only if $\mathbf s$ is maximally expanded.
Given a string $\bf s$, we abbreviate ${\mathbf s}_0 := \tau_0(\mathbf s)$ and ${\bf s}_1 := \rho(\tau_1(\bf s))$. For any sequence $w=a_1a_2\ldots a_m $ with $a_i\in \{0,1\}$, we denote 
$${\mathbf s}_{w}:=\rho^{a_m}\tau_{a_m}\ldots \rho^{a_2}\tau_{a_2}\rho^{a_1}\tau_{a_1}(\mathbf s), \ \ \text{ where }\rho^0:=1.$$
Then by (\ref{resolvent_string}) we have
\begin{equation}\label{resolvent_string2}
\sum_{|w|=m}\llbracket{\mathbf s}_w\rrbracket = \llbracket\mathbf s\rrbracket,
\end{equation}
where the summation is over all binary sequences $w$ with length $|w|=m$.

\begin{lem}\label{iso_lem_3} 
Consider the string $\mathbf s = ``G_{\mu\mu}G_{\nu\nu}^{(\mu)}S_{\mu\nu}"$. Let $w$ be any binary sequence with $|w|=4l_0$ and such that $\mathbf s_w \ne \emptyset$. Then either ${\cal F}_{\rm{off}}(\mathbf s_{w})\ge 2l_0$ or $\mathbf s_{w}$ is maximally expanded. 
\end{lem}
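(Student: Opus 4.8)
Here is how I would approach Lemma~\ref{iso_lem_3}.

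The plan is to follow the evolution of two nonnegative integer statistics of a string under the elementary steps prescribed by $w$: the number ${\cal F}_{n{\text{-}}max}(\mathbf s)$ of non-maximally-expanded symbols and the number ${\cal F}_{\rm{off}}(\mathbf s)$ of off-diagonal symbols. The starting point is a bookkeeping observation: comparing $\mathfrak A$ with $\mathfrak A_{\max}$, the only symbols not in $\mathfrak A_{\max}$ are the four full-resolvent diagonal entries $G_{\mu\mu},G_{\nu\nu},G_{\mu\mu}^{-1},G_{\nu\nu}^{-1}$, and these are precisely the symbols on which $\tau_0$ and $\tau_1$ can act nontrivially. In particular $\mathbf s$ is maximally expanded if and only if ${\cal F}_{n{\text{-}}max}(\mathbf s)=0$, and ${\cal F}_{n{\text{-}}max}$ can be decreased only by $\tau_0$, by exactly one unit at a time. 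Note that for the initial string one has ${\cal F}_{n{\text{-}}max}(\mathbf s)={\cal F}_{\rm{off}}(\mathbf s)=1$.

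First I would record a structural invariant: no string $\mathbf s_{w'}$ arising along the process (with $w'$ a prefix of $w$) ever contains an off-diagonal $G$-symbol $G_{\mu\nu}$ or $G_{\nu\mu}$. This follows by an easy induction on $|w'|$: the initial string $\mathbf s$ contains none; a $0$-step $\tau_0$ creates none; and in a $1$-step $\rho\circ\tau_1$, the operator $\tau_1$ produces exactly two off-diagonal $G$-symbols, both of which are then consumed by $\rho$ (which replaces every $G_{\alpha\beta}$ with $\alpha\ne\beta$ by $G_{\alpha\alpha}G_{\beta\beta}^{(\alpha)}S_{\alpha\beta}$). With this invariant the per-step changes of the pair $({\cal F}_{n{\text{-}}max},{\cal F}_{\rm{off}})$ become exactly computable from Definition~\ref{defn_stringoperator}: a nontrivial $0$-step changes it by $(-1,0)$; a $1$-step targeting a $G_{\alpha\alpha}$ changes it by $(+2,+2)$; and a $1$-step targeting a $G_{\alpha\alpha}^{-1}$ changes it by $(+3,+2)$, the extra unit in the first coordinate coming from the $G_{\alpha\alpha}^{-1}$ that $\tau_1$ regenerates in the denominator. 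The two facts that matter are that every $1$-step raises ${\cal F}_{\rm{off}}$ by exactly $2$ and raises ${\cal F}_{n{\text{-}}max}$ by at most $3$.

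To conclude I would split into two cases. If the running string becomes maximally expanded at some intermediate stage (i.e. ${\cal F}_{n{\text{-}}max}$ hits $0$), then every subsequent $\tau_0$ is trivial and every subsequent $\tau_1$ is null; since $\mathbf s_w\ne\emptyset$ by hypothesis, no $1$-step can occur afterwards, so $\mathbf s_w$ is maximally expanded, which is the first alternative. Otherwise ${\cal F}_{n{\text{-}}max}\ge 1$ throughout, so all $4l_0$ steps act nontrivially; writing $p$ for the number of $0$-steps and $r$ for the number of $1$-steps, so $p+r=4l_0$, the increments give $1\le {\cal F}_{n{\text{-}}max}(\mathbf s_w)\le 1-p+3r$, hence $p\le 3r$, hence $4l_0=p+r\le 4r$, i.e. $r\ge l_0$. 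Since ${\cal F}_{\rm{off}}(\mathbf s)=1$ and each $1$-step adds exactly $2$, this gives ${\cal F}_{\rm{off}}(\mathbf s_w)=1+2r\ge 2l_0+1\ge 2l_0$, which is the second alternative.

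The replacement-rule bookkeeping is routine; the delicate point — and the reason the length $4l_0$, a multiple of four, appears in the statement — is the asymmetry between $\tau_1$ applied to $G_{\alpha\alpha}$ and to $G_{\alpha\alpha}^{-1}$, which forces the bound ``$+3$ per $1$-step'' rather than ``$+2$'' for ${\cal F}_{n{\text{-}}max}$, so that one must combine this with $p+r=4l_0$ to squeeze out $r\ge l_0$. A secondary point worth stating carefully is that $\rho$ leaves no off-diagonal $G$-symbol behind after a $1$-step, which is exactly what makes the ${\cal F}_{\rm{off}}$-increment per $1$-step equal to exactly $2$.
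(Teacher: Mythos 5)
Your argument is correct and follows essentially the same approach as the paper: track the two statistics ${\cal F}_{n\text{-}max}$ and ${\cal F}_{\rm{off}}$, use the increment rules ($-1,0$) for a nontrivial $\tau_0$ and ($+2$ or $+3$, $+2$) for a non-null $\rho\tau_1$, and derive $r\ge l_0$ by a counting argument. The paper argues by contrapositive (assume ${\cal F}_{\rm{off}}(\mathbf s_w)<2l_0$ and deduce maximal expansion), whereas you split into cases and make explicit the invariant that $\rho$ leaves no off-diagonal $G$-symbols behind, but the content is the same.
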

\begin{proof}
It suffices to show that any nonempty string $\mathbf s_w$ with ${\cal F}_{\rm{off}}(\mathbf s_{w})< 2l_0$ is maximally expanded. By Definition \ref{defn_stringoperator}, a nontrivial $\tau_0$ reduces the number of non-maximally expanded symbols by $1$, and keeps the number of off-diagonal symbols the same; a $\rho\tau_1$ increases the number of non-maximally expanded symbols by $2$ or $3$, and increases the number of off-diagonal symbols by $2$. Hence ${\cal F}_{\rm{off}}(\mathbf s_{w})< 2l_0$ implies that there are at most $(l_0-1)$ $1$'s in $w$. Those $\rho\tau_1$ operators increase ${\cal F}_{n{\text{-}}max}$ at most by $3(l_0 - 1)$ in total.
On the other hand, there are at least $3l_0$ $0$'s in $w$, which is sufficient to eliminate all the non-maximally expanded symbols (whose number is at most $3(l_0 - 1) + 1 = 3l_0-2$ in total since ${\cal F}_{n{\text{-}}max}(\mathbf s)=1$ for the initial string). 
\end{proof}

Now we choose $l_0 = 1+1/\omega$. Then using $\Phi=O(N^{-\omega/2})$, we have
$$\sum_{|w|=4l_0}\llbracket\mathbf s_w\rrbracket \cdot \mathbf 1({\cal F}_{\rm{off}}(\mathbf s_{w})\ge 2l_0)\prec 2^{4l_0}\Phi^{2l_0}\prec N^{-1}\Phi^2.$$
By Lemma \ref{iso_lem_3}, we see that to prove (\ref{off_small}), it suffices to show that
\begin{equation}\label{maximal_bound}
\left|\mathbb E \llbracket\mathbf s_w\rrbracket\right| \prec N^{-1}\Phi^2
\end{equation}
for any maximally expanded string $\mathbf s_w$ with $|w|=4l_0$. Note that the maximally expanded string $\mathbf s_w$ thus obtained consists only of the symbols 
$$G_{\alpha\alpha}^{(\beta)}, \ (G_{\alpha\alpha}^{(\beta)})^{-1}, \ S_{\alpha\beta},\ \ \text{ with } \alpha\ne \beta \in \{\mu,\nu\}.$$
By (\ref{resolvent2}), we can replace $(G_{\alpha\alpha}^{(\beta)})^{-1}$ with
\begin{equation}\label{diag_exp1}
(G_{\alpha\alpha}^{(\beta)})^{-1} = - z - S_{\alpha\alpha}.
\end{equation}
Note that $|S_{\alpha\alpha}-N^{-1}\sum_i\sigma_i \Pi_{ii}|\prec\Phi$ by (\ref{bound_epsilon}). Then we can expand $G_{\alpha\alpha}^{(\beta)}$ as
\begin{align}
G_{\alpha\alpha}^{(\beta)}= {m_{2c}}\sum_{k=0}^{2l_0} m_{2c}^k \left(S_{\alpha\alpha}- N^{-1}\sum_i \sigma_i \Pi_{ii}\right)^k+O_{\prec}(N^{-1}\Phi^2).\label{diag_exp2}
\end{align}
We apply the expansions (\ref{diag_exp1}) and (\ref{diag_exp2}) to the $G$ symbols in $\mathbf s_w$, disregard the sufficiently small tails, and denote the resulting polynomial (in terms of the symbols $S_{\alpha\beta}$) by $P_w$. Then $P_w$ can be written as a finite sum of maximally expanded strings (or monomials) consisting of the $S_{\alpha\beta}$ symbols. Moreover, the number of such monomials depends only on $l_0$.
Hence we only need to prove that for any such monomial $M_w$, 
\begin{equation}\label{maximal_bound2}
|\mathbb E \llbracket M_w \rrbracket | \prec N^{-1}\Phi^2.
\end{equation}

Let $N_\mu$ ($N_\nu$) be the number of times that $\mu$ $(\nu)$ appears as a lower index of the $S$ symbols in $M_w$. We have $N_\mu=N_\nu=3$ for the initial string $\mathbf s =``G_{\mu\mu}G_{\nu\nu}^{(\mu)}S_{\mu\nu}"$. From Definition \ref{defn_stringoperator}, it is easy to see that the operators $\tau_0, \tau_1$ and $\rho$ do not change the parity of $N_\mu$ and $N_\nu$. The expansions (\ref{diag_exp1}) and (\ref{diag_exp2}) also do not change the parity of $N_\mu$ and $N_\nu$. This leads to the following key observation: 
\begin{equation}\label{parity}
\text{\it{both $N_\mu$ and $N_\nu$ are odd in $M_w$}}.
\end{equation}

\subsection{A graphical proof}\label{graph}

In this subsection, we finish the proof of (\ref{maximal_bound2}). Suppose $M_w=C(z)(S_{\mu\mu})^{m_1}(S_{\nu\nu})^{m_2}(S_{\mu\nu})^{m_3}(S_{\nu\mu})^{m_4}$, where $C(z)$ denotes a deterministic function of order $1$ for all $z\in \mathbf D$. Then we write
\begin{equation}\label{iso_sum1}
\begin{split}
\llbracket M_w \rrbracket \sim \sum_{i_*^{(*)},j_*^{(*)} \in \mathcal I_1} & \prod_{a=1}^{m_1} X_{i_a^{(1)}\mu}G^{(\mu\nu)}_{i_a^{(1)} j_a^{(1)}}X_{j_a^{(1)} \mu}\prod_{b=1}^{m_2} X_{i_b^{(2)}\nu}G^{(\mu\nu)}_{i_b^{(2)} j_b^{(2)}}X_{j_b^{(2)} \nu} \\
& \prod_{c=1}^{m_3} X_{i_c^{(3)}\mu}G^{(\mu\nu)}_{i_c^{(3)} j_c^{(3)}}X_{j_c^{(3)} \nu}\prod_{d=1}^{m_4} X_{i_d^{(4)} \nu}G^{(\mu\nu)}_{i_d^{(4)} j_d^{(4)}}X_{j_d^{(4)} \mu}.
\end{split}
\end{equation}
To avoid heavy expressions, we introduce the following graphical notations. We use a connected graph $(V,E)$ to represent the string $M_w$, where the vertex set $V$ consists of the indices in (\ref{iso_sum1}) and the edge set $E$ consists of the $X$ and $G$ variables. The indices $\mu,\nu$ are represented by the black vertices in the graph, while the $i,j$ indices are represented by the white vertices. The $X$ edges are represented by the zig-zag lines and the $G$ edges are represented by the straight lines. One can refer to Fig. \ref{M_1} for an example of such a graph.

\begin{figure}[htb]
\centering
\scalebox{0.8}{
\begin{tikzpicture}
 \tikzset{dot/.style={circle,fill=#1,inner sep=3,minimum size=0.5pt}}
 \tikzset{wdot/.style={circle,draw,inner sep=3,minimum size=0.5pt}}
 \tikzset{zig/.style={decoration={
    zigzag,
    segment length=4,
    amplitude=.9,post=lineto,
    post length=2pt}}}
 \node (i) [label=right:{$\mu$}, dot] {};
 \node (j) [label=left:{$\nu$}, right of=i, xshift = 20mm, dot] {};
 \node (c1) [left of=i, xshift=-6mm, yshift = 6mm, wdot] {} edge [decorate,zig]  (i);
 \node (c2) [left of=i, xshift=-7mm, yshift = -4mm, wdot] {} edge[decorate,zig] (i) edge (c1);
 \node (b1) [above of=i, xshift=3mm, yshift = 8mm, wdot] {} edge[decorate,zig] (i);
 \node (b2) [above of=j, xshift=-3mm, yshift = 8mm, wdot] {} edge[decorate,zig] (j) edge (b1);
  \node (b3) [above of=i, xshift=5mm, yshift = 0mm, wdot] {} edge[decorate,zig] (i);
 \node (b4) [above of=j, xshift=-5mm, yshift = 0mm, wdot] {} edge[decorate,zig] (j) edge (b3);
  \node (b5) [below of=i, xshift=3mm, yshift = -8mm, wdot] {} edge[decorate,zig] (i);
 \node (b6) [below of=j, xshift=-3mm, yshift = -8mm, wdot] {} edge[decorate,zig] (j) edge (b5);
 \node (d1) [right of=j, xshift=2mm, yshift = 16mm, wdot] {} edge [decorate,zig]  (j);
 \node (d2) [right of=j, xshift=7mm, yshift = 10mm, wdot] {} edge[decorate,zig] (j) edge (d1);
  \node (e1) [right of=j, xshift=2mm, yshift = -16mm, wdot] {} edge [decorate,zig]  (j);
 \node (e2) [right of=j, xshift=7mm, yshift = -10mm, wdot] {} edge[decorate,zig] (j) edge (e1);
 \node (l) [draw=black,thick,rounded corners=2pt,below left=10mm, minimum width = 75pt, minimum height = 60pt, right of=j,xshift = 40mm, yshift = 5mm]  {};
 \node (l1) [wdot, above of=l, xshift = -30pt, yshift = -12pt] {};\node (l2) [wdot, right of= l1, xshift = 5mm, label=right:{$G$}] {} edge (l1) ;
 \node (l3) [dot, below of = l1] {};\node (l4) [wdot, right of= l3, xshift = 5mm] {} edge [decorate, zig, label=right:{$X$}] (l3) ;
\end{tikzpicture}
}
\caption{The graph representing $S_{\mu\mu}(S_{\mu\nu})^3(S_{\nu\nu})^2$.}
\label{M_1}
\end{figure}

We organize the summation in (\ref{iso_sum1}) in the following way. We first partition the white vertices into blocks by requiring that any pair of white vertices take the same value if they are in the same block, and take different values otherwise. Then we take the summation over the white blocks which take values in $\mathcal I_2$. Finally, we sum over all possible partitions. Note that the number of different partitions depends only on the total number of $S$ variables in $M_w$, which in turn depends only on $l_0$.


Fix a partition $\Gamma$ of the white vertices. We denote its blocks by $b_1,...,b_{k}$, where $k$ gives the number of distinct blocks in $\Gamma$. We denote by $n^\mu_l$ ($n_l^\nu$) the number of white vertices in $b_l$ that are connected to the vertex $\mu$ ($\nu$). Let $G(\Gamma)$ be the product of all the $G$ edges in the graph. Then we have
\begin{equation}\label{iso_sum2}
\llbracket M_w\rrbracket \sim \sum_{\Gamma}\sum_{b_1,...,b_{k}}^*G(\Gamma)\prod_{l=1}^{k}(X_{b_l \mu})^{n^\mu_l}(X_{b_l \nu})^{n_l^\nu},
\end{equation}
where $\sum^*$ denotes the summation subject to the condition that $b_1,...,b_{k}$ all take distinct values. Note that $k$, $b_l$, $n_l^\mu$ and $n_l^\nu$ all depend on $\Gamma$, and we have omitted the $\Gamma$ dependence for simplicity of notations. 

From (\ref{iso_sum1}), it is easy to observe that the $X$ edges are independent of $G(\Gamma)$. Thus taking expectation of (\ref{iso_sum2}) gives that
\begin{align}
|\mathbb E \llbracket M_w\rrbracket|\le & C\sum_{\Gamma}\sum_{b_1,...,b_k}^*|\mathbb E G(\Gamma)|\prod_{l=1}^k|\mathbb E(X_{b_l \mu})^{n_l^\mu}||\mathbb E(X_{b_l \nu})^{n_l^\nu}|\nonumber\\
\le & C\sum_{\Gamma}\sum_{b_1,...,b_k}^*|\mathbb E G(\Gamma)|\prod_{l=1}^k\mathbb E|X_{b_l \mu}|^{n_l^\mu}\mathbb E|X_{b_l \nu}|^{n_l^\nu}  \mathbf 1(n_l^\mu\ne 1,n_l^\nu\ne 1).\label{iso_sum30}
\end{align}
Note that we must have $n_l^\mu+n_l^\nu \ge 2$ for $1\le l \le k$, because we only consider nonempty blocks. On the other hand, if all $n_l^\mu$ are even, then $N_\mu = \sum_{l=1}^k n_l^\mu$ must be even, which contradicts (\ref{parity}). Hence we can find some $1\le l_1 \le k$ such that $n^\mu_{l_1}$ is odd and $n^\mu_{l_1}\ge 3$. Similarly, we can also find some $1\le l_2 \le k$ such that $n^\nu_{l_2}$ is odd and $n^\nu_{l_2}\ge 3$. We abbreviate $\hat n_l^\mu:=n_l^\mu\wedge3$ and $\hat n_l^\nu:=n_l^\nu\wedge3.$
From the above discussions, we see that
\begin{equation}\label{num_est}
\frac{1}{2}\sum_{l=1}^k \left(\hat n_l^\mu+\hat n_l^\nu\right) \ge \frac{1}{2}\sum_{l\ne l_1,l_2}^k (\hat n_l^\mu+\hat n_l^\nu)+\frac{3}{2}+\frac{3}{2} \ge (k-2)+3 = k+1.
\end{equation}
Now using the moment assumption (\ref{conditionA3}), we can bound (\ref{iso_sum30}) by
\begin{align}
|\mathbb E \llbracket M_w\rrbracket|\le C\sum_{\Gamma}\sum_{b_1,...,b_k}^*|\mathbb E G(\Gamma)|N^{- \sum_{l=1}^k(\hat n_l^\mu+\hat n_l^\nu)/2} .\label{iso_sum3}
\end{align}

Next we deal with $|\mathbb E G(\Gamma)|$. We consider the following $3$ cases separately: (i) there are at least $2$ off-diagonal $G$-edges in $G(\Gamma)$; (ii) there is only $1$ off-diagonal $G$-edge in $G(\Gamma)$; (iii) there is no off-diagonal $G$-edge in $G(\Gamma)$.

In case (i), we trivially have $|\mathbb E G(\Gamma)|\prec \Phi^2$. 
In case (ii), we use the same trick as in (\ref{bound_EG1}). Let the off-diagonal $G$-edge be $G_{ij}^{(\mu\nu)}$. For each diagonal $G^{(\mu\nu)}_{kk}$, we replace it with $(G^{(\mu\nu)}_{kk}-\Pi_{kk})+\Pi_{kk} = \Pi_{kk} + O_\prec(\Phi).$
Plugging these expansions into $\mathbb EG(\Gamma)$, we obtain that $|\mathbb E G(\Gamma)|\prec \Phi^2+|\mathbb E G_{ij}^{(\mu\nu)}| \prec \Phi^2,$ where we used (\ref{EG_smaller}) in the second step. Finally, in case (iii), we have $|\mathbb EG(\Gamma)| \prec 1$. Moreover, $n_l^\mu + n_l^\nu$ is even for any $1\le l \le k$. Take $1\le l_1 , l_2 \le k$ such that $n^\mu_{l_1},n^\nu_{l_2}$ are odd and $n^\mu_{l_1}, n^\nu_{l_2}\ge 3$. If $l_1\ne l_2$, then we must have $\hat n^\mu_{l_1}+\hat n^\nu_{l_1}\ge 4,\ \hat n^\mu_{l_2} + \hat n^\nu_{l_2}\ge 4$, and hence
$$\frac{1}{2}\sum_{l=1}^k \left(\hat n_l^\mu+\hat n_l^\nu\right) \ge \frac{1}{2}\sum_{l\ne l_1,l_2}^k (\hat n_l^\mu+\hat n_l^\nu)+4 \ge k+2.$$
Otherwise, if $l_1=l_2$, then
$$\frac{1}{2}\sum_{l=1}^k \left(\hat n_l^\mu+\hat n_l^\nu\right) \ge \frac{1}{2}\sum_{l\ne l_1}^k (\hat n_l^\mu+\hat n_l^\nu)+3 \ge k+2.$$

Now applying the above estimates and (\ref{num_est}) to (\ref{iso_sum3}), we obtain that
\begin{align*}
|\mathbb E \llbracket M_w\rrbracket| &\prec \sum_{\Gamma\text{ in Case (1), (2)}}\Phi^2 N^{k - \sum_{l=1}^k(\hat n_l^\mu+\hat n_l^\nu)/2}+\sum_{\Gamma\text{ in Case (3)}}N^{k-\sum_{l=1}^k(\hat n_l^\mu+\hat n_l^\nu)/2}\\
&\le C(N^{-1}\Phi^2+N^{-2})\le CN^{-1}\Phi^2.
\end{align*}
This concludes the proof of (\ref{maximal_bound2}), and hence finishes the proof of (\ref{off_small}).

\section*{Acknowledgements}
The authors would like to thank Zongming Ma for valuable suggestions on statistical applications, which have significantly improved this paper. We are also grateful to the editors and referees for carefully reading our manuscript and suggesting several improvements.

\appendix
\section{Supplementary Material}\label{chap_supp}
In the supplementary material, we would like to give the proof of Theorem \ref{lem_EG0}, Theorem \ref{thm_large} and Lemma \ref{comp_claim}. 

For $\mathbf v,\mathbf w \in \mathbb C^{\mathcal I}$, $a\in \mathcal I$ and an $\mathcal I\times \mathcal I$ matrix $A$, we abbreviate
\begin{equation}\nonumber
A_{\mathbf{vw}}:=\langle \mathbf v,A\mathbf w\rangle, \ \ A_{\mathbf{v}a}:=\langle \mathbf v,A\mathbf e_a\rangle, \ \ A_{a\mathbf{w}}:=\langle \mathbf e_a,A\mathbf w\rangle,
\end{equation}
where $\mathbf e_a$ denotes the standard unit vector in the coordinate direction $a$. We shall call them the generalized matrix entries. We sometimes identify vectors $\mathbf v\in \mathbb C^{\mathcal I_1}$ and $\mathbf w\in \mathbb C^{\mathcal I_2}$ with their natural embeddings $\left( {\begin{array}{*{20}c}
   {\mathbf v}  \\
   0 \\
\end{array}} \right)$ and $\left( {\begin{array}{*{20}c}
   0  \\
   \mathbf w \\
\end{array}} \right)$ in $\mathbb C^{\mathcal I}$. The exact meanings will be clear from the context.

\begin{lem}\label{lemma_Im}
Given any $M\times N$ matrix $Y$, the following estimates and identities hold for $G\equiv G(Y,z)$:
\begin{equation}\label{eq_gbound}
\left\|G \right\| \le C\eta ^{ - 1} ,\ \ \left\|\partial _z G \right\|\le C\eta ^{ - 2},
\end{equation}
 for some constant $C>0$, and for $\mathbf v\in \mathbb C^{\mathcal I_1}$ and $\mathbf w\in \mathbb C^{\mathcal I_2}$, 
\begin{align}
&\sum\limits_{i \in \mathcal I_1 }  \left| {G_{\mathbf v i} } \right|^2 = \sum\limits_{i \in \mathcal I_1 }  \left| {G_{i\mathbf v} } \right|^2  = \frac{|z|^2}{\eta}\Im\left(\frac{G_{\mathbf v\mathbf v}}{z}\right)\label{eq_gsq2} \\
& \sum\limits_{i \in \mathcal I_1 } {\left| {G_{\mathbf w i} } \right|^2 } = \sum\limits_{i \in \mathcal I_1 } {\left| {G_{i\mathbf w} } \right|^2 } = {G}_{\mathbf w\mathbf w}  + \frac{\bar z}{\eta} \Im \, G_{\mathbf w\mathbf w} , \label{eq_gsq3} \\
&\sum\limits_{\mu  \in \mathcal I_2 } {\left| {G_{\mathbf w \mu } } \right|^2 } = \sum\limits_{\mu  \in \mathcal I_2 } {\left| {G_{\mu \mathbf w} } \right|^2 }  = \frac{{\Im \, G_{\mathbf w\mathbf w} }}{\eta }, \label{eq_gsq1} \\
&\sum\limits_{\mu \in \mathcal I_2 } {\left| {G_{\mathbf v \mu} } \right|^2 } = \sum\limits_{\mu \in \mathcal I_2 } {\left| {G_{\mu \mathbf v} } \right|^2 } =  \frac{{G}_{\mathbf v\mathbf v}}{z}  + \frac{\bar z}{\eta} \Im\left(\frac{{G_{\mathbf v\mathbf v} }}{z}\right) .\label{eq_gsq4} 
 \end{align}
These estimates remain true for $G^{(\mathbb T)}$ instead of $G$ for any $\mathbb T \subseteq \mathcal I$. 
\end{lem}
\begin{proof}
These estimates and identities can be proved through simple calculations using the spectral decomposition of $G$. The reader can also refer to, for example, \cite[Lemma 4.6]{Anisotropic}, \cite[Lemma 3.5]{XYY} and \cite[Lemma A.3]{NeceSuff_sample}.
\end{proof}

\subsection{Proof of Lemma \ref{comp_claim}}\label{proof_lem_EG_center}

For $z\in \mathbf D $, we have
\begin{equation}\nonumber
 G(X,z) :=  \left( {\begin{array}{*{20}c}
   { - I_{M\times M}}  & X_1+B  \\
   {X_1^*+B^*} & {- zI_{N\times N}}  \\
   \end{array}} \right)^{-1} = \left(G_1^{-1} + V\right)^{-1}, 
 \end{equation}
where we abbreviate $G_1(z) :=G(X_1,z)$ and $V := \left( {\begin{array}{*{20}c}
   {0} & B  \\
   B^* & {0}  \\
   \end{array}} \right)$.
Then we expand $G$ using the resolvent expansion
\begin{equation}\label{rsexp1}
G = G_1 - G_1 V G_1 + (G_1 V )^2 G_1 - (G_1 V )^3 G.
\end{equation}
We need to estimate the last three terms of the right-hand side.
First, note that by (\ref{eq_gsq2})-(\ref{eq_gsq4}) and (\ref{aniso_law}), we have for $z\in \mathbf D$,
\begin{equation}\label{sum_bound0}
\begin{split}
\max\Big\{\sum_{i}\left|(G_1)_{\mathbf v i}\right|^2, \sum_{i}\left|(G_1)_{i \mathbf v}\right|^2,\sum_{\mu}\left|(G_1)_{\mathbf v \mu}\right|^2,\sum_{\mu}\left|(G_1)_{\mu \mathbf v}\right|^2\Big\} \\ \prec \eta^{-1},
\end{split}
\end{equation}
for any $\mathbf v \in \mathbb C^{\mathcal I}$ and $\mathbb T \subseteq \mathcal I$ with $|\mathbb T|=O(1)$.

For any unit vectors $\mathbf u, \mathbf v \in \mathbb C^{\mathcal I}$, we have
\begin{equation}\label{exp1}
\begin{split}
\left|\langle \mathbf u, G_1 V G_1\mathbf v\rangle \right| & \le \sum_{b\in \mathcal I} \Big| \sum_{a\in \mathcal I}\left(G_1\right)_{\mathbf u a} V_{ab}\Big| |\left(G_1\right)_{b\mathbf v}| \\
& \prec \max_{b} \Big( \sum_{a\in \mathcal I} |V_{ab}|^2 \Big)^{1/2} \sum_{b\in \mathcal I}  |\left(G_1\right)_{b\mathbf v}| \\
& \prec N^{-1-c_0} \Big(\sum_{b\in \mathcal I}  |\left(G_1\right)_{b\mathbf v}|^2\Big)^{1/2} \prec N^{-1-c_0}\eta^{-1/2},
\end{split}
\end{equation}
where in the second step we used (\ref{aniso_law}) for $G_1$, in the third step the Cauchy-Schwarz inequality and (\ref{boundB}), and in the last step (\ref{sum_bound0}). With a similar argument, we obtain that
\begin{equation}\label{exp2}
\begin{split}
\left|\langle \mathbf u, (G_1 V )^2 G_1 \mathbf v\rangle \right| \prec N^{-2-2c_0}\eta^{-1}.
\end{split}
\end{equation}
Combining (\ref{exp2}) with the rough bound (\ref{eq_gbound}) for $G$, we get that
\begin{equation}\label{exp3}
\begin{split}
& \left|\langle \mathbf u, (G_1 V)^3 G\mathbf v\rangle \right| =\Big| \sum_{a,b}\left((G_1 V )^2 G_1\right)_{\mathbf u a} V_{ab} G_{b\mathbf v}\Big| \\
&\prec \left(N^{-2-2c_0}\eta^{-1}\right) \eta^{-1} \sum_a\Big(\sum_{b} |V_{ab}|^2\Big)^{1/2} \le CN^{-3/2-3c_0}\eta^{-1},\end{split}
\end{equation}
where we used $\eta \ge N^{-1}$ for $z\in \mathbf D$ in the last step. Plugging the estimates (\ref{exp1})-(\ref{exp3}) into (\ref{rsexp1}), we conclude that
\begin{equation}\label{truncate_compare}
\left|\langle \mathbf u, G \mathbf v\rangle - \langle \mathbf u, G_1 \mathbf v\rangle\right| \prec N^{-1-c_0}\eta^{-1/2}\le (N\eta)^{-1}.
\end{equation}
for all deterministic unit vectors $\mathbf u,\mathbf v \in \mathbb C^{\mathcal I}$. 

\subsection{Proof of Theorem \ref{lem_EG0}}

By Lemma \ref{comp_claim}, we can assume that the entries of $X$ are centered without loss of generality. According to the comments below Theorem \ref{lem_EG0}, we can repeat the proof in \cite{NeceSuff_sample} to get the entrywise local law \eqref{entry_law} 
and the averaged local law \eqref{aver_law}. 
Then combining (\ref{entry_law}), the moment assumption (\ref{conditionA3}) for $X$ and the arguments in Section \ref{isoq} below, we can obtain the anisotropic local law (\ref{aniso_law}) for $G(X,z)$. Hence we focus on proving the bound \eqref{ANISO_LAW2}.
In fact, (\ref{ANISO_LAW2}) clearly follows from Lemma \ref{comp_claim} and the next two lemmas combined with the polarization identity.

\begin{lem}\label{STRONG_ENTRY}
Let $X$ be an $M\times N$ real random matrix whose entries are independent random variables satisfying (\ref{entry_assmX1}), (\ref{conditionA3}), and the bounded support condition \eqref{eq_support} 
with $q\le N^{-\phi}$ for some constant $\phi>0$. If (\ref{entry_law}) and (\ref{aver_law}) 
hold uniformly in $z\in \mathbf D$, then the following local law also holds uniformly in $z\in \mathbf D$:
\begin{equation}\label{entry_law02}
\max_{r =1,2}\max_{a,b\in \mathcal I_r}\left|G_{ab}(X,z) - \Pi_{ab}(z)\right| \prec q^2+(N\eta)^{-1/2}.
\end{equation}
\end{lem}

\begin{lem}\label{ANISO_LEM}
Suppose the assumptions in Lemma \ref{STRONG_ENTRY} hold. Let $\Phi(z)$ be a deterministic function on $\mathbf D$ satisfying $c_1(N^{-1/2} + q^2) \le \Phi(z) \le N^{-c_1}$ for some constant $c_1>0$. If we have
\begin{equation}
\max_{a,b\in \mathcal I} \left|G_{ab}(z) - \Pi_{ab}(z) \right|^2 \prec \Phi, \ \ \max_{r =1,2}\max_{a,b\in \mathcal I_r} \left|G_{ab}(z) - \Pi_{ab}(z) \right| \prec \Phi , \label{aniso_lem1}
\end{equation}
uniformly in $z\in \mathbf D$, then 
\begin{equation}
\left| \langle \mathbf v, G(X,z) \mathbf v\rangle - \langle \mathbf v, \Pi (z)\mathbf v\rangle \right|  \prec \Phi(z) \label{aniso_lem2}
\end{equation}
uniformly in $z\in \mathbf D$ and deterministic unit vectors $\mathbf v \in \mathbb C^{\mathcal I_1}$ or $\mathbf v \in \mathbb C^{\mathcal I_2}$.
\end{lem}

In next two subsections, we give the proof of Lemma \ref{STRONG_ENTRY} and Lemma \ref{ANISO_LEM}. Note that if we suppose (\ref{entry_law}) holds, then using (\ref{eq_gsq2})-(\ref{eq_gsq4}) and (\ref{G_T}), it is easy to verify that for $z\in \mathbf D$,
\begin{equation}\label{sum_bound}
\max\Big\{\sum_{i}\left|G^{(\mathbb T)}_{a i}\right|^2, \sum_{i}\left|G^{(\mathbb T)}_{i a}\right|^2,\sum_{\mu}\left|G^{(\mathbb T)}_{a \mu}\right|^2,\sum_{\mu}\left|G^{(\mathbb T)}_{\mu a}\right|^2\Big\}\prec \eta^{-1},
\end{equation}
for any $a\in {\mathcal I}$ and $\mathbb T \subseteq \mathcal I$ with $|\mathbb T|=O(1)$.

\subsection{Proof of Lemma \ref{STRONG_ENTRY}}\label{subsection_aniso}

We only prove
\begin{equation}\label{entry_law2}
\max_{i,j\in \mathcal I_1} \left|G_{ij}(X,z) - \Pi_{ij} (z) \right| \prec q^2 + (N\eta)^{-1/2},
\end{equation}
where $ \Pi_{ij} = -(1+m_{2c}\sigma_i)^{-1}\delta_{ij}$. The proof for (\ref{entry_law02}) with $a,b\in \mathcal I_2$ is exactly the same. First, we recall the following large deviation bounds proved in \cite{EKYY1}. 

\begin{lem} [Lemma 3.8 of \cite{EKYY1}] \label{largederivation}
Let $(x_i)$, $(y_i)$ be independent families of centered and independent random variables, and $(A_i)$, $(B_{ij})$ be families of deterministic complex numbers. Suppose the entries $x_i$ and $y_j$ have variance $O( N^{-1})$ and satisfy (\ref{eq_support}) with $N^{-1/2} \le q\le N^{-\phi}$ for some fixed $\phi>0$. 
Then for $K=O(N)$, we have the following bounds:
\begin{align}
& \Big\vert \sum_{1\le i,j \le K} x_i B_{ij} y_j \Big\vert \prec q^2 B_d  + qB_o + \frac{1}{N}\Big(\sum_{i\ne j} |B_{ij}|^2\Big)^{{1}/{2}} , \label{larged1}\\
& \Big\vert \sum_{1\le i\ne j \le K} \bar x_i B_{ij} x_j \Big\vert \prec qB_o + \frac{1}{N}\Big(\sum_{i\ne j} |B_{ij}|^2\Big)^{{1}/{2}} , \label{larged2}\\
& \Big\vert \sum_{1\le i \le K} \left(|x_i|^2 -\mathbb E|x_i|^2\right) B_{ii} \Big\vert \prec q B_d , \label{larged3}
\end{align}
where $B_d:=\max_{i} |B_{ii} |$ and $B_o:= \max_{i\ne j} |B_{ij}|.$
\end{lem}  

In fact, these bounds are stated in slightly stronger forms in \cite{EKYY1} with a different notion for high probability events. Here we choose to present (\ref{larged1})-(\ref{larged3}) in the form of stochastic domination, which is more convenient for our use. Moreover, if we assume the fourth moment of $x_i$ is bounded for all $i$ as in (\ref{conditionA3}), then we have a better bound for the LHS of (\ref{larged3}).

\begin{lem} \label{largederivation2}
Suppose the assumptions in Lemma \ref{largederivation} hold and $x_i$, $1\le i \le K$, satisfy (\ref{conditionA3}). Then we have
\begin{align}
\Big\vert \sum_{i} \left(|x_i|^2 -\mathbb E|x_i|^2\right) B_{ii} \Big\vert \prec \left(q^2 + N^{-1/2}\right) B_d . \label{larged32}
\end{align}
\end{lem} 
\begin{proof}
We abbreviate $z_i:=\left(|x_i|^2 -\mathbb E|x_i|^2\right) B_{ii}/B_d$. By Markov's inequality, it suffices to prove that for any fixed $p\in \mathbb N$,
\begin{equation}\label{large_deviation4}
\mathbb E \Big|\sum_i z_i\Big|^{2p} \prec \left(q^2 + N^{-1/2}\right)^{2p}.
\end{equation}
Note that by the assumption, we have
\begin{equation}\label{moment_second}
\mathbb E z_i = 0, \  \ \mathbb E|z_i|^n \prec q^{2n-4}N^{-2}\text{ for fixed } n\ge 2.
\end{equation}

Now we expand the LHS of (\ref{large_deviation4}) as
\begin{equation*}
\mathbb E\Big|\sum_i z_i \Big|^{2p} = \sum_{\substack{i_1,\ldots,i_{2p}}}\mathbb E y_{i_1}\cdots y_{i_{2p}} , 
\end{equation*}
where we denote $y_{i_l}:=z_{i_l}$ for $1\le l \le p$ and $y_{i_l}:=\bar z_{i_l}$ for $p+1\le l \le 2p$. To organize the summation over the indices $i_1, \ldots , i_{2p}$, we look at the partitions $\Gamma$ of the set of the labels $\{1,...,2p\}$ according to the equivalence relation that $k,l$ are in the same class if and only if $i_k = i_l$. We use $b_l$, $1\le l \le k$, to denote the equivalence classes of $\Gamma$ and $n_l$ to denote the size of $b_l$. Obviously, $k$, $b_l$ and $n_l$ all depend on $\Gamma$, but we will omit this dependence in the following expressions. Moreover, since the random variables are centered, we must have $n_l \ge 2$ for all $l$ to attain a nonzero expectation. Hence we have
\begin{equation}\label{expandp}
\mathbb E\Big|\sum_i z_i \Big|^{2p} \le \sum_\Gamma\sum^*_{ b_1,\ldots,b_k}\mathbb E|y_{i_{b_1}}|^{n_1}\ldots \mathbb E|y_{i_{b_k}}|^{n_k},
\end{equation}
where $\sum^*$ denotes the summation subject to the conditions that $b_1, \ldots, b_k$ are all distinct, $n_l \ge 2$ for all $l$, and $\sum_{l=1}^k n_l = 2p$. Note that under these conditions, we trivially have $k\le p$.

Using (\ref{moment_second}), we obtain that
\begin{align*}
\sum^*_{b_1,\ldots,b_k}\mathbb E|y_{\mu_{b_1}}|^{n_1}\ldots \mathbb E|y_{\mu_{b_k}}|^{n_k} &\prec \sum^*_{b_1,\ldots,b_k} (q^{2n_1-4}N^{-2})\ldots (q^{2n_k-4}N^{-2})\\
& = \sum^*_{b_1,\ldots,b_k} N^{-2k} q^{4p-4k} \le C N^{-k}q^{4p-4k} .
\end{align*}
Since the number of partitions of $\{1,...,2p\}$ is finite and depends only on $p$, (\ref{expandp}) can then be bounded by
$$\mathbb E\Big|\sum_i z_i \Big|^{2p} \prec \max_{1\le k \le p} N^{-k}q^{4p-4k} \le  q^{4p} + N^{-p},$$
where in the last step, $q^{4p}$ and $N^{-p}$ can be obtained from the extreme cases $k=0$ and $k=p$, respectively. This concludes (\ref{large_deviation4}).
\end{proof}

Now using (\ref{resolvent4}) and (\ref{larged1}), we get that for $i\ne j \in \mathcal I_1$,
\begin{align}
|G_{ij}|  \prec \left|\sum_{\mu, \nu}Y_{i\mu}G_{\mu\nu}^{(ij)}Y^*_{\nu j}\right| &\prec q^2\max_{\mu}|G^{(ij)}_{\mu\mu}|+ q\max_{\mu\ne \nu}{|G^{(ij)}_{\mu\nu}|}+\Big(\frac{1}{N^2}\sum_{\mu\ne \nu}|G^{(ij)}_{\mu\nu}|^2\Big)^{1/2} \nonumber\\
&\prec q^2 + q(q+\Psi) + \left( \frac{1}{N\eta}\right)^{1/2} \prec q^2 + (N\eta)^{-1/2}, \label{off_imp}
\end{align}
where we used (\ref{entry_law}), (\ref{G_T}) and the bound (\ref{sum_bound}). For the diagonal estimate, we need to control the $Z$ variables 
\begin{equation}\label{Z_variable}
  Z_{i} :=(1-\mathbb E_{i}) G_{ii}^{-1} = \sigma_i\left(m_2^{(i)} - ( {XG^{\left( i \right)} X^*} )_{ii}\right).
\end{equation}
Using (\ref{larged2}) and (\ref{larged32}), we get that 
\begin{equation}\label{Z_bound}
\begin{split}
 |Z_{i} | & = \sigma_i \Big|\sum_{\mu}G^{(i)}_{\mu\mu}\left(|X_{i\mu}|^2 - \mathbb E|X_{i\mu}|^2\right)+\sum_{\mu\ne\nu}X_{i\mu}G_{\mu\nu}^{(i)}X^*_{\nu i} \Big| \\
& \prec  \left(q^2 + N^{-1/2}\right) + q\max_{\mu\ne \nu}{|G^{(i)}_{\mu\nu}|}+\frac{1}{N}\Big(\sum_{\mu\ne\nu}|G^{(i)}_{\mu\nu}|^2\Big)^{1/2}\\
&  \prec q^2 + (N\eta)^{-1/2},
\end{split}
\end{equation}
where we used (\ref{entry_law}), (\ref{G_T}) and (\ref{sum_bound}) again. Then with (\ref{resolvent2}), we get that
\begin{align*}
G_{ii} - \Pi_{ii} & = \frac{1}{-1-\sigma_i m_{2c} - \sigma_i(m_2^{(i)}-m_{2c}) + Z_i} - \frac{1}{-1-\sigma_i m_{2c}}\\
&= O_\prec\left(q^2 + (N\eta)^{-1/2}\right)\nonumber
\end{align*}
where in the second step we used (\ref{Z_bound}), (\ref{aver_law}) and (\ref{G_T}) (with $\tilde \Phi=q+\Psi$).
Together with (\ref{off_imp}), we conclude (\ref{entry_law2}).

\subsection{Proof of Lemma \ref{ANISO_LEM}}\label{isoq}

We only prove (\ref{aniso_lem2}) for $\mathbf v\in \mathbb C^{\mathcal I_1}$. The proof for the case with $\mathbf v\in \mathbb C^{\mathcal I_2}$ is exactly the same. Note that by (\ref{aniso_lem1}), we immediately get $\sum_{i}|v_i|^2 \left(G_{ii} - \Pi_{ii}\right)\prec\Phi$. Hence it remains to prove that
\begin{equation*}
\sum_{i\ne j} \bar v_i v_j G_{ij} \prec \Phi.
\end{equation*}
By Markov's inequality, it suffices to show that
\begin{equation}\label{Markov_iso}
\mathbb E\Big|\sum_{i\ne j} \bar v_i v_j G_{ij}\Big|^{2p} \prec \Phi^{2p}
\end{equation}
for any fixed $p\in \mathbb N$. The proof of (\ref{Markov_iso}) is similar to the ones in \cite[Section 5]{isotropic} and \cite[Section 5]{XYY}. The main difference is that in \cite{isotropic,XYY}, the matrix entries are assumed to have arbitrarily high moments, while here we assume that the $X$ entries have finite third moment and support bounded by $q$. In particular, for any fixed $n\ge 3$, we have
\begin{equation}\label{moment_n3}
\mathbb E |X_{i\mu}|^n \prec q^{n-3}N^{-3/2}, \ \ i\in \mathcal I_1, \ \mu \in \mathcal I_2.
\end{equation}
(Note that we have a stronger moment assumption in (\ref{conditionA3}). However, the finite fourth moment condition will not be used in the proof below. We only need the weaker bound (\ref{moment_n3}).) We remark that some of the basic ideas have been illustrated in Section \ref{proof_lem_EG} of the main article.

We first rewrite the product in (\ref{Markov_iso}) as
\begin{align*}
 \Big| {  \sum\limits_{i \ne j} {{\bar v_i} {G_{ij}} {v_j}} } \Big|^{2p} =& \sum\limits_{ {i_k \ne j_k} \in {{\cal I}_1}}\prod\limits_{k = 1}^{p} {{\bar v_{i_k}} {G_{i_kj_k}} {v_{j_k}}}  \cdot \prod\limits_{k = p + 1}^{2p} \overline {{\bar v_{i_k}} {G_{i_kj_k}} {v_{j_k}}}\\
=&\sum_\Gamma\sum_{b_1,...,b_n}^*\prod\limits_{k = 1}^{p} {{\bar v_{\Gamma(i_k)}} {G_{\Gamma(i_k)\Gamma(j_k)}} {v_{\Gamma(j_k)}}}  \cdot \prod\limits_{k = p + 1}^{2p} \overline {{\bar v_{\Gamma(i_k)}} {G_{\Gamma(i_k)\Gamma(j_k)}} {v_{\Gamma(j_k)}}},
\end{align*}
where 
$\Gamma$ ranges over all partitions of the set of labels $\{i_1,...,i_{2p},j_1,...,j_{2p}\}$ with the restriction that $i_k, j_k$ cannot be in the same equivalence class for all $k$, $\{b_1,...,b_n\}$ is the set of equivalence classes for a fixed $\Gamma$, $\Gamma(\cdot)$ is regarded as a mapping from the set of labels to the set of equivalence classes, and $\sum^*$ denotes the summation subject to the condition that $b_1,\ldots, b_n$ all take distinct values and $\Gamma(i_k) \ne \Gamma(j_k)$ for all $k$. Since the number of such partitions $\Gamma$ is finite and depends only on $p$, it suffices to prove that for any fixed $\Gamma$,
\begin{equation}\label{temp1}
\mathbb E\sum_{b_1,...,b_n}^*\prod\limits_{k = 1}^{p} {{\bar v_{\Gamma(i_k)}} {G_{\Gamma(i_k)\Gamma(j_k)}} {v_{\Gamma(j_k)}}}  \cdot \prod\limits_{k = p + 1}^{2p} \overline {{\bar v_{\Gamma(i_k)}} {G_{\Gamma(i_k)\Gamma(j_k)}} {v_{\Gamma(j_k)}}}\prec \Phi^{2p}.
\end{equation}
We abbreviate 
$$P(b_1,...,b_n) := \prod\limits_{k = 1}^{p} { { G_{\Gamma(i_k)\Gamma(j_k)}} }  \cdot \prod\limits_{k = p + 1}^{2p} \overline { {G_{\Gamma(i_k)\Gamma(j_k)}} }.$$
For simplicity, we shall omit the overline for complex conjugate in the following proof. In this way, we can avoid a lot of immaterial notational complexities that do not affect the proof.

For $k=1,...,n$, we denote by $\deg(b_k, P)$ the number of times that $b_k$ appears as an index of the $G$ entries in $P$, i.e. $\deg(b_k, P):= |\Gamma^{-1}(b_k)|$. We define $h:=\#\{1\le k \le n: \deg(b_k, P) = 1\}$, i.e. $h$ is the number of $b_k$'s that only appear once in the indices of $P$. Without loss of generality, we assume these $b_k$'s are $b_1,...,b_h$. Then we have the following properties:
\begin{equation}\label{deg1}
\sum_{k=1}^n\deg(b_k,P)=4p, \ \ \text{and} \ \ \deg(b_k,P)=1 \ \text{ for }k=1,...,h.
\end{equation}
Now we claim that
\begin{equation}\label{main_bound}
\left|\mathbb E P\right|\prec N^{-h/2}\Phi^{2p}.
\end{equation}
Note that by $\|\mathbf v\|_2=1$ and Cauchy-Schwarz inequality, we have $\sum_i |v_i| \le \sqrt{M}$ and $\sum_i |v_i|^n \le 1$ for $n\ge 2$. Then if (\ref{main_bound}) holds, we can bound the left hand side of (\ref{temp1}) by 
$$N^{-h/2}\Phi^{2p}\prod_{k=1}^n\left(\sum_{b_k}|v_{b_k}|^{\deg(b_k, P)}\right)\le N^{-h/2}\Phi^{2p}(\sqrt{M})^{h} \le C\Phi^{2p}.$$
Hence it suffices to prove (\ref{main_bound}). 

We define the $S$ variables as 
\begin{equation}\label{S_var}
S_{ij} := (YG^{(L)}Y^*)_{ij},
\end{equation} 
for $i,j\in\mathcal I_1$ and $L:=\{b_1,...,b_n\}$. As in (\ref{off_imp}) and (\ref{Z_bound}), we can verify that $|S_{ij}-\sigma_i m_{2c}\delta_{ij}|\prec \Phi$ for $i,j\in \mathcal I_1$ using (\ref{aniso_lem1}), (\ref{G_T}) and Lemmas \ref{largederivation}-\ref{largederivation2}. Then as in Section 3.3 of the main article, we keep expanding the $G$ entries in $P$ using the resolvent expansions in Lemma \ref{lemm_resolvent}, until each monomial in the expression either consists of $S$ variables only or has sufficiently many off-diagonal terms. The following lemma 
has been proved in \cite[Lemma 5.9]{isotropic} and \cite[Lemma 5.9]{XYY}.

\begin{lem}\label{lem_exp}
After finitely many expansions, we can write $P$ as
\begin{equation}\label{exp_simple}
P = \sum_{\alpha =1}^{A} c_\alpha Q_\alpha +O_{\prec}(N^{-h/2}\Phi^{2p}),
\end{equation}
where $A \in \mathbb N$ depends only on $p$ and $c_1$ (recall that $\Phi(z) \le N^{-c_1}$ by our assumption), $c_\alpha $'s are constants of order $O(1)$, and $Q_\alpha $ are monomials of $S$ variables only, where the number of $S$ variables in each $Q_\alpha$ depends only on $p$ and $c_1$. Moreover, we have that
\begin{equation}\label{parity2}
\deg_o(b_k, Q_\alpha) \ge \deg_o(b_k, P), \ \ \deg_o(b_k, Q_\alpha) = \deg_o(b_k, P) \mod 2,
\end{equation}
for $k=1,...,n$ and $\alpha= 1,...,A$, and the number of off-diagonal $S$ variables in $Q_\alpha$ is at least $2p$. Here $\deg_o(b_k, Q_\alpha)$ denotes the number of times that $b_k$ appears as an index of the \textit{off-diagonal} $S$ variables in $Q_\alpha$, and $\deg_o(b_k, P) := \deg(b_k, P)$ (which is consistent with the previous definition since $P$ only contains off-diagonal entries).
\end{lem}

Now given the expansion in (\ref{exp_simple}), we see that to conclude (\ref{main_bound}), it suffices to show that for any $Q_\alpha$,
\begin{equation}\label{main_bound2}
\left|\mathbb E Q_\alpha\right| \prec N^{-h/2}\Phi^{2p}. 
\end{equation}
In the following proof, we fixe one such $Q\equiv Q_\alpha$ and write 
\begin{align*}
Q & = \prod_{j=1}^JS_{b_{k_j}b_{l_j}} =\sum_{\substack{\mu_j,\nu_j\in\mathcal I_2}}\prod_{j=1}^J Y_{b_{k_j}\mu_j}G^{(L)}_{\mu_j\nu_j}Y^*_{\nu_j b_{l_j}}\\
&=\sum_W\sum_{w_1,...,w_m}^*\prod_{j=1}^J Y_{b_{k_j}W(\mu_j)}G^{(L)}_{W(\mu_j)W(\nu_j)}Y_{b_{l_j}W(\nu_j) }
\end{align*}
where $J$ is the number of $S$-variables in $Q$, $W$ ranges over all partitions of the set of indices $\{\mu_1,...,\mu_J,\nu_1,...,\nu_J\}$, $\{w_1,...,w_m\}$ denotes the set of equivalence classes for a particular $W$, $W(\cdot)$ is regarded as a symbolic mapping from the set of indices to the set of equivalence classes, and $\sum^*$ denotes the summation subject to the condition that $w_1,\ldots, w_m$ all take distinct values. Note that the number of partitions $W$ depends only on $J$. For a fixed partition $W$, we denote 
$$R(w_1,...,w_m;W):= \prod_{j=1}^J X_{b_{k_j}W(\mu_j)}G^{(L)}_{W(\mu_j)W(\nu_j)}X_{ b_{l_j}W(\nu_j)}.$$
Then to prove (\ref{main_bound2}), it suffices to show that
\begin{equation}\label{main_bound3}
\left|\mathbb E R(w_1,...,w_m;W)\right|\prec N^{-m-h/2}\Phi^{2p}.
\end{equation}
for any partition $W$. 


To facilitate the proof, we introduce the graphical notations as in Section 3.4 of the main article. We use a connected graph $(V,E)$ to represent $R$, where the vertex set $V$ consists of black vertices $b_1,\ldots, b_n$ and white vertices $w_1,\ldots,w_m$, and the edge set $E$ consists of $(k,\alpha)$ edges representing $X_{b_{k}w_{\alpha}}$ and $(\alpha,\beta)$ edges representing $G^{(L)}_{w_{\alpha}w_{\beta}}$. We denote 
$$e_{k\alpha}:=\text{number of }(k, \alpha)\text{ edges in }R, \ \ d_\alpha:=\text{number of }(\alpha,\alpha)\text{ edges in }R.$$
Note that to attain a nonzero expectation, we must have 
\begin{equation}\label{ka3}
e_{k\alpha}=0 \ \text{ or } \  e_{k\alpha}\ge 2 \ \ \text{ for all } k,\alpha.
\end{equation}
We also define 
$$e_{k\alpha}^{(o)}:=\text{number of }(k, \alpha)\text{ edges that are from off-diagonal }S \text{ in } Q.$$
Then we have 
\begin{equation}\label{off_e}\sum_\alpha e_{k\alpha}^{(o)} = \deg_o(b_k, Q)\end{equation}

By (\ref{deg1}), (\ref{ka3}) and the parity conservation due to (\ref{parity2}), there exist edges $(1,\alpha_1),...,(h,\alpha_h)$ such that $e_{k\alpha_k}$ is odd and $e_{k\alpha_k}\ge 3$, $1\le k \le h$. Let $H:=\{(1,\alpha_1),...,(h,\alpha_h)\}$ be the set of these edges. Denote by $F$ the set of $(k,\alpha)$ edge such that $e_{k\alpha} \ge 2$ and $(k,\alpha)\notin H$. Denote
$$s_\alpha :=\sum_{k=1}^n e_{k\alpha},\ \ h_{k\alpha}:=\mathbf 1_{(k,\alpha)\in H},\ \ h_\alpha:=\sum_{k=1}^n h_{k\alpha},\ \ f_\alpha:=\sum_{k=1}^n\mathbf 1_{(k,\alpha)\in F}$$
for all $k=1,...,n$ and $\alpha=1,...,m$. By the above definitions, we have $s_\alpha \ge 2$ and $h_\alpha +f_\alpha>0$ (since the classes $w_\alpha$ are nonempty), $s_\alpha \ge 2d_\alpha$, and
\begin{equation}\label{sum_h}
\sum_{\alpha}h_{k\alpha} = \mathbf 1(1\le k \le h), \ \ \sum_\alpha h_\alpha=h.
\end{equation}


Note that there are $\frac{1}{2}\sum_{k,\alpha}e_{k\alpha} - \sum_\alpha d_\alpha$ off-diagonal $G$ edges in $R$. Hence by (\ref{aniso_lem1}) and (\ref{moment_n3}), we have
\begin{align*}
|\mathbb E R| &\prec\prod_{\alpha=1}^m\Big(\Phi^{-d_\alpha}\prod_{k=1}^n \Phi^{\frac{1}{2}e_{k\alpha}}\mathbb E |X_{b_kw_\alpha}|^{e_{k\alpha}} \Big)\nonumber\\
&\prec \prod_{\alpha=1}^m\Phi^{s_\alpha/2 - d_\alpha}\Big(\prod_{(k,\alpha)\in H} q^{e_{k\alpha}-3}N^{-3/2}\Big)\Big(\prod_{(k,\alpha)\in F} q^{e_{k\alpha}-2}N^{-1}\Big) =: \prod_{\alpha=1}^m R_\alpha.
\end{align*}
Now we consider the following four cases for $R_\alpha$.
\begin{itemize}
\item[(i)] $d_\alpha=0$. In this case we have
\begin{align*}
R_\alpha &\prec\Phi^{s_\alpha/2}\prod_{(k,\alpha)\in H} N^{-3/2}\prod_{(k,\alpha)\in F}N^{-1} = \Phi^{s_\alpha/2}(N^{-1})^{h_\alpha + f_\alpha}N^{-h_\alpha/2}\\
&\prec\Phi^{s_\alpha/2}N^{-1}N^{-h_\alpha/2} \prec \Phi^{\sum_{k=1}^h h_{k\alpha}/2+\sum_{k=h+1}^n e_{k\alpha}^{(o)}/2}N^{-1}N^{-h_\alpha/2},
\end{align*}
where in the third step we used $h_l+f_l > 0$, and in the fourth step we used
$$s_\alpha \ge\sum_k e_{k\alpha}^{(o)} \ge \sum_{k=1}^h h_{k\alpha} +\sum_{k=h+1}^n e_{k\alpha}^{(o)},$$
where we used that $e_{k\alpha}^{(o)}\ge h_{k\alpha}$ for $1\le k \le h$ (recall that if $(k,\alpha)\in H$, then $e_{k\alpha_k}$ is odd and hence at least one of the edges must come from the off-diagonal $S$).
 
\item[(ii)]$d_\alpha \ne 0$, $h_\alpha =1$ and $f_\alpha=0$. Then there is only one $k$ such that $e_{k\alpha}>0$ and $s_\alpha =e_{k\alpha}$ is odd. Hence we have $s_\alpha /2 \ge d_\alpha + 1/2$ and we can bound $R_\alpha$ as
\begin{align*}
R_\alpha &\prec\Phi^{\frac{1}{2}s_\alpha-d_\alpha}(N^{-1})^{h_\alpha+f_\alpha}N^{-h_\alpha/2}\prec \Phi^{1/2}N^{-1}N^{-h_\alpha/2} \\
&= \Phi^{\sum_{k=1}^h h_{k\alpha}/2+\sum_{k=h+1}^n e_{k\alpha}^{(o)}/2}N^{-1}N^{-h_\alpha/2},
\end{align*}
where in the last step we used
$$1 = \sum_{k=1}^h h_{k\alpha} +\sum_{k=h+1}^n e_{k\alpha}^{(o)} , $$
since all the summands except one $h_{k\alpha}$ are $0$. 

\item[(iii)]$d_\alpha\ne 0$, $h_\alpha=0$ and $f_\alpha=1$. Then there is only one $k$ such that $e_{k\alpha}>0$ and $s_\alpha=e_{k\alpha}$. Thus the  $(\alpha,\alpha)$ edges are expanded from the diagonal $S$ variables (otherwise $\alpha$ must connect to at least two different $k$'s), which implies $\frac{1}{2}s_\alpha - d_\alpha = \frac{1}{2}e_{k\alpha}^{(o)}$. Then we can bound $R_\alpha$ by
\begin{align*}
R_\alpha & \prec\Phi^{\frac{1}{2}s_\alpha - d_\alpha}(N^{-1})^{h_\alpha +f_\alpha }N^{-h_\alpha /2}= \Phi^{\sum_ke_{k\alpha}^{(o)}/2}N^{-1}N^{-h_\alpha/2} \\
& \prec \Phi^{\sum_{k=1}^h h_{k\alpha}/2+\sum_{k=h+1}^n e_{k\alpha}^{(o)}/2}N^{-1}N^{-h_\alpha/2} 
\end{align*}
where, as in Case (i), we used $e_{k\alpha}^{(o)}\ge h_{k\alpha}$ for $1\le k \le h$.

\item[(iv)]$d_\alpha\ne 0$ and $h_\alpha+f_\alpha \ge 2$. Then using $s_\alpha \ge 2d_\alpha$, $q\prec\Phi^{1/2}$ and $N^{-1/2}\prec\Phi$, we get that
\begin{align*}
R_\alpha &\prec\prod_{(k,\alpha)\in H} \Phi^{e_{k\alpha}/2-3/2}N^{-3/2}\prod_{ (k,\alpha)\in F } \Phi^{e_{k\alpha}/2-1}N^{-1} \\
& \prec\prod_{(k,\alpha)\in H} \Phi^{e_{k\alpha}/2-1/2}N^{-1}\prod_{ (k,\alpha)\in F } \Phi^{e_{k\alpha}/2}N^{-1/2}\\
&= \Phi^{(s_\alpha - h_\alpha)/2}N^{- (h_\alpha+f_\alpha)/2}N^{-h_\alpha/2} \le \Phi^{(s_\alpha-h_\alpha)/2}N^{-1}N^{-h_\alpha/2} \\
& \le \Phi^{\sum_{k=1}^h h_{k\alpha}/2+\sum_{k=h+1}^n e_{k\alpha}^{(o)}/2}N^{-1}N^{-h_\alpha/2}
\end{align*}
where in the last step we used the definitions of $s_\alpha$ and $h_\alpha$, $e_{k\alpha} \ge 2h_{k\alpha}$ for $1\le k \le h$ (since $e_{k\alpha}\ge 3$ whenever $h_{k\alpha}=1$), and $h_{k\alpha}=0$ for $k\ge h+1$.
\end{itemize}

 



Combining the above four cases, we obtain that 
\begin{align*}
|\mathbb ER| \prec \prod_{\alpha=1}^m R_\alpha \prec N^{-m}N^{-\frac{1}{2}\sum_\alpha h_\alpha} \Phi^{\sum_\alpha \left(\sum_{ k=1}^h h_{k\alpha}/2+\sum_{k=h+1}^n e_{k\alpha}^{(o)}/2\right)}.
\end{align*}
Recall that $\sum_\alpha h_\alpha = h$. Then to prove (\ref{main_bound3}), it remains to show that
\begin{equation}\label{p_toshow}
\sum_\alpha\left(\sum_{ k=1}^h h_{k\alpha} +\sum_{k=h+1}^n e_{k\alpha}^{(o)}\right) \ge 4p.
\end{equation}
For $k=1,...,h$, using (\ref{sum_h}) and (\ref{deg1}) we get that
$$\sum_{\alpha=1}^m h_{k\alpha} = 1 = \deg(b_k,P).$$
For $k=h+1,...,n$, using (\ref{off_e}) and (\ref{parity2}) we get that
$$\sum_{\alpha=1}^m e_{k\alpha}^{(o)} = \deg_o(b_k, Q)\ge \deg(b_k,P) .$$
With (\ref{deg1}), we then conclude (\ref{p_toshow}), which finishes our proof.

\subsection{Proof of Theorem \ref{thm_large}}\label{Section_comparison}
In this subsection, we prove Theorem \ref{thm_large}. By Lemma \ref{comp_claim}, we can assume that the entries of $X$ are centered without loss of generality. 

Our main strategy for the proof is a resolvent comparison method that was developed in \cite[Section 6]{LY}. Given $X$ satisfying the assumptions in Theorem \ref{thm_large}, we first construct a random matrix $\tilde X$ whose entries have the same first four moments as those of $X$ but have size of order $N^{-1/2}$.

\begin{lem} [Lemma 5.1 of \cite{LY}]\label{lem_decrease}
Suppose $X$ satisfies the assumptions in Theorem \ref{thm_large}. Then there exists another matrix $\tilde{X}=(\tilde X_{i\mu})$ such that $\mathbb P(\max_{i,\mu}|\tilde X_{i\mu}| \le CN^{-1/2}) = 1$ for some constant $C>0$ and the first four moments of the entries of $X$ and $\tilde{X}$ match, i.e.
\begin{equation}\label{match_moments}
\mathbb EX_{i\mu}^k =\mathbb E\tilde X_{i\mu}^k, \ \ k=1,2,3,4.
\end{equation}
\end{lem}

Taking $q=N^{-1/2}$ in (\ref{Eaniso_law0}), we see that (\ref{Eaniso_law}) holds for $G(\tilde X,z)$. Then due to (\ref{match_moments}), we expect that $G(X,z)$ has ``similar" properties as $G(\tilde X,z)$, so that (\ref{Eaniso_law}) also holds for $G(X,z)$. This will be proved through a resolvent comparison approach that is developed in \cite[Sections 6]{LY} and \cite[Section 6]{NeceSuff_sample}. More specifically, we will apply the Lindeberg replacement strategy, i.e., we change $\tilde X$ to $X$ entry by entry and show that the error (due to the resolvent expansion) appears at each step is negligible. In this subsection, we introduce some notations that will simplify the presentation of the proof.

Fix a bijective ordering map $\Phi$ on the index set of $X$,
\begin{equation*}
\Phi: \{(i,\mu): i \in \mathcal I_1, \mu \in \mathcal I_2 \} \rightarrow \{1,\ldots,\gamma_{\max}=MN\}.
\end{equation*} 
For any $1\le \gamma \le \gamma_{\max}$, we define the matrix $X^{\gamma}= (X^{\gamma}_{i\mu})$ such that $X_{i\mu}^{\gamma} =X_{i\mu} $ if $\Phi(i,\mu)\leq \gamma$, and $X_{i\mu}^{\gamma} =\tilde{X}_{i\mu}$ otherwise. Note that $X^0=\tilde X$, $X^{\gamma_{\max}}=X$, and $X^\gamma$ has bounded support $q\le N^{-\phi}$ for all $0\le \gamma \le \gamma_{\max}$. Correspondingly, we define
 \begin{equation}\label{Hgamma}
   H^{\gamma} := \left( {\begin{array}{*{20}c}
   { 0 } & Y^\gamma  \\
   {(Y^\gamma)^*} & {0}  \\
   \end{array}} \right), \ \ G^\gamma:= \left( {\begin{array}{*{20}c}
   { - I_{M\times M}} & Y^\gamma  \\
   {(Y^\gamma)^*} & { - zI_{N\times N}}  \\
\end{array}} \right)^{-1},
 \end{equation}
 where $Y^\gamma: =\Sigma^{1/2}X^\gamma$. Note that $H^{\gamma}$ and $H^{\gamma-1}$ differ only at $(i,\mu)$ and $(\mu,i)$ entries, where $\Phi(i,\mu) = \gamma$. Then we define two $\mathcal I\times \mathcal I$ matrices $V$ and $W$ by
$$V_{ab}= \sqrt{\sigma_i}(\delta_{ai}\delta_{b\mu} + \delta_{a\mu}\delta_{bi})X_{i\mu}, \ \ W_{ab}=\sqrt{\sigma_i}(\delta_{ai}\delta_{b\mu} + \delta_{a\mu}\delta_{bi}) \tilde X_{i\mu},$$
such that $H^{\gamma}$ and $H^{\gamma-1}$ can be written as
\begin{equation}\label{Lind_H}
H^\gamma= Q + V, \ \ H^{\gamma-1} = Q+W,
\end{equation}
for some $\mathcal I\times \mathcal I$ matrix $Q$ satisfying $Q_{i\mu}=Q_{\mu i}=0$. 

For simplicity, for any $1\le \gamma \le \gamma_{\max}$, we denote the resolvents by
\begin{equation}
S^\gamma :=G^\gamma, \  \  T^\gamma :=G^{\gamma-1}, \ \ R^\gamma :=\left(Q-\left( {\begin{array}{*{20}c}
   {  I_{M\times M}} & 0  \\
   {0} & { z I_{N\times N}}  \\
\end{array}} \right)\right)^{-1}.  \label{R}
\end{equation}
We often omit the superscript if $\gamma$ is fixed. By (\ref{Lind_H}), we can write
\begin{align}
S= \left(Q -\left( {\begin{array}{*{20}c}
   {  I_{M\times M}} & 0  \\
   {0} & { z I_{N\times N}}  \\
\end{array}} \right) + V\right)^{-1}=(1+RV)^{-1}R.  \label{RESOLVENT}
\end{align}
Thus we can expand $S$ using the resolvent expansion
\begin{equation}
S=R-RVR+(RV)^2R+\ldots+(-1)^m(RV)^m R+(-1)^{m+1}(RV)^{m+1}S. \label{RESOLVENTEXPANSION}
\end{equation}
On the other hand, we can also expand $R$ in terms of $S$:
\begin{equation}
R=(1-SV)^{-1}S=S+SVS+(SV)^2 S+\ldots+ (SV)^m S+ (SV)^{m+1}R. \label{RESOLVENTEXPANSION2}
\end{equation}
We can get similar expansions for $T$ and $R$ by replacing $V$, $S$ with $W$, $T$ in (\ref{RESOLVENTEXPANSION}) and (\ref{RESOLVENTEXPANSION2}). 

By the bounded support conditions for $X$ and $\tilde X$, we have
\begin{equation}
\max_{a,b\in \mathcal I} \vert V_{ab} \vert =\sqrt{\sigma_i} |X_{i\mu}| \prec N^{-\phi}, \ \ \max_{a,b\in \mathcal I} \vert W_{ab} \vert = \sqrt{\sigma_i}|\tilde X_{i\mu}| \le C N^{-1/2}. \label{5BOUND2}
\end{equation}
Note that $S$, $R$ and $T$ satisfy the following deterministic bounds by (\ref{eq_gbound}):
\begin{equation}
\sup_{z\in \mathbf D} \max_{\gamma} \max \left\{ \| S^\gamma \|, \|T^\gamma \|, \|R^\gamma \| \right\}\le \sup_{z\in \mathbf D} (C\eta^{-1}) \le N. \label{5BOUNDT}
\end{equation}
Then using expansion (\ref{RESOLVENTEXPANSION2}) in terms of $T,W$ with $m=3$, the isotropic local law (\ref{aniso_law}) for $T$, and the bound (\ref{5BOUNDT}) for $R$, we can get that for any fixed unit vectors $\mathbf u,\mathbf v \in \mathbb C^{\mathcal I}$, $|R_{\mathbf u\mathbf v}| =O(1)$ with high probability. Thus there exists a uniform constant $C_1>0$ such that with high probability, 
\begin{equation}
\sup_{z\in \mathbf D} \max_{\gamma} \sup_{\text{deterministic unit } \mathbf u,\mathbf v} \max \left\{ | S^\gamma_{\mathbf u \mathbf v} |, |T^\gamma_{\mathbf u \mathbf v}|, |R^\gamma_{\mathbf u \mathbf v}| \right\}\leq C_1. \label{5BOUND1}
\end{equation}

From the definitions of $V$ and $W$, one can see that it is helpful to introduce the following notations to simplify the expressions. 

\begin{defn}[Matrix operators $*_\gamma$] \label{def_operator1}
For $\mathcal I \times \mathcal I$ matrices $A$ and $B$, we define $A *_{\gamma} B$ as
\begin{equation}
(A*_{\gamma}B)_{ab}=A_{a i}B_{\mu b}+A_{a\mu}B_{ib}, \ \ \Phi(i,\mu)=\gamma.
\end{equation} 
We denote the $m$-th power of $A$ under $*_\gamma$-product by $A^{*_\gamma m}$, i.e.,
\begin{equation}
A^{*_\gamma m}:=\underbrace{A*_\gamma A*_\gamma A*_\gamma \ldots*_\gamma A}_{m}.
\end{equation}
\end{defn}

\begin{defn} [$\mathcal{P}_{\gamma, \mathbf{k}}$ and $\mathcal{P}_{\gamma,k}$] \label{def_operator2}
For $k\in \mathbb N$, $\mathbf{k}=(k_1, \cdots, k_s) \in \mathbb{N}^s$ and $1\le  \gamma \le \gamma_{\max}$, we define 
\begin{equation}
\mathcal{P}_{\gamma, k} G_{\mathbf u\mathbf v} := G_{\mathbf u\mathbf v}^{*_{\gamma}(k+1)}, \ \ \mathcal{P}_{\gamma,\mathbf{k}}\left(\prod_{t=1}^s G_{\mathbf u_t \mathbf v_t}\right):=\prod_{t=1}^s \mathcal{P}_{\gamma, k_t} G_{\mathbf u_t \mathbf v_t},
\end{equation}
where we abbreviate $G_{\mathbf u\mathbf v}^{*_{\gamma}(k+1)} \equiv (G^{*_{\gamma}(k+1)})_{\mathbf u\mathbf v}$. If $\mathfrak G_1$ and $\mathfrak G_2$ are products of resolvent entries as above, then we define
\begin{equation}
 \mathcal{P}_{\gamma,\mathbf{k}} (\mathfrak G_1+\mathfrak G_2):= \mathcal{P}_{\gamma,\mathbf{k}}\mathfrak G_1 +  \mathcal{P}_{\gamma,\mathbf{k}}\mathfrak G_2.
\end{equation}
Note that $ \mathcal{P}_{\gamma, k}$ and $ \mathcal{P}_{\gamma,\mathbf{k}}$ are not linear operators, but just notations we use for simplification.
\end{defn}

Using Definition \ref{def_operator2}, we may write, for example,
$$\mathcal{P}_{\gamma,\mathbf{k}}\left(\prod_{t=1}^s G^\gamma_{\mathbf u_t \mathbf v_t}\right):=\prod_{t=1}^s S^{*_\gamma (k_t+1)}_{\mathbf u_t \mathbf v_t}, \ \ \mathcal{P}_{\gamma,\mathbf{k}}\left(\prod_{t=1}^s G^{\gamma-1}_{\mathbf u_t \mathbf v_t}\right):=\prod_{t=1}^s T^{*_\gamma (k_t+1)}_{\mathbf u_t \mathbf v_t}.$$
For $k, s \in \mathbb{N}$ and $\mathbf{k} \in \mathbb{N}^{s+1}$, it is easy to verify that
\begin{equation}
G^{*_\gamma s} *_\gamma  G^{*_\gamma k}=G^{*_\gamma (s+k)}, \ \ \ \mathcal{P}_{\gamma, \mathbf{k}}(\mathcal{P}_{{\gamma},s}G_{\mathbf u\mathbf v})=\mathcal{P}_{\gamma, s+ \vert \mathbf{k} \vert} G_{\mathbf u\mathbf v} ,\label{FIRST}
\end{equation}
where $\vert \mathbf{k} \vert=\sum_{t=1}^s k_t$. For the second equality, note that $\mathcal{P}_{{\gamma},s}G_{\mathbf u \mathbf v}$ is a sum of the products of the $G$ entries, where each product contains $s+1$ entries.

\begin{proof}[Proof of Theorem \ref{thm_large}]
Now we prove (\ref{Eaniso_law}) with the resolvent comparison method. The basic idea is that we expand $S$ and $T$ in terms of $R$ by repeatedly applying the expansions (\ref{RESOLVENTEXPANSION}) and (\ref{RESOLVENTEXPANSION2}), and then compare the resulting expressions. The main terms will cancel since $X_{i\mu}$ and $\tilde X_{i\mu}$ have the same first four moments, and the error terms are small since $X_{i\mu}$ and $\tilde X_{i\mu}$ have support bounded by $N^{-\phi}$.

The proof of Lemma \ref{Greenfunctionrepresent} is almost the same as the one for \cite[Lemma 6.5]{LY}. In fact, we can copy their arguments almost verbatim, except for some notational differences. Hence we omit the details. In the following expressions, for any $\mathbf k=(k_1, \ldots, k_p)\in \mathbb N^p$, we use $|\mathbf k| = \sum k_i$ to denote its $l^1$-norm. 

\begin{lem} \label{Greenfunctionrepresent} 
Suppose $z\in \mathbf D$ and $\gamma=\Phi(i,\mu)$. Fix any $p\in \mathbb N$ and $r>0$. Then for $S,R$ in (\ref{R}), we have 
\begin{equation}\label{ONLYPROVE}
\begin{split}
\mathbb{E}\prod_{t=1}^p S_{\mathbf u_t \mathbf v_t} & = \sum_{0 \leq k \leq 4} A_{k} \mathbb{E}\left[(-\sqrt{\sigma_i}X_{i\mu})^{k}\right] \\
&+\sum_{5 \leq \vert \mathbf{k} \vert \leq {r}/{\phi}, \mathbf k\in \mathbb N^p } \mathcal{A}_{  \mathbf{k}  }\mathbb{E} \, \mathcal{P}_{\gamma,\mathbf{k} }\prod_{t=1}^p S_{\mathbf u_t \mathbf v_t}+O_\prec(N^{-r}), 
\end{split}
\end{equation}
where $A_k$, $0\le k \le 4$, depend only on $R$, $\mathcal A_{\mathbf k}$'s are independent of $(\mathbf u_t, \mathbf v_t)$, $1\leq t \leq s$, and we have the bound
\begin{equation}
\vert \mathcal{A}_{  \mathbf{k} } \vert \prec N^{-{\vert \mathbf{k} \vert  \phi} /{10}-2} . \label{INDEXBOUND}
\end{equation}
\end{lem}

It is obvious that a result similar to Lemma \ref{Greenfunctionrepresent} also holds for the product of $T$ entries. As in (\ref{ONLYPROVE}), we define the notation $\mathcal{A}^{\gamma,a}$, $a=0,1$ as follows:
\begin{equation} \label{greenpowers}
\begin{split}
&\mathbb{E}\prod_{t=1}^p S_{\mathbf u_t\mathbf v_t} = \sum_{0 \leq k \leq 4} A_{k} \mathbb{E}\left[(-\sqrt{\sigma_i}X_{i\mu})^{k}\right] \\
&\qquad \qquad \quad+\sum_{5 \leq \vert \mathbf{k} \vert \leq {r}/{\phi}, \mathbf k\in \mathbb N^p } \mathcal{A}_{  \mathbf{k}}^{\gamma, 0}\mathbb{E} \, \mathcal{P}_{\gamma,\mathbf{k} }\prod_{t=1}^p S_{\mathbf u_t\mathbf v_t}+O_\prec(N^{-r}),
\end{split}
\end{equation}
\begin{equation} \label{greenpowert}
\begin{split}
&\mathbb{E}\prod_{t=1}^p T_{\mathbf u_t\mathbf v_t} = \sum_{0 \leq k \leq 4} A_{k} \mathbb{E}\left[(-\sqrt{\sigma_i}\tilde{X}_{i\mu})^{k}\right]\\
&\qquad \qquad \quad +\sum_{5 \leq \vert \mathbf{k} \vert \leq {r}/{\phi}, \mathbf k\in \mathbb N^p } \mathcal{A}_{  \mathbf{k}}^{\gamma, 1}\mathbb{E} \, \mathcal{P}_{\gamma,\mathbf{k} }\prod_{t=1}^p T_{\mathbf u_t\mathbf v_t} +O_\prec(N^{-r}).
\end{split}
\end{equation}
Since $A_{k}$, $0 \leq k \leq 4$, depend only on $R$ and $X_{i \mu}$, $\tilde{X}_{i \mu}$ have the same first four moments, we get from (\ref{greenpowers}) and (\ref{greenpowert}) that
\begin{equation} \label{telescoping_2}
\begin{split}
& \mathbb{E}\prod_{t=1}^p G_{\mathbf u_t\mathbf v_t} -\mathbb{E}\prod_{t=1}^p \tilde G_{\mathbf u_t\mathbf v_t} =\sum_{\gamma=1}^{\gamma_{\max}} \left(\mathbb{E}\prod_{t=1}^p G^\gamma_{\mathbf u_t\mathbf v_t} -\mathbb{E}\prod_{t=1}^p G^{\gamma-1}_{\mathbf u_t\mathbf v_t}\right) \\
& = \sum_{\gamma=1}^{\gamma_{\max}} \sum_{\mathbf k\in \mathbb N^p}^{5 \leq \vert \mathbf{k} \vert \leq {r}/{\phi}}\left(\mathcal{A}^{\gamma,0}_{  \mathbf{k}  }\mathbb{E} \,\mathcal{P}_{\gamma,\mathbf{k} }\prod_{t=1}^p G^\gamma_{\mathbf u_t\mathbf v_t} -\mathcal{A}_{  \mathbf{k}}^{\gamma,1}\mathbb{E} \, \mathcal{P}_{\gamma,\mathbf{k}}\prod_{t=1}^p G^{\gamma-1}_{\mathbf u_t\mathbf v_t}\right) \\
&\quad +O_\prec(N^{-r+2}).
\end{split}
\end{equation}
where we abbreviate $G:=G(X,z)$ and $\tilde G:=G(\tilde X,z)$.

Applying (\ref{telescoping_2}) with $p=1$, $r=3$ and fixed unit vector $\mathbf u_t = \mathbf v_t = \mathbf v\in \mathbb C^{\mathcal I_1} \ \text{or} \ \mathbb C^{\mathcal I_2}$, we obtain that
\begin{equation} \label{teles_inequality}
\begin{split}
&\left| \mathbb{E} (G-\tilde G)_{\mathbf v\mathbf v} \right| \leq  \sum_{\gamma=1}^{\gamma_{\max}} \sum_{a=0,1} \sum_{5 \leq k \leq {3}/{\phi}} \vert \mathcal{A}_{k}^{\gamma, a}  \vert \left\vert  \mathbb{E} \mathcal{P}_{\gamma,k} G^{\gamma - a}_{\mathbf v\mathbf v} \right\vert+O_\prec(N^{-1}).
\end{split}
\end{equation}
Using (\ref{5BOUND1}) and (\ref{INDEXBOUND}), we can bound the sum in (\ref{teles_inequality}) by 
\begin{equation}
 \sum_{\gamma=1}^{\gamma_{\max}} \sum_{a=0,1}  \sum_{5 \leq k \leq 3/{\phi}} \vert \mathcal{A}_{k}^{\gamma, a} \vert  \left\vert \mathbb{E} \mathcal{P}_{\gamma,k} G^{\gamma - a}_{\mathbf v\mathbf v} \right\vert  \prec \sum_{5 \leq k \leq 3/{\phi}} N^{-{k \phi}/{10}} \prec N^{-\phi/2}. 
\end{equation}
(Here we need to apply the Lemma 2.2 (iii) of the main article, and hence need a second moment bound for $|\mathcal{P}_{\gamma,{k}}G^{\gamma - a}_{\mathbf v\mathbf v} |$. This follows easily from (\ref{5BOUNDT}).) Recall that $\mathcal{P}_{\gamma,k}G^{\gamma-a}_{\mathbf v\mathbf v}$ is also a sum of the products of $G$ entries. Then applying (\ref{telescoping_2}) to $|\mathbb{E} \mathcal{P}_{\gamma,k} G^{\gamma-a}_{\mathbf v\mathbf v}|$ and replacing $\gamma_{\max}$ with $\gamma-a$, we obtain that
\begin{equation}\label{teles_inequality1}
\begin{split}
& \left\vert \mathbb{E} \mathcal P_{\gamma, k} G^{\gamma-a}_{\mathbf v\mathbf v} \right \vert \leq \left\vert \mathbb{E} \mathcal P_{\gamma,k} G^{0}_{\mathbf v\mathbf v} \right \vert  \\
& + \sum_{\gamma^{\prime}=1}^{\gamma-a} \sum_{a^{\prime}=0,1} \sum_{\mathbf k^{\prime}\in \mathbb N^{1+k}}^{5 \leq \vert \mathbf{k}^{\prime} \vert \leq 3/{\phi}} \left\vert \mathcal{A}_{\mathbf{k}^{\prime}}^{\gamma^{\prime}, a^{\prime}} \right\vert  \left\vert \mathbb{E} \mathcal{P}_{\gamma^{\prime},\mathbf{k}^{\prime}}\mathcal{P}_{\gamma,{k}} G^{\gamma^{\prime}-a'}_{\mathbf v \mathbf v} \right\vert +O_\prec(N^{-1}).
\end{split}
\end{equation}
Together with (\ref{teles_inequality}) and (\ref{INDEXBOUND}), we get that
\begin{align*}
& \left\vert \mathbb{E}(G - \tilde G)_{\mathbf v\mathbf v} \right\vert \leq \sum_{\gamma,a} \sum_{k} \left\vert \mathcal{A}_{k}^{\gamma, a} \right\vert  \left\vert \mathbb{E} \mathcal{P}_{\gamma,k}G^{0}_{\mathbf v\mathbf v}\right \vert  \nonumber \\
& + \sum_{\gamma, \gamma^{\prime}} \sum_{a, a^{\prime}} \sum_{k, \mathbf{k}^{\prime}} \left| \mathcal{A}_{k}^{\gamma,a} \mathcal{A}_{\mathbf{k}^{\prime}}^{\gamma^{\prime},a^{\prime}} \right| \left\vert \mathbb{E} \mathcal{P}_{\gamma^{\prime},\mathbf{k}^{\prime}}\mathcal{P}_{\gamma,k} G^{\gamma^{\prime}-a^{\prime}}_{\mathbf v\mathbf v} \right\vert+O_\prec(N^{-1}).
\end{align*}
Again using (\ref{5BOUND1}) and (\ref{INDEXBOUND}), we obtain that
\begin{equation}
\sum_{\gamma, \gamma^{\prime}} \sum_{a, a^{\prime}} \sum_{k, \mathbf{k}^{\prime}} \left\vert \mathcal{A}_{k}^{\gamma,a} \mathcal{A}_{\mathbf{k}^{\prime}}^{\gamma^{\prime},a^{\prime}} \right\vert  \left| \mathbb{E} \mathcal{P}_{\gamma^{\prime},\mathbf{k}^{\prime}}\mathcal{P}_{\gamma,k} G^{\gamma^{\prime}-a^{\prime}}_{\mathbf v\mathbf v} \right| \prec N^{-\phi}, \label{process_end}
\end{equation}
where we used that $k+|\mathbf k'| \ge 10$. Repeating this process, we can make the remainder term smaller and smaller. At the end, we obtain that
\begin{equation*} 
\begin{split}
\left\vert \mathbb{E}(G - \tilde G)_{\mathbf v\mathbf v} \right\vert & \leq \sum_{n=0}^{{2}/{\phi}} \sum_{\gamma_1, \cdots, \gamma_n} \sum_{a_1,\cdots, a_n} \sum_{\mathbf{k}_1,\cdots, \mathbf{k}_n} \Big\vert \prod_{j} \mathcal{A}_{\mathbf{k}_j}^{\gamma_j,a_j} \Big\vert \left\vert \mathbb{E}\mathcal{P}_{\gamma_n, \mathbf{k}_n} \cdots \mathcal{P}_{\gamma_1, \mathbf{k}_1} G^{0}_{\mathbf v\mathbf v} \right\vert \\
& +O_\prec(N^{-1}) , 
\end{split}
\end{equation*} 
where 
\begin{equation} \label{indexassumption}
\mathbf{k}_1 \in \mathbb{N}^1, \  \mathbf{k}_2 \in \mathbb{N}^{1+ \vert \mathbf{k}_1 \vert}, \ \mathbf{k}_3 \in \mathbb{N}^{1+ \vert \mathbf{k}_1 \vert+ \vert \mathbf{k}_2 \vert}, \ \text{etc.},  \ \text{ and } \ 5 \leq \vert \mathbf{k}_i \vert \leq \frac{3}{\phi}.
\end{equation}
Using (\ref{INDEXBOUND}), we obtain that
\begin{equation} \label{teles_inequality3}
\begin{split}
&\left\vert \mathbb{E}(G - \tilde G)_{\mathbf v\mathbf v} \right\vert  \\
& \prec \max_{\mathbf{k}, n}(N^{-2})^n(N^{-\frac{\phi}{10}})^{\sum_{i}\vert \mathbf{k}_i \vert}\sum_{\gamma_1, \cdots, \gamma_n}  \left\vert \mathbb{E}\mathcal{P}_{\gamma_n, \mathbf{k}_n} \cdots \mathcal{P}_{\gamma_1, \mathbf{k}_1} G^{0}_{\mathbf v\mathbf v} \right\vert +N^{-1} .
\end{split}
\end{equation}  

Now we complete the proof of (\ref{Eaniso_law}) using the estimate (\ref{teles_inequality3}) and the bound (\ref{Eaniso_law}) for $G^0=\tilde G$. We see that it suffices to control the terms
\begin{align}\label{average_comparison1}
{\mathcal{P}}_{\gamma_n, \mathbf{k}_n  } \cdots {\mathcal{P}}_{\gamma_1, \mathbf{k}_1}\tilde{G}_{\mathbf v\mathbf v}
\end{align}
for $\mathbf k_1,\ldots, \mathbf k_n$ satisfying (\ref{indexassumption}).
By definition of ${\mathcal P}$, (\ref{average_comparison1}) is a sum of at most $C^{\sum \vert \mathbf{k}_i \vert}$ products of $G_{\mathbf v b}$, $G_{b\mathbf v}$ and $G_{ab}$ entries, where the total number of $G$ entries in each product is at most $\sum {\vert \mathbf{k}_i \vert}+1=O(\phi^{-2})$. Due to the deterministic bound (\ref{5BOUNDT}), (\ref{average_comparison1}) is always bounded by $N^{O(\phi^{-2})}$, and hence Lemma 2.2 (iii) of the main article can be applied. 

For each product in (\ref{average_comparison1}), there are two $\mathbf v$'s in the indices of $G$. These two $\mathbf v$'s appear as $G_{\mathbf v a} G_{b \mathbf v}$ in the product, where $a, b$ come from some $\gamma_k$ and $\gamma_{l}$ ($1 \leq k, l \leq n$) via ${\mathcal{P}}$. 
Thus after taking the average $N^{-2}\sum_{\gamma_k}$ and $N^{-2}\sum_{\gamma_l}$, the term $G_{\mathbf v a} G_{b \mathbf v}$ contributes a factor $O_\prec((N\eta)^{-1})$ by (\ref{sum_bound}) and Cauchy-Schwarz inequality. For all other $G$ factors in the product with no $\mathbf v$'s, we control them by $O_\prec(1)$ using (\ref{5BOUND1}). Thus for any fixed $\gamma_1,\ldots,\gamma_n$, $\mathbf k_1,\ldots, \mathbf k_n$, we have proved that
\begin{equation*}
N^{-2n}\sum_{\gamma_1, \cdots, \gamma_n}  \left\vert \mathbb{E} {\mathcal{P}}_{\gamma_n, \mathbf{k}_n  } \cdots {\mathcal{P}}_{\gamma_1, \mathbf{k}_1} \tilde{G}_{\mathbf v\mathbf v}\right\vert \prec \frac{1}{N\eta} . 
\end{equation*}
Then using (\ref{teles_inequality3}) and (\ref{Eaniso_law}) for $\tilde G$, we obtain that
\begin{equation*}
\vert \mathbb{E} G_{\mathbf v\mathbf v} - \Pi_{\mathbf v\mathbf v} | \prec \vert \mathbb{E} \tilde G_{\mathbf v\mathbf v} - \Pi_{\mathbf v\mathbf v}| + \frac{1}{N\eta} \prec \frac{1}{N\eta}.
\end{equation*}
This then concludes the proof of Theorem \ref{thm_large} by polarization. 
\end{proof}


\end{document}